\documentclass[11pt,reqno]{amsart}
\usepackage{amsmath,latexsym,amssymb,amsfonts}
\usepackage{amscd,amssymb,epsfig}
\usepackage{graphicx}
\usepackage[pagebackref, colorlinks = true, linkcolor = blue, urlcolor  = blue, citecolor = red]{hyperref}

\usepackage[margin=1in]{geometry}

\newtheorem{theorem}{Theorem}[section]
\newtheorem{lemma}[theorem]{Lemma}

\newtheorem{prop}[theorem]{Proposition}
\newtheorem{proposition}[theorem]{Proposition}
\newtheorem{corollary}[theorem]{Corollary}
\newtheorem{example}[theorem]{Example}

\newtheorem{definition}[theorem]{Definition}
\newtheorem{remark}[theorem]{Remark}

\newcommand\NCP{\mathcal{NC}}
\newcommand\ee{\varepsilon}

\newcommand\EE{\mathbb{E}}
\newcommand\FF{\mathbb{F}}
\newcommand\CC{\mathbb{C}}
\newcommand\RR{\mathbb{R}}
\newcommand\cA{\mathcal{A}}
\newcommand\cD{\mathcal{D}}
\newcommand\cB{\mathcal{B}}
\newcommand\cR{\mathcal{R}}

\newcommand\SP{\mathcal{P}}

\title[On the asymptotic distribution of block-modified random matrices]{On the asymptotic distribution of block-modified\\random matrices}

\author[O. Arizmendi]{Octavio Arizmendi}
\address{Department of Probability and Statistics, CIMAT, Guanajuato, Mexico }
\email{octavius@cimat.mx }

\author[I. Nechita]{Ion Nechita}
\address{Zentrum Mathematik, M5\\ Technische Universit\"at M\"unchen\\ Boltzmannstrasse 3, 85748 Garching\\ Germany; CNRS, Laboratoire de Physique Th\'{e}orique, IRSAMC\\ Universit\'{e} de Toulouse, UPS\\ F-31062 Toulouse, France}
\email{nechita@irsamc.ups-tlse.fr}

\author[C. Vargas]{Carlos Vargas}
\address{Technische Universit\"{a}t Graz, Department of Mathematical Structure Theory , Steyrergasse 30 III, 8010-Graz, Austria }
\email{obieta@math.tugraz.at}

\date{\today}
\begin{document}

\begin{abstract}
We study random matrices acting on tensor product spaces which have been transformed by a linear block operation. Using operator-valued free probability theory, under some mild assumptions on the linear map acting on the blocks, we compute the asymptotic eigenvalue distribution of the modified matrices in terms of the initial asymptotic distribution. Moreover, using recent results on operator-valued subordination, we present an algorithm that computes, numerically but in full generality, the limiting eigenvalue distribution of the modified matrices. Our analytical results cover many cases of interest in quantum information theory: we unify some known results and we obtain new distributions and various generalizations.

\end{abstract}

\maketitle

\tableofcontents

\section{Introduction}

We continue the work of Banica and second named author \cite{BaNe12b} on finding the asymptotic distribution (as $d\to\infty$) of $dm\times dm$ random matrices which have been block-wise modified by some fixed self-adjoint linear transformation $\varphi: M_m(\CC)\to  M_n(\CC)$. More concretely, we consider a self-adjoint $dm\times dm$ unitarily invariant random matrix $X_d$ and a linear map $\varphi: M_m(\CC)\to M_n(\CC)$. Our goal is to understand the asymptotic eigenvalue distribution of the block-wise modified random matrix $$X_d^{\varphi}:=[\mathrm{id}_d\otimes\varphi](X_d) \in M_d(\mathbb C) \otimes M_n(\mathbb C).$$

The motivation for this line of research comes from quantum information theory. The problem of deciding whether a quantum state $\rho \in M_n(\mathbb C) \otimes M_m(\mathbb C)$ is \emph{entangled} has been shown to be equivalent to the fact that, for all positivity preserving maps $\varphi:M_m(\CC)\to  M_n(\CC)$, the modified state $\hat \rho = [\mathrm{id} \otimes \varphi](\rho)$ is positive semi-definite \cite{horodecki1996separability}. It is thus important to understand how the spectrum of generic matrices behaves under linear block-transformations. Aubrun \cite{Au12} studied the positivity of partially transposed random quantum states: this corresponds to the transposition map acting on the blocks of a Wishart element. He computed the range of parameters of the Wishart distribution for which the partial transposition is still positive. Similar computations, in different asymptotic regimes, or for different maps acting on the blocks, have been performed in \cite{BaNe12,fukuda2013partial,JLN14,JLN15}. In this paper, we unify these results by showing that operator-valued free probability theory provides the right framework to study such questions.

Our new approach relies on the theory of operator-valued free probability, developed by Voiculescu \cite{Vo85,Vo95} and later by Speicher \cite{Sp98}. We find a general numerical solution using the tools from \cite{BSTV14} which allow to approximate operator-valued free multiplicative convolutions. For certain cases, we are able to get explicit formulas for the distributions by using the operator-valued $S$-transform \cite{Dy06} over some suitable commutative algebras.

For the case $m=n$, the main observation is that we may write $$X_d^{\varphi}=\sum^m_{i,j,k,l=1}\alpha^{ij}_{kl}  (I_d  \otimes E_{ij})  X (I_d  \otimes E_{kl}),$$
where $E_{ij}\in M_m(\CC)$ are the matrix units. The joint non-commutative distribution (see Section \ref{OFP} for the main definitions) of the matrices $(I_d  \otimes E_{ij})^m_{i,j=1}$ (with respect to the state $\tau_{dm}:=\EE\circ \frac{1}{dm}\mathrm{Tr}$) does not change with $d$. Thus, if the random matrix $X$ is unitarily invariant (for example, if $X$ is a Wishart, Wigner, or randomly rotated matrix), we are allowed to use Voiculescu's asymptotic freeness results \cite{Vo91} to compute the asymptotic joint moments of the non-commutative random variables $X$, $(I_d  \otimes E_{ij})^m_{i,j=1}$ as $d\to \infty$. These asymptotic joint moments are given by replacing $(I_d  \otimes E_{jl})^m_{i,j=1}$ and $X$ by an abstract collection of operators $(e_{ij})_{i,j\leq m}$, $x$ in some non-commutative probability space $(\cA,\tau)$.

The limiting distribution of $X_d^{\varphi}$ is the same as the distribution of the element
$$x^{\varphi}:=\sum^m_{i,j,k,l=1}\alpha^{ij}_{kl}  e_{ij}xe_{kl}.$$
Thus, the problem is reduced to study the distribution of such elements for which the machinery of operator-valued free probability is available. Note that the general case $m\neq n$ is solved similarly with the aid of rectangular spaces, using the theory developed by Benaych-Georges \cite{benaych2007infinitely,BG09,BG09b}.

The paper is organized as follows. We start with a short overview of operator-valued free probability, the framework in which we state all our results. Section \ref{BLT} contains the numerical solution to the problem, in full generality. We then move to the theoretical study: in Section \ref{sec:freeness} we prove a freeness result, under some assumptions on the eigenvectors of the Choi matrix $C$ of the linear map $\varphi$ acting on the blocks. The exact distribution of the modified matrix is computed in Section \ref{sec:distribution}, under more stringent assumptions on $C$. Sections \ref{sec:examples-distributions} and \ref{sec:examples-maps} contain some important examples, both of distributions and of maps $\varphi$; most of the examples are of interest in Quantum Information Theory.

\section{Operator-valued free probability}\label{OFP}

Free probability was introduced by Voiculescu \cite{Vo85} in order to tackle some problems in operator algebras. Later, the theory detached from its operator-algebraic origins, and grew into a branch of non-commutative probability, where the concept of \emph{freeness} replaces the notion of independence from classical probability.  The main object of study are (non-commutative) random variables $a_1,\dots,a_k$ in a $C^*$-algebra $\cA$ and their joint distribution with respect to a given state (i.e. a unital positive linear functional $\tau:\cA\to\CC$). The pair $(\cA,\tau)$ is called a non-commutative probability space.

By the joint distribution of an ordered tuple $a=(a_1,\dots,a_k)$ of random variables in $(\cA,\tau)$, we mean the collection of maps $\Phi(a)=\bigcup_{r\geq 0}\Phi_r^a,$ where  $$\Phi_r^{a}:=\Phi_r=\{(i_1,\dots,i_r)\mapsto \tau(a_{i_1}\dots a_{i_r}): i_1,\dots,i_r\leq k\}$$ are the $r$-th order mixed moments of $(a_1,\dots,a_k)$.

The state $\tau$ should be thought as playing the role of the expectation functional in classical probability. Motivated by operator-algebraic constructions, Voiculescu defined freeness in \cite{Vo85}, as a new, non-commutative notion of independence. In \cite{Vo91}, free random variables where constructed as limits of certain large random matrices, establishing a connection between free probability theory and random matrix theory.

The idea of operator-valued free probability (\cite{Vo95}) is to generalize free probability, by replacing the scalar valued linear form $\tau:\cA\to\CC$ by a conditional expectation $\EE:\cA\to\cB$ onto a larger sub-algebra $\CC\subseteq\cB\subseteq\cA$.
This leads to a broader definition of freeness, which occurs in more general situations of random matrix theory, as observed already by Shlyakthenko in \cite{Sh96}.

Many aspects of the theory of (scalar-valued) free probability can be lifted to the operator-valued level. The combinatorics of operator-valued free probability (see \cite{Sp98}) remains the same provided that the nesting structure of non-crossing partitions is respected while operating.
This will make cumulants particularly handy when finding good candidates for the smallest sub-algebra $\cB \subset \cA$ over which two given random variables are free.

The question of finding explicit formulas for operator-valued distributions is closely related to the possibility of finding realizations of the distribution in terms of operators which are free over a commutative algebra. We use the criteria in \cite{NSS02} to give sufficient conditions (in terms of the Choi matrix of the map $\varphi$) for such a realization to exist.

For the cases where $\cB$ is non-commutative it is extremely complicated to obtain exact distributions. On the other hand, numerical algorithms which rely on subordination, such as the ones in \cite{BeBe07} for the computation of the additive and multiplicative free convolutions admit very effective generalizations to the operator-valued level (see \cite{BSTV14,BMS13}). We will use these to obtain a general numerical solution to our problem.

\subsection{Basic definitions}

We gather here some basic definitions that will be needed in what follows, and we establish some notation.

\begin{definition}
(1) Let $\mathcal{A}$ be a unital $*$-algebra and let $\CC\subseteq\mathcal{B}\subseteq\cA$ be a $*$-subalgebra.
A $\mathcal{B}${-probability space} is a pair $\left(\mathcal{A},\EE\right)$, where  $\EE:\mathcal{A}\to\mathcal{B}$ is a {conditional expectation}, that is, a linear map satisfying:
\begin{eqnarray*}
\EE\left(bab'\right) &=& b\EE(a)b', \qquad \forall b,b'\in\mathcal{B},a\in\mathcal{A}\\
\EE\left(1\right) &=& 1.
\end{eqnarray*}

(2) Let $(\cA,\EE)$ be a $\cB$-probability space and let $\bar{a}:=a-\EE(a)1_{\cA}$ for any $a\in\cA$.
The $*$-subalgebras $\cB\subseteq A_1,\dots ,A_k\subseteq \cA$ are $\cB${-free} (or {free over} $\cB$, or {free with amalgamation over} $\cB$) ({with respect to} $\EE$) iff
\begin{equation}\label{opfreeness}
\EE(\bar{a_1}\bar{a_2}\cdots \bar{a_r})=0,
\end{equation}
for all $r\geq 1$ and all tuples $a_1,\dots,a_r\in \cA$ such that $a_i\in A_{j(i)}$ with $j(1)\neq j(2)\neq \dots \neq j(r)$.

(3) Subsets $S_1,\dots ,S_k\subset \cA$ are $\cB${-free} if so are the $*$-subalgebras $\langle S_1,\cB\rangle,\dots,\langle S_k,\cB\rangle$.
\end{definition}

Similar to independence, freeness allows to compute mixed moments free random variables in terms of their individual moments.

The most basic examples of non-commutative probability spaces are the space of deterministic matrices $(M_n(\CC),\frac{1}{n}\mathrm{Tr})$ and the space of classical random variables $(\cA,\EE)$. These serve as building blocks for more general (operator-valued) non-commutative probability spaces.

\begin{example}[Rectangular probability spaces]
Let $\cA$ be a unital $C^*$-algebra and let $\tau:\cA\to \CC$ be a state. Let $p_1,\dots,p_k$ be pairwise orthogonal projections with $1=p_1+\dots+p_k$ and $\tau(p_i) >0$ for all $i$. There exist a unique conditional expectation $\EE:\cA\to \langle p_1,\dots,p_k\rangle$ compatible with $\tau$ in the sense that $\tau\circ \EE=\tau$. The conditional expectation is explicitly given by $$\EE(a)=\sum_{i\leq k}\tau(p_i)^{-1}\tau(p_iap_i).$$
\end{example}

\begin{definition} \label{convdist}
Let $(\cA_N,\tau_N)$, $N\geq 1$, and $(\cA,\tau)$ be $\CC$-probability spaces and let $(a_1^{(N)},\dots,a_k^{(N)})\in \cA_N^k$, $(a_1,\dots,a_k)\in \cA^k$ be such that $$\lim_{N\to\infty}\tau_N((a_{i_1}^{(N)})\cdots (a_{i_r}^{(N)}))=\tau(a_{i_1}\cdots a_{i_r}),$$ for all $r\geq 1$, $1\leq i_1,\dots, i_m\leq k$ Then we say that $(a_1^{(N)},\dots,a_k^{(N)})$ converges in distribution to $(a_1,\dots,a_k)$ and we write $(a_1^{(N)},\dots,a_k^{(N)})\to(a_1,\dots,a_k)$.
\end{definition}

\begin{remark}\label{freemunits}
(1) The joint distribution (with respect to $\tau_{md}$) of the blown-up matrix units  $(I_d\otimes E_{ij} )_{i,j\leq m}$ is independent of $d$. We may consider an abstract ``limiting'' non-commutative probability space $(\cA,\tau)$ such that $M_m(\CC)\subseteq \cA$ and $\tau|_{M_m(\CC)}=\frac{1}{m}\mathrm{Tr}$. If $(e_{ij})_{i,j\leq m}$ denote the matrix units in $M_m(\CC)\subseteq \cA$ we have that
$$(E_{11} \otimes I_d,E_{12}\otimes I_d,\dots ,E_{mm}\otimes I_d)\to (e_{11},e_{12},\dots ,e_{mm}),$$
with joint distribution determined by the usual rules: $$e_{ij}e_{kl}=\delta_{jk}e_{il}, \quad \tau(e_{ij})=n^{-1}\delta_{ij}, \quad \sum_{i=1}^me_{ii}=1, \quad e_{ij}^*=e_{ji}.$$

(2) If $X_d$ is an $md\times md$ Wigner, Wishart or randomly rotated self-adjoint matrix with $\lim_{d\to \infty}\tau_{dn}(X_d^k)=\tau(x^k)$ for some fixed operator $x$ in a non-commutative probability space $(\cA, \tau)$, then by \cite{Vo91}, we have that
 $$(X, E_{11} \otimes I_d,E_{12}\otimes I_d,\dots ,E_{mm}\otimes I_d)\to (x, e_{11},e_{12},\dots ,e_{mm}),$$
where $x$ and $(e_{ij})_{i,j\leq m}$ are free. This determines completely the joint distribution of these variables.

(3) Let us now replace $m$ by $m+n$ in the first part of this remark so that our limiting non-commutative probability space $(\cA,\tau)$ contains now a copy of $(M_{(m+n)}(\CC),\frac{1}{m+n}\mathrm{Tr})$. If $X_d$ is still an $md\times md$ random matrix, as in part (2) (completed with zeros to an $d(m+n)\times d(m+n)$ matrix), then we have again, that
$$(X_d, E_{11} \otimes I_d,E_{12}\otimes I_d,\dots ,E_{(m+n)(m+n)}\otimes I_d)\to (x,e_{11},e_{12},\dots ,e_{(m+n)(m+n)}).$$
It is quite easy to see that $x$ and $((e_{ij})_{i,j\leq m+n})$ are no longer $\CC$-free. Indeed, the joint distribution is aware of the orthogonality relations between $X_d$ and the different matrix units, and we have, for example, that $$0=\tau(x^2e_{(m+1)(m+1)})\neq\tau(x^2)\tau(e_{(m+1)(m+1)}),$$ and hence freeness is broken unless $x=0$.

As noticed in \cite{BG09b}, this problem gets fixed by considering distributions with values on the commutative algebra generated by the projections
$$P_{d,m}:=\sum_{i\leq m} I_d\otimes E_{ii}, \quad P_{d,n}:=\sum_{m< i \leq m+n} I_d\otimes  E_{ii},$$
which in turn converge in distribution to the orthogonal projections $$p_m := \sum_{i=1}^m e_{ii} \quad \text{ and } \quad p_n := \sum_{i=m+1}^{m+n} e_{ii},$$ with non zero traces $\tau(p_m) = m/(m+n)$, $\tau(p_n) = n/(m+n)$. Then $x$ and $((e_{ij})_{i,j\leq m+n})$ are $\langle p_m,p_n\rangle$-free.
\end{remark}

\begin{example}[Matrix-valued probability spaces]
Let $\cA$ be a unital $C^*$-algebra and let $\tau:\cA\to \CC$ be a state. Consider the algebra $M_n(\cA)\cong M_n(\CC)\otimes\cA$ of $n\times n$ matrices with entries in $\cA$.
The maps $$\EE_3:(a_{ij})_{ij}\mapsto(\tau(a_{ij}))_{ij}\in M_n(\CC),$$ $$\EE_2:(a_{ij})_{ij}\mapsto(\delta_{ij}\tau(a_{ij}))_{ij}\in D_n(\CC),$$ and $$\EE_1:(a_{ij})_{ij}\mapsto\sum_{i=1}^n\frac{1}{n}\tau(a_{ii})I_n\in \CC\cdot I_n$$ are respectively, conditional expectations onto the algebras $M_n(\CC)\supset D_n(\CC)\supset \CC\cdot I_n$ of constant matrices, diagonal matrices and multiples of the identity.
\end{example}

If $A_1,\dots ,A_k$ are free in $(\cA,\tau)$, then the algebras $M_n(A_1),\dots,M_n(A_k)$ of matrices with entries in $A_1,\dots,A_k$ respectively are in general not free over $\CC$ (with respect to $\EE_1$).
They are, however $M_n(\CC)$-free (with respect to $\EE_3$).
Below is a slightly more general assertion of this simple but fundamental result.

\begin{prop}\label{matrixfreeness}
Let $(\cA,\EE)$ be a $\cB$-probability space, and consider the $M_n(\cB)$-valued probability space $(M_n(\CC)\otimes\cA,id\otimes\EE)$.
If $A_1,\dots,A_k\subseteq\cA$ are $\cB$-free, then $(M_n(\CC)\otimes A_1),\dots,(M_n(\CC)\otimes A_k)\subseteq(M_n(\CC)\otimes\cA)$ are $(M_n(\cB))$-free.
\end{prop}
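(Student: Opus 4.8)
The plan is to verify the defining vanishing condition \eqref{opfreeness} for $M_n(\cB)$-freeness directly, expanding every element in terms of the matrix units $E_{st}\in M_n(\CC)$ and thereby reducing, entry by entry, to the assumed $\cB$-freeness of $A_1,\dots,A_k$. First I would fix $r\geq 1$ and an alternating tuple $T_1,\dots,T_r$ with $T_\ell\in M_n(\CC)\otimes A_{j(\ell)}$, $j(1)\neq j(2)\neq\cdots\neq j(r)$, each of them centered in the sense that $(\id\otimes\EE)(T_\ell)=0$. Writing $T_\ell=\sum_{s,t=1}^n E_{st}\otimes a^{(\ell)}_{st}$ with $a^{(\ell)}_{st}\in A_{j(\ell)}$ (a general element of the finite-dimensional tensor product $M_n(\CC)\otimes\cA$ decomposes this way), the centering condition reads $\sum_{s,t}E_{st}\otimes\EE(a^{(\ell)}_{st})=0$, which is equivalent to $\EE(a^{(\ell)}_{st})=0$ for every $s,t$; that is, every scalar entry $a^{(\ell)}_{st}$ is centered with respect to $\EE$.

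Next I would multiply out, using $E_{s_1 t_1}E_{s_2 t_2}\cdots E_{s_r t_r}=\delta_{t_1 s_2}\delta_{t_2 s_3}\cdots\delta_{t_{r-1}s_r}E_{s_1 t_r}$, to obtain that the $(i,j)$ entry of $T_1\cdots T_r\in M_n(\cA)$ equals
$$\sum_{s_2,\dots,s_r=1}^n a^{(1)}_{i s_2}a^{(2)}_{s_2 s_3}\cdots a^{(r-1)}_{s_{r-1}s_r}a^{(r)}_{s_r j}.$$
Each summand is an alternating product of $\EE$-centered elements drawn successively from $A_{j(1)},A_{j(2)},\dots,A_{j(r)}$, with the very same index constraint $j(1)\neq j(2)\neq\cdots\neq j(r)$; since these subalgebras are $\cB$-free, $\EE$ annihilates each such product. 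Hence $\EE$ of the $(i,j)$ entry vanishes for all $i,j$, i.e.\ $(\id\otimes\EE)(T_1\cdots T_r)=0$. This is precisely \eqref{opfreeness} for the subalgebras $M_n(\CC)\otimes A_1,\dots,M_n(\CC)\otimes A_k$ over $M_n(\cB)$. I would also record the consistency of the setup: since $\cB\subseteq A_\ell$, we have $M_n(\cB)=M_n(\CC)\otimes\cB\subseteq M_n(\CC)\otimes A_\ell$, so the inclusions required by the definition of $M_n(\cB)$-freeness indeed hold, and $\id\otimes\EE$ is genuinely a conditional expectation onto $M_n(\cB)$.

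There is no serious obstacle in this argument — it is essentially a bookkeeping computation with matrix units. The only points requiring a little care are (i) the equivalence between centering of $T_\ell$ with respect to $\id\otimes\EE$ and entrywise centering of the $a^{(\ell)}_{st}$ with respect to $\EE$, and (ii) the observation that the alternation constraint on the indices $j(1),\dots,j(r)$ passes verbatim to each scalar summand, so that the $\cB$-freeness hypothesis applies without modification. One could alternatively phrase the same computation more compactly by exploiting that multiplication in $M_n(\cA)$ respects the $M_n(\CC)$-bimodule structure and that $\id\otimes\EE$ is $M_n(\CC)$-bilinear, but the explicit entrywise argument above is the most transparent and makes the reduction to the scalar-valued freeness hypothesis completely manifest.
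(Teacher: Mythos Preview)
Your proof is correct and follows essentially the same approach as the paper's: both verify the defining vanishing condition by expanding in matrix units, observing that centering with respect to $\id\otimes\EE$ amounts to entrywise centering with respect to $\EE$, and then noting that each entry of the product is a sum of alternating products of $\EE$-centered elements, which vanish by the assumed $\cB$-freeness. The paper's version is just more compressed.
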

\begin{proof}
Let $a^{(1)},\dots,a^{(m)}\in M_n(\CC)\otimes\cA$ be such that $a^{(i)}\in M_n(\CC)\otimes A_{j(i)}$ with $j(1)\neq j(2)\neq \dots \neq j(m)$.
Observe that $$\overline{a^{(i)}}=a^{(i)}-(\mathrm{id}\otimes \EE)(a^{(i)})=((a^{(i)}_{rs})-\EE(a^{(i)}_{rs}))_{rs\leq n}=(\overline{a^{(i)}_{rs}})_{rs\leq n}.$$
Hence
\begin{equation}
(id\otimes\EE)((\overline{a^{(1)}})\cdots(\overline{a^{(m)}}))=\sum_{i_0,\dots,i_m=1}^n(\EE((\overline{a^{(1)}_{i_0i_1}})(\overline{a^{(2)}_{i_1i_2}})\cdots(\overline{a^{(m)}_{i_{m-1}i_m}})))_{i_0i_m}=0.
\end{equation}
\end{proof}

\subsection{Combinatorics and cumulants}

A \emph{partition} of a set is a decomposition into disjoint subsets,
called \emph{blocks}.
The set of partitions of the set $[n]:=\{1,\ldots,n\}$ is denoted by
$\SP(n)$. Any partition defines an equivalence relation
on $[n]$ and vice versa: given  $\pi\in \SP(n)$,
$i \sim_\pi j$ holds if and only if there is a block $V \in \pi$
such that $i,j \in V$.

A partition $\pi\in\SP(n)$ is \emph{non-crossing} if there is no
quadruple of elements $1\leq i<j<k<l\leq n$ such that $i\sim_\pi k$, $j\sim_\pi l$ and
$i\not\sim_\pi j$. The non-crossing partitions of order $n$ form a
sub-poset of $\SP(n)$ which we denote by $\NCP(n)$.

For $n\in\mathbb{N}$, a $\mathbb{C}$-multi-linear map $f:\mathcal{A}^{n}\to\mathcal{B}$ is called $\mathcal{B}$\textit{-balanced} if it satisfies the $\mathcal{B}$-bilinearity conditions, that for all $b,b'\in\mathcal{B}$, $a_{1},\dots,a_{n}\in\mathcal{A}$, and for all $r=1,\dots,n-1$
\begin{eqnarray*}
f\left(ba_{1},\dots,a_{n}b'\right) &=& bf\left(a_{1},\dots,a_{n}\right)b'\\
f\left(a_{1},\dots,a_{r}b,a_{r+1},\dots,a_{n}\right) &=& f\left(a_{1},\dots,a_{r},ba_{r+1}\dots,a_{n}\right)
\end{eqnarray*}

A collection of $\mathcal{B}$-balanced maps $\left(f_{\pi}\right)_{\pi\in \NCP}$ is said to be {multiplicative} with respect to the lattice of non-crossing partitions if, for every $\pi\in \NCP$, $f_{\pi}$ is computed using the block structure of $\pi$ in the following way:

\begin{itemize}
 \item If $\pi=\hat{1}_{n}\in \NCP\left(n\right)$, we just write $f_{n}:=f_{\pi}$.

 \item If $\hat{1}_{n}\neq\pi=\left\{ V_{1},\dots,V_{k}\right\} \in \NCP\left(n\right),$ then by a known characterization of $\NCP$, there exists a block $V_{r}=\left\{ s+1,\dots,s+l\right\} $ containing consecutive elements. For any such a block we must have
\begin{equation*}
f_{\pi}\left(a_{1},\dots,a_{n}\right)=f_{\pi\backslash V_{r}}\left(a_{1},\dots,a_{s}f_{l}\left(a_{s+1},\dots,a_{s+l}\right),a_{s+l+1},\dots,a_{n}\right),
\end{equation*}
where $\pi\backslash V_{r}\in \NCP\left(n-l\right)$ is the partition obtained from removing the block $V_{r}$.

\end{itemize}

We observe that a multiplicative family $\left(f_{\pi}\right)_{\pi\in \NCP}$ is entirely determined by $\left(f_{n}\right)_{n\in\mathbb{N}}$. On the other hand, every collection $\left(f_{n}\right)_{n\in\mathbb{N}}$ of $\mathcal{B}$-balanced maps can be extended uniquely to a multiplicative family $\left(f_{\pi}\right)_{\pi\in \NCP}$.

The {operator-valued free cumulants} $\left(R^{\cB}_{\pi}\right)_{\pi\in \NCP}$ are indirectly and inductively defined as the unique multiplicative family of $\mathcal{B}$-balanced maps satisfying the (operator-valued) moment-cumulant formulas
\begin{equation*}
\EE\left(a_{1}\dots a_{n}\right)=\sum_{\pi\in \NCP\left(n\right)}R^{\cB}_{\pi}\left(a_{1},\dots,a_{n}\right)
\end{equation*}

By the {cumulants of a tuple} $a_{1},\dots,a_{k}\in\mathcal{A}$, we mean the collection of all cumulant maps
\begin{equation*}
\begin{array}{cccc}
R_{i_{1},\dots,i_{n}}^{\mathcal{B};a_{1},\dots,a_{k}}: & \mathcal{B}^{n-1} & \to & \mathcal{B},\\
 & \left(b_{1},\dots,b_{n-1}\right) & \mapsto & R^{\cB}_{n}\left(a_{i_{1}},b_{1}a_{i_2},\dots,b_{i_{n-1}}a_{i_n}\right)\end{array}
\end{equation*}
for $n\in\mathbb{N}$, $1\leq i_{1},\dots,i_{n}\leq k$.

A cumulant map $R_{i_{1},\dots,i_{n}}^{\mathcal{B};a_{1},\dots,a_{k}}$ is {mixed} if there exists $r<n$ such that $i_{r}\ne i_{r+1}$. The main feature of the operator-valued cumulants is that they characterize freeness with amalgamation:

\begin{theorem}[\cite{Sp98}]
The random variables $a_{1},\dots,a_{n}$ are $\mathcal{B}$-free iff all their mixed cumulants vanish.
\end{theorem}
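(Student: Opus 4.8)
The plan is to derive both implications from the operator-valued moment--cumulant formula and two of its standard structural consequences, both of which hold over $\cB$ exactly as over $\CC$ since, as recalled above, the combinatorics only requires respecting the nesting of non-crossing partitions: (a) a cumulant $R^{\cB}_m$ with $m\geq 2$ vanishes as soon as one of its entries lies in $\cB$, so that, applied to the entry $\EE(x)1$, every argument $x$ of a cumulant may be replaced by its centring $\bar x:=x-\EE(x)1$; and (b) a cumulant one of whose entries is a product $x_px_{p+1}$ expands as $\sum_\pi R^{\cB}_\pi$, the sum over those non-crossing partitions $\pi$ whose join with the pair-partition of $p,p+1$ is the full partition. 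I would also use the routine observation that, via $\CC$-multilinearity, $\cB$-balancedness and (b), ``all mixed cumulants of $a_1,\dots,a_n$ vanish'' amounts to ``$R^{\cB}_m(x_1,\dots,x_m)=0$ whenever $x_\ell\in\langle a_{j(\ell)},\cB\rangle$ and the indices $j(1),\dots,j(m)$ are not all equal'', and that $\cB$-freeness of $a_1,\dots,a_n$ means $\cB$-freeness of the algebras $A_i:=\langle a_i,\cB\rangle$.

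For ``$\Leftarrow$'', I would assume all mixed cumulants vanish, take $b_\ell\in A_{j(\ell)}$ with $j(1)\neq j(2)\neq\dots\neq j(r)$, centre them, and expand $\EE(\bar b_1\cdots\bar b_r)=\sum_{\pi\in\NCP(r)}R^{\cB}_\pi(\bar b_1,\dots,\bar b_r)$. The point is that each summand vanishes: every $\pi\in\NCP(r)$ contains a block $V=\{s+1,\dots,s+t\}$ of consecutive indices, and if $t=1$ the associated inner factor is $R^{\cB}_1(\bar b_{s+1})=\EE(\bar b_{s+1})=0$, while if $t\geq 2$ the block contains the consecutive pair $s+1,s+2$ with $j(s+1)\neq j(s+2)$, so the inner factor $R^{\cB}_t(\bar b_{s+1},\dots,\bar b_{s+t})$ is a mixed cumulant, hence zero by hypothesis. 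Either way the multiplicative structure of $(R^{\cB}_\pi)$ propagates this zero, so $\EE(\bar b_1\cdots\bar b_r)=0$, which is exactly $\cB$-freeness.

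For ``$\Rightarrow$'', I would assume $A_1,\dots,A_k$ are $\cB$-free and induct on $m$ to prove $R^{\cB}_m(a_1,\dots,a_m)=0$ whenever $a_\ell\in A_{j(\ell)}$ and the $j(\ell)$ are not all equal; the case $m=2$ is immediate from $R^{\cB}_2(a_1,a_2)=\EE(\bar a_1\bar a_2)=0$. For the inductive step I would distinguish two cases. If $j(1),\dots,j(m)$ is strictly alternating, I would centre the arguments using (a) and rewrite the moment--cumulant formula as $R^{\cB}_m(a_1,\dots,a_m)=\EE(a_1\cdots a_m)-\sum_{\pi\neq\hat{1}_m}R^{\cB}_\pi(a_1,\dots,a_m)$: the first term is $0$ by the definition of freeness, and each $\pi\neq\hat{1}_m$ has a proper consecutive block that is either a singleton (contributing a factor $\EE(\bar a_{s+1})=0$) or of size $\geq 2$ and therefore contains two distinct consecutive indices, producing a shorter mixed cumulant which vanishes by the inductive hypothesis; hence $R^{\cB}_m(a_1,\dots,a_m)=0$. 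If instead $j(\ell)=j(\ell+1)$ for some $\ell$, I would apply (b) to $R^{\cB}_{m-1}(a_1,\dots,a_{\ell-1},a_\ell a_{\ell+1},a_{\ell+2},\dots,a_m)$, which is a mixed cumulant of length $m-1$ and so vanishes by induction, obtaining $0=R^{\cB}_m(a_1,\dots,a_m)+\sum_\pi R^{\cB}_\pi(a_1,\dots,a_m)$ with the sum over the two-block non-crossing partitions $\pi=\{B_1,B_2\}$ that separate $\ell$ from $\ell+1$. For each such $\pi$ at least one block carries a non-constant index pattern (were both constant, $j(\ell)=j(\ell+1)$ would force all the $j(\cdot)$ to coincide), and collapsing the nesting of $\pi$ — one of its blocks is always a consecutive interval — exhibits $R^{\cB}_\pi$ as built from the cumulant attached to the non-constant block, a mixed cumulant of length $<m$, which vanishes by the inductive hypothesis. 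Thus $R^{\cB}_m(a_1,\dots,a_m)=0$, completing the induction.

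I expect the main obstacle to be precisely this last, non-alternating case: one must check that invoking (b) at a run isolates $R^{\cB}_m$ with every other term reducible, through the nesting, to a strictly shorter mixed cumulant, and must carry out the index bookkeeping for the two-block partitions that arise (and likewise for the partitions produced when reducing words in the preliminary observation). The remaining arguments are routine manipulations within the operator-valued moment--cumulant formalism, which, as recalled in the text, runs parallel to the scalar theory once the nesting of non-crossing partitions is respected.
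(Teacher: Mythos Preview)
The paper does not prove this theorem; it is quoted as a known result from \cite{Sp98} and used as a black box. So there is no ``paper's own proof'' to compare against. Your argument is the standard one (essentially the operator-valued version of the proof in \cite{NiSp06}, Lecture~11, or the original argument in \cite{Sp98}), and it is correct.

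A few small remarks on presentation. In the alternating case of the ``$\Rightarrow$'' direction you write the moment--cumulant identity with the uncentred $a_i$'s right after announcing that you centre; make the centring explicit in the displayed formula to avoid confusion. In the non-alternating case your key claim---that for each two-block non-crossing $\pi=\{B_1,B_2\}$ separating $\ell$ from $\ell+1$ one has $R^{\cB}_\pi=0$---deserves one more line: the interval block $V$ (which exists in any two-block non-crossing partition) is either mixed, in which case its inner cumulant vanishes by induction, or constant, in which case its evaluation lands in $\cB$, gets absorbed into the adjacent argument (which stays in the same $A_{j(\cdot)}$ since $\cB\subseteq A_j$), and the remaining outer cumulant on the complementary block is then mixed of length $<m$ and vanishes by induction. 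You say this, but somewhat elliptically. Finally, your ``routine observation'' that vanishing of the cumulant maps $R^{\cB;a_1,\dots,a_k}_{i_1,\dots,i_n}$ (the paper's definition of mixed cumulants) is equivalent to vanishing of all $R^{\cB}_m$ with arguments in the algebras $A_j=\langle a_j,\cB\rangle$ is correct but not entirely trivial: it requires the product formula~(b) together with an induction on the total word length, and you should at least flag that induction.
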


We recall a useful result from \cite{NSS02} which gives conditions for (operator-valued) cumulants to be restrictions of cumulants with respect to a larger algebra.

\begin{prop}

\label{thm:Restrictions}

Let $1\in\mathcal{D}\subset\mathcal{B}\subset\mathcal{A}$ be algebras such that $(\mathcal{A},\FF)$ and $(\mathcal{B},\EE)$ are respectively $\mathcal{B}$-valued and $\mathcal{D}$-valued probability spaces (and therefore $(\mathcal{A},\EE\circ\FF)$ is a $\mathcal{D}$-valued probability space) and let $a_{1},\dots,a_{k}\in\mathcal{A}$.

Assume that the $\mathcal{B}$-cumulants of $a_{1},\dots,a_{k}\in\mathcal{A}$ satisfy
\begin{equation*}
R_{i_{1},\dots,i_{n}}^{\mathcal{B};a_{1},\dots,a_{k}}\left(d_{1},\dots,d_{n-1}\right)\in\mathcal{D},
\end{equation*}
for all $n\in\mathbb{N}$, $1\leq i_{1},\dots,i_{n}\leq k$, $d_{1},\dots,d_{n-1}\in\mathcal{D}$.

Then the $\mathcal{D}$-cumulants of $a_{1},\dots,a_{k}$ are exactly the restrictions of the $\mathcal{B}$-cumulants of $a_{1},\dots,a_{k}$, namely
\begin{equation*}
R_{i_{1},\dots,i_{n}}^{\mathcal{B};a_{1},\dots,a_{k}}\left(d_{1},\dots,d_{n-1}\right)=R_{i_{1},\dots,i_{n}}^{\mathcal{D};a_{1},\dots,a_{k}}\left(d_{1},\dots,d_{n-1}\right),
\end{equation*}
for all $n\in\mathbb{N}$, $1\leq i_{1},\dots,i_{n}\leq k$, $d_{1},\dots,d_{n-1}\in\mathcal{D}$.
\end{prop}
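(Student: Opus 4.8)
The plan is to show that the restrictions of the $\mathcal{B}$-cumulants to $\mathcal{D}$-valued arguments satisfy the moment--cumulant relations defining the $\mathcal{D}$-cumulants of the tuple $a_1,\dots,a_k$ in the $\mathcal{D}$-valued space $(\mathcal{A},\EE\circ\FF)$, and then to conclude by uniqueness of cumulants. First I would record two elementary facts. Since $\EE:\mathcal{B}\to\mathcal{D}$ is a conditional expectation, applying its module property to $1\in\mathcal{B}$ gives $\EE|_{\mathcal{D}}=\mathrm{id}$. And by hypothesis the maps $r_n$ obtained by restricting $R^{\mathcal{B}}_n$ to arguments of the canonical form $d_0 a_{i_1}d_0',\,d_1 a_{i_2}d_1',\dots,d_{n-1}a_{i_n}d_{n-1}'$ with $d_j,d_j'\in\mathcal{D}$ take values in $\mathcal{D}$ (use $\mathcal{B}$-balancedness to reduce such an argument list to $a_{i_1},e_1a_{i_2},\dots,e_{n-1}a_{i_n}$ with $e_j\in\mathcal{D}$, to which the assumption directly applies); being $\mathcal{D}$-balanced, the $r_n$ generate a multiplicative family $(r_\pi)_{\pi\in\NCP}$ on tuples of this form.

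The combinatorial core is the identity $R^{\mathcal{B}}_\pi(c_1,\dots,c_n)=r_\pi(c_1,\dots,c_n)\in\mathcal{D}$ for every $\pi\in\NCP(n)$ and every tuple $(c_1,\dots,c_n)$ with $c_1=a_{i_1}$, $c_j=b_{j-1}a_{i_j}$, $b_j\in\mathcal{D}$. I would prove it by induction on the number of blocks of $\pi$, using the recursion that defines a multiplicative family: pick an interval block $V=\{s+1,\dots,s+l\}$; its arguments are in canonical form, so $R^{\mathcal{B}}_l(c_{s+1},\dots,c_{s+l})\in\mathcal{D}$ by the previous paragraph, and absorbing this $\mathcal{D}$-valued factor into the neighbouring slot $c_s$ leaves that slot of the form $d\,a_{i_s}\,d'$ with $d,d'\in\mathcal{D}$, i.e.\ again canonical. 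Thus the reduced tuple stays canonical, the hypothesis re-applies at each stage, and the recursion computing $R^{\mathcal{B}}_\pi$ coincides termwise with the one computing $r_\pi$.

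Now I would apply $\EE$ to the $\mathcal{B}$-valued moment--cumulant formula for $\FF$. Using $\mathbb{C}$-linearity of $\EE$, the fact that $\EE|_{\mathcal{D}}=\mathrm{id}$, and that each $R^{\mathcal{B}}_\pi(c_1,\dots,c_n)$ already lies in $\mathcal{D}$ by the core identity, I obtain
\[
(\EE\circ\FF)(a_{i_1}b_1 a_{i_2}\cdots b_{n-1}a_{i_n})=\sum_{\pi\in\NCP(n)} r_\pi(a_{i_1},b_1a_{i_2},\dots,b_{n-1}a_{i_n}).
\]
On the other hand, by definition the $\mathcal{D}$-cumulants of the tuple satisfy the same relation with $r_\pi$ replaced by $R^{\mathcal{D}}_\pi$. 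An induction on $n$ then finishes the proof: for $n=1$ both give $(\EE\circ\FF)(a_{i_1})$; for $n>1$, every $\pi\neq\hat{1}_n$ has all blocks of size $\le n-1$, so $R^{\mathcal{D}}_\pi$ and $r_\pi$ agree on canonical tuples by the inductive hypothesis (the multiplicative recursion preserves canonical form, since inner cumulant values lie in $\mathcal{D}$), and subtracting the two expansions leaves $R^{\mathcal{D}}_n=r_n$ on canonical tuples; recalling $r_n(a_{i_1},b_1a_{i_2},\dots,b_{n-1}a_{i_n})=R^{\mathcal{B};a_1,\dots,a_k}_{i_1,\dots,i_n}(b_1,\dots,b_{n-1})$ gives exactly the asserted equality.

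The main obstacle I anticipate is the bookkeeping in the core combinatorial step: one must verify that absorbing an inner $\mathcal{D}$-valued cumulant into an adjacent slot keeps every argument in the canonical form to which the hypothesis applies, and that this works no matter which interval block the recursion selects (so that the well-definedness of the multiplicative family is respected). Once the ``canonical form'' invariant is correctly isolated, the argument is a clean double induction --- on $n$ and on the number of blocks of $\pi$ --- with no analytic input.
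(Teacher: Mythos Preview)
The paper does not prove this proposition; it merely recalls it from \cite{NSS02} and uses it as a tool. Your argument is correct and is essentially the standard proof given in that reference: show that the $\mathcal B$-valued cumulant functionals $R^{\mathcal B}_\pi$, when restricted to $\mathcal D$-coefficient arguments, remain $\mathcal D$-valued (by the interval-block recursion and the hypothesis), then apply $\EE$ to the $\mathcal B$-valued moment--cumulant formula, use $\EE|_{\mathcal D}=\mathrm{id}$, and conclude by the uniqueness of the $\mathcal D$-cumulants via M\"obius inversion/induction on $n$. The bookkeeping point you flag --- that absorbing an inner $\mathcal D$-valued cumulant into an adjacent slot preserves the ``canonical'' form $d\,a_i\,d'$ with $d,d'\in\mathcal D$, and that the boundary cases (interval block at either end) are handled by $\mathcal B$-balancedness --- is exactly the place where care is needed, and you have identified it correctly.
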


\begin{corollary}\label{maintool}
Let $\cB\subseteq A_1,A_2\subseteq \cA$ be $\cB$-free and let $\cD\subseteq M_N(\CC)\otimes\cB$. Assume that, individually, the $M_N(\CC)\otimes\cB$-valued moments (or, equivalently, the $M_N(\CC)\otimes\cB$-cumulants) of both $x\in M_N(\CC)\otimes A_1$ and $y\in M_N(\CC)\otimes A_2$, when restricted to arguments in $\cD$, remain in $\cD$. Then $x,y$ are $\cD$-free.
\end{corollary}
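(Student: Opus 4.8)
The plan is to deduce this from Proposition~\ref{thm:Restrictions}, applied to the pair $(x,y)$ after first upgrading the $\cB$-freeness of $A_1,A_2$ to freeness over the matrix algebra $M_N(\CC)\otimes\cB$ by means of Proposition~\ref{matrixfreeness}. Concretely, I would take the chain of algebras $1\in\cD\subset M_N(\CC)\otimes\cB\subset M_N(\CC)\otimes\cA$, playing the roles of $\cD\subset\cB\subset\cA$ in Proposition~\ref{thm:Restrictions}, with the conditional expectation $M_N(\CC)\otimes\cA\to M_N(\CC)\otimes\cB$ being $\id\otimes\EE$; the existence of a conditional expectation onto $\cD$ is a mild hypothesis implicit in the statement (in all our applications $\cD$ is a commutative $*$-subalgebra for which this is clear). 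Also, since $\cD$ is a unital algebra and cumulants and moments determine one another through the operator-valued moment--cumulant formulas, the condition ``moments restricted to $\cD$-arguments stay in $\cD$'' and the condition ``cumulants restricted to $\cD$-arguments stay in $\cD$'' are equivalent, so it suffices to argue with cumulants.

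First I would invoke Proposition~\ref{matrixfreeness} with the $\cB$-probability space $(\cA,\EE)$ and the $\cB$-free subalgebras $A_1,A_2$: it gives that $M_N(\CC)\otimes A_1$ and $M_N(\CC)\otimes A_2$ are free over $M_N(\CC)\otimes\cB$ with respect to $\id\otimes\EE$. By Speicher's cumulant criterion for freeness, this means that every \emph{mixed} $M_N(\CC)\otimes\cB$-cumulant of the tuple $(x,y)$ vanishes, i.e. $R^{M_N(\CC)\otimes\cB;\,x,y}_{i_1,\dots,i_n}\equiv 0$ as soon as $i_r\neq i_{r+1}$ for some $r$. Next I would check the hypothesis of Proposition~\ref{thm:Restrictions} for the whole tuple $(x,y)$: we need $R^{M_N(\CC)\otimes\cB;\,x,y}_{i_1,\dots,i_n}(d_1,\dots,d_{n-1})\in\cD$ for all $d_1,\dots,d_{n-1}\in\cD$. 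If the index string $(i_1,\dots,i_n)$ is constant this cumulant is an individual $M_N(\CC)\otimes\cB$-cumulant of $x$ (resp.\ of $y$), which lies in $\cD$ by assumption; if the string is mixed the cumulant is $0\in\cD$ by the previous sentence. Hence the hypothesis of Proposition~\ref{thm:Restrictions} holds.

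Proposition~\ref{thm:Restrictions} then tells us that the $\cD$-cumulants of $(x,y)$ are exactly the restrictions to $\cD$-arguments of its $M_N(\CC)\otimes\cB$-cumulants. In particular, every mixed $\cD$-cumulant of $(x,y)$ coincides with the corresponding mixed $M_N(\CC)\otimes\cB$-cumulant, which vanishes; applying Speicher's criterion once more, $x$ and $y$ are $\cD$-free, as claimed. The one step that requires care is the middle one: the hypothesis only controls the cumulants of $x$ and of $y$ \emph{separately}, so the argument hinges on observing that the remaining (mixed) cumulants of the tuple are automatically zero thanks to Proposition~\ref{matrixfreeness}, which is precisely what makes Proposition~\ref{thm:Restrictions} applicable to $(x,y)$ as a whole. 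The minor additional point to flag is the implicit existence of the conditional expectation onto $\cD$.
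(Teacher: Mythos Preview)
Your proof is correct and follows the same overall strategy as the paper: use Proposition~\ref{matrixfreeness} to obtain $M_N(\CC)\otimes\cB$-freeness of $x$ and $y$, verify the hypothesis of Proposition~\ref{thm:Restrictions} for the pair $(x,y)$, and conclude that the mixed $\cD$-cumulants vanish. The only difference is that you verify the hypothesis of Proposition~\ref{thm:Restrictions} directly on cumulants (non-mixed ones lie in $\cD$ by assumption, mixed ones vanish by freeness), whereas the paper takes a slightly more roundabout route, first showing via an interval-block induction that the restricted mixed \emph{moments} lie in $\cD$ and then passing to cumulants by M\"obius inversion; your shortcut is perfectly valid and in fact cleaner.
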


\begin{proof}
By Proposition \ref{matrixfreeness} $x,y$ are $M_N(\CC)\otimes\cB$ free. The $M_N(\CC)\otimes\cB$-mixed moments on $x,y$ restricted to arguments $d_1,\dots,d_{n-1}\in \cD$ can be expressed through the $M_N(\CC)\otimes\cB$ moment-cumulant formula $$\EE\left(a_{1}d_1\dots d_{n-1}a_{n}\right)=\sum_{\pi\in \NCP\left(n\right)}R^{M_N(\CC)\otimes\cB}_{\pi}\left(a_{1},\dots,d_{n-1}a_{n}\right),$$  as a sum of (nested) products of individual $M_N(\CC)\otimes\cB$ cumulants on $x$ and $y$. For each partition $\pi$, we pick an interval block. The cumulant corresponding to this block remains in $\cD$ (in particular it is zero if the cumulant is mixed). By proceeding inductively removing interval blocks, we obtain that each sumand is in $\cD$, and hence, so is any restricted mixed moment $\EE\left(a_{1}d_1\dots d_{n-1}a_{n}\right)$.

By Moebius inversion (or induction), any restricted mixed cumulant is also in $\cD$ and we are allowed to use Theorem \ref{thm:Restrictions}. Hence the mixed $\cD$ cumulants of $x,y$ coincide with the restrictions of the $M_N(\CC)\otimes\cB$ cumulants, which vanish due to $M_N(\CC)\otimes\cB$-freeness. Thus the $\cD$-mixed cumulants vanish as well, proving the assertion.
\end{proof}

In view of Remark \ref{freemunits} (3), if $A_1:=\langle(e_{ij})_{i,j\leq m+n}\rangle$ and $A_2:=\langle x\rangle$ are $\langle p_m,p_n\rangle$ free, then (by Proposition \ref{matrixfreeness}) $M_r(\CC)\otimes A_1$ and $M_r(\CC)\otimes A_2$ are $M_r(\CC)\otimes \langle p_m,p_n\rangle$-free. Our goal is to replace $M_r(\CC)\otimes \langle p_m,p_n\rangle$-freeness by $\cB$-freeness for some commutative subalgebra $\cB\subset M_r(\CC)\otimes \langle p_m,p_n\rangle$, using Corollary \ref{maintool}.

\subsection{Transforms}

Like in the scalar-valued case, there are analytical tools to compute operator-valued free convolutions, which are based on the $\mathcal{B}$-valued Cauchy transform $$G_x^{\mathcal{B}}(b)=\EE((b-x)^{-1}),$$ which maps the operatorial upper half-plane $$\mathbb{H}^+(\mathcal{B}):=\{b\in\mathcal{B}:\exists \ee> 0 \text{ such that }-i(b-b^*)\geq \ee\cdot 1\}$$ into the lower half-plane $\mathbb{H}^-(\mathcal{B})=-\mathbb{H}^+(\mathcal{B})$.
In the usual settings coming from random matrix models (as we have seen above), our probability space $\cA$ may have several operator-valued structures $\EE_i:\cA\to\cB_i$ simultaneously, with $\CC=\cB_1\subset\cB_2\subset\dots\subset\cB_k$, and $\EE_i\circ \EE_{i+1}=\EE_i$.
We are usually interested ultimately in the scalar-valued distribution, which can be obtained (via Stieltjes-inversion) from the Cauchy transform.
The later can be obtained from any ''upper'' $\mathcal{B}_i$-valued Cauchy transform, as we have that, for all $b\in\cB_i$ $$\EE_i(G_x^{\mathcal{B}_{i+1}}(b))=\EE_i\circ \EE_{i+1}((b-x)^{-1})=\EE_i((b-x)^{-1})=G^{\cB_i}_x(b).$$

In terms of moments, the operator-valued Cauchy transform is given by $$G_x^{\mathcal{B}}(b)=\EE((b-x)^{-1})=\sum_{n\geq 0}\EE(b^{-1}(xb^{-1})^n).$$

The {operator-valued $\mathcal{R}$-transform} is defined by
\begin{equation*}
\mathcal{R}_{x}^{\mathcal{B}}\left(b\right)=\sum_{n\geq1}R_{n}^{\mathcal{B}}\left(x,bx,\dots,bx\right).
\end{equation*}
The vanishing of mixed cumulants for free variables implies the additivity of the cumulants, and thus also the additivity of the $\cR$-transforms (\cite{Vo95}): If $a_1$ and $a_2$ are $\cB$-free then we have for $b\in \cB$ that
$\cR^{\mathcal{B}}_{a_1+a_2}(b)=\cR^{\mathcal{B}}_{a_1}(b)+\cR_{a_2}(b)$.

These transforms satisfy the functional equation
\begin{equation}
G_{a}^{\mathcal{B}}\left(b\right)=\left(\mathcal{R}_{a}^{\mathcal{B}}\left(G_{a}^{\mathcal{B}}\left(b\right)\right)-b\right)^{-1} \label{GR}.
\end{equation}

The $\mathcal S$-transform was defined by Voiculescu \cite{Vo87} to compute multiplicative free convolutions. The most general operator valued generalization of the $\mathcal S$ transform was defined in \cite{Dy06}. 

If $M_a(b)=\sum_{k\geq 1}\EE((ab)^k)$ is the moment generating series of $a$, it is possible to define a compositional inverse $M_a^{\langle -1\rangle}(b)$ in a suitable domain using the analytic inverse function theorems on Banach algebras. The $\mathcal S$-transform is then defined as $\mathcal S_a(b)=(1+b)b^{-1}M_a^{\langle -1\rangle}(b)$.

The $\mathcal S$-transform factorizes the operator-valued multiplicative convolution in the following sense: If $x,y$ are $\cB$-free, then $$\mathcal S_{xy}(b)=\mathcal S_y(b)\mathcal S_x(\mathcal S_y(b)^{-1}b\mathcal S_y(b)).$$

In this work we will be mainly interested in the case where the algebra $\cB$ is commutative, the right hand can be then simplified to $\mathcal S_x(b)\mathcal S_y(b)$. 

For our general numerical solution, where $\cB$ is not commutative, the factorization property of the $\mathcal S$-transform is implicitly being used in the subordination formulas (see \cite{BSTV14}).

In the commutative case, we will use the following equation relating the $\mathcal S$ and the $\cR$-transforms: $b\mathcal R(b) + \mathcal S(b \mathcal R(b)) = b$ (see, for example, Lecture 18 in \cite{NiSp06}).

A drawback of the operator-valued setting is that, unless we ask $\mathcal{B}$ to be commutative, one can hardly compute explicit distributions:
although we have $\mathcal{B}$-valued generalizations of the $\mathcal R$ and $\mathcal S$-transforms, the task of explicitly inverting these operator-valued analytic maps is nearly impossible for any non-trivial situation (even for finite dimensional, relatively simple sub-algebras, like $\cB=M_2(\mathbb{C})$).

A very powerful (numerical) method to obtain $\mathcal{B}$-valued free convolutions is based on the analytic subordination phenomena observed by Biane (\cite{Bi98}, see also \cite{Vo02}). In particular, the approach of \cite{BeBe07} to obtain the subordination functions by iterating analytic maps can be very efficiently performed in the $\mathcal{B}$-valued context.

In \cite{BSTV14} the fixed point algorithm from \cite{BeBe07} for computing free multiplicative convolutions was generalized to the operator-valued level. Let us first recall the eta transform (or Boolean cumulant transform)

$$\eta_x(b)=1-b(\EE((b^{-1}-x)^{-1}))^{-1}.$$

\begin{theorem}[\cite{BSTV14}]\label{Main-op}
Let $x>0,y=y^*\in\mathcal A$ be two random
variables with invertible expectations, free over $\cB$.
There exists a
Fr\'echet holomorphic map $
\omega_2\colon\{b\in \cB\colon\Im(bx)>0\}\to\mathbb H^+(\cB),$
such that
\begin{enumerate}
\item $\eta_y(\omega_2(b))=\eta_{xy}(b)$, $\Im(bx)>0$;
\item $\omega_2(b)$ and $b^{-1}\omega_2(b)$ are analytic around zero;
\item For any $b\in \cB$ so that $\Im(bx)>0$, the map $g_b\colon
\mathbb H^+(\cB)\to\mathbb H^+(\cB)$, $g_b(w)=bh_x(h_y(w)b)$, where
\begin{equation}
h_x(b)=b^{-1}-\mathbb E\left[(b^{-1}-x)^{-1}\right]^{-1};
\end{equation}
is well-defined, analytic and
for any fixed $w\in\mathbb H^+(\cB)$,
$$
\omega_2(b)=\lim_{n\to\infty}g_b^{\circ n}(w),
$$
in the weak operator topology.
\end{enumerate}
Moreover, if one defines $\omega_1(b):=h_y(\omega_2(b))b$, then
$$
\eta_{xy}(b)=\omega_2(b)\eta_x(\omega_1(b))\omega_2(b)^{-1},
\quad\Im (bx)>0.
$$
\end{theorem}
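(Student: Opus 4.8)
The plan is to lift to the operator-valued setting the fixed-point strategy of Belinschi and Bercovici \cite{BeBe07} for the scalar multiplicative free convolution, following the same pattern as the operator-valued \emph{additive} convolution in \cite{BMS13}. Two separate ingredients are combined: a holomorphic fixed-point theorem, which constructs $\omega_2$ and yields the convergence of iterates in part~(3); and an operator-valued analytic subordination result, which identifies this fixed point as the subordination function and hence gives part~(1). The hypotheses ``$x>0$'' and ``$\EE(x),\EE(y)$ invertible'' enter at every step: invertibility makes the $\eta$- and $h$-transforms Fr\'echet holomorphic and non-degenerate near $0$, while positivity of $x$ singles out the twisted half-plane $\{b\in\cB:\Im(bx)>0\}$ as the natural domain of $\eta_{xy}$, since $xy$ need not be self-adjoint. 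Note that, unlike for the scalar distribution, one cannot simply replace $xy$ by the self-adjoint $x^{1/2}yx^{1/2}$, because $x^{1/2}\notin\cB$; the multiplicative problem must be solved as such.

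\emph{Analytic set-up and existence of the fixed point.} I would start from the elementary identity $\eta_x(b)=b\,h_x(b)$ and the near-$0$ expansions $h_x(b)=\EE(x)+O(b)$, $h_y(b)=\EE(y)+O(b)$ (obtained by expanding resolvents), together with the mapping properties of $h_x$ and $h_y$ on the relevant (twisted) operatorial half-planes, i.e. the operator-valued analogue of the classical Nevanlinna picture. From these one verifies that for each fixed $b$ with $\Im(bx)>0$ the map $g_b(w)=b\,h_x(h_y(w)b)$ is a well-defined Fr\'echet holomorphic self-map of $\mathbb{H}^+(\cB)$ whose image stays bounded away from the boundary. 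After the Cayley transform identifying $\mathbb{H}^+(\cB)$ with the operatorial unit ball, the Earle--Hamilton fixed-point theorem then provides a unique attracting fixed point of $g_b$, and the Picard iterates $g_b^{\circ n}(w)$ converge to it (in the weak operator topology) for every starting point $w\in\mathbb{H}^+(\cB)$; this is part~(3) and defines $\omega_2(b)$. Holomorphic dependence of the family $(g_b)_b$ on the parameter $b$ then transfers to the attracting fixed point, so that $b\mapsto\omega_2(b)$ is Fr\'echet holomorphic from $\{b:\Im(bx)>0\}$ into $\mathbb{H}^+(\cB)$.

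\emph{Identification with the subordination function.} This is the main obstacle, and where the free-probability content sits. I would separately establish the operator-valued analytic subordination theorem for the multiplicative convolution --- the analogue of Biane's and Voiculescu's results (\cite{Bi98,Vo02}) --- namely the existence, near $0$, of a Fr\'echet holomorphic $\omega$ with $\eta_{xy}(b)=\eta_y(\omega(b))$; this can be done either via the conditional expectation of suitable resolvents onto $\langle y,\cB\rangle$, or combinatorially from the operator-valued moment--cumulant formalism of \cite{Sp98,NSS02} and the vanishing of mixed cumulants for $\cB$-free variables. The second, and most delicate, point is to check that this $\omega$ satisfies the fixed-point equation $\omega=g_b(\omega)$ in a neighbourhood of $0$; then, by uniqueness of the attracting fixed point of $g_b$ and analytic continuation, $\omega$ agrees with $\omega_2$ on the whole domain, which yields $\eta_y(\omega_2(b))=\eta_{xy}(b)$, i.e. part~(1).

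\emph{Behaviour near zero and the product formula.} Writing $\omega_2(b)=b\,h_x(\omega_1(b))$ with $\omega_1(b)=h_y(\omega_2(b))\,b$ and using the expansions above, one finds $\omega_1(b)=\EE(y)\,b+O(b^2)$ and $b^{-1}\omega_2(b)=h_x(\omega_1(b))=\EE(x)+O(b)$, both convergent power series around $0$; this is part~(2) (and it is what pins down the relevant holomorphic branch of $\omega_2$). The ``moreover'' identity is then pure algebra: the fixed-point equation gives $h_x(\omega_1(b))=b^{-1}\omega_2(b)$, so $\eta_x(\omega_1(b))=\omega_1(b)\,h_x(\omega_1(b))=h_y(\omega_2(b))\,b\,b^{-1}\omega_2(b)=h_y(\omega_2(b))\,\omega_2(b)$, while $\eta_y(\omega_2(b))=\omega_2(b)\,h_y(\omega_2(b))$; hence $\omega_2(b)\,\eta_x(\omega_1(b))\,\omega_2(b)^{-1}=\omega_2(b)\,h_y(\omega_2(b))=\eta_y(\omega_2(b))=\eta_{xy}(b)$ for $\Im(bx)>0$, as claimed.
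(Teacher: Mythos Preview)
The paper does not prove this theorem at all: it is quoted verbatim from \cite{BSTV14} and used as a black box for the numerical algorithm in Section~\ref{BLT}. There is therefore no ``paper's own proof'' to compare your proposal against.

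That said, your sketch is a faithful outline of the strategy actually carried out in \cite{BSTV14}: the Earle--Hamilton fixed-point argument on the Cayley-transformed operatorial half-plane to produce $\omega_2$ and the convergence of iterates, combined with the operator-valued multiplicative subordination to identify the fixed point, and the algebraic derivation of the conjugation formula from $\eta_x(b)=b\,h_x(b)$. The one place where you are vague is exactly the place that carries the real work in \cite{BSTV14}, namely verifying that $g_b$ genuinely maps $\mathbb H^+(\cB)$ strictly into itself (so that Earle--Hamilton applies); this requires a careful analysis of how $h_x$ and $h_y$ act on the twisted half-planes when $x>0$, and is not a formality. Your near-zero expansions and the ``moreover'' computation are correct.
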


The invertibility condition on $\EE(y)$ can be dropped if we restrict to finite dimensional algebras.

\begin{prop}[\cite{BSTV14}]\label{main-finite}
Let $\cB$ be finite-dimensional and $x>0$, $y=y^*$ be free
over $\cB$. There exists a function $g\colon\{b\in \cB\colon\Im(bx)>0\}\times\mathbb H^+(\cB)
\to\mathbb H^+(\cB)$ so that
\begin{enumerate}
\item $\omega_2(b):=\lim_{n\to\infty}g_b^{\circ n}(w)$ exists, does not depend on $w\in
\mathbb H^+(\cB)$, and is analytic on $\{b\in \cB\colon\Im(bx)>0\}$;
\item
$$
\eta_y(\omega_2(b))=\eta_{xy}(b),\quad b\in\{b\in \cB\colon\Im(bx)>0\}.
$$
\end{enumerate}
\end{prop}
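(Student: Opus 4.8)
My plan is to bootstrap from Theorem~\ref{Main-op}, using finite-dimensionality of $\cB$ to trade the missing invertibility of $\EE(y)$ for compactness. The function in the statement will be $g(b,w)=g_b(w):=b\,h_x(h_y(w)b)$, with $h_x,h_y$ as in Theorem~\ref{Main-op}. First I would record that $g_b$ is well defined and analytic on $\mathbb H^+(\cB)$ with \emph{no} assumption on $\EE(y)$: for $w\in\mathbb H^+(\cB)$ one has $w^{-1}\in\mathbb H^-(\cB)$, hence $w^{-1}-y\in\mathbb H^-(\cA)$ is invertible, $(w^{-1}-y)^{-1}\in\mathbb H^+(\cA)$, and — $\EE$ being unital and positive — $\EE[(w^{-1}-y)^{-1}]\in\mathbb H^+(\cB)$ is again invertible (elements of $\mathbb H^{\pm}$ are automatically invertible in any $C^*$-algebra); so $h_y(w)$ makes sense, and likewise $h_x$ and the composition. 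That $g_b$ restricts to an analytic self-map of $\mathbb H^+(\cB)$ is then checked by the same half-plane computations as in \cite{BeBe07,BSTV14}.

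Next I would perturb: set $y_\epsilon:=y+\epsilon 1_\cA$ for $\epsilon>0$. Then $y_\epsilon=y_\epsilon^{*}$, the subalgebras $\langle x,\cB\rangle$ and $\langle y_\epsilon,\cB\rangle=\langle y,\cB\rangle$ remain $\cB$-free, and $\EE(y_\epsilon)=\EE(y)+\epsilon 1_\cB$ is invertible for all sufficiently small $\epsilon>0$ (here $\dim\cB<\infty$ enters: the non-invertibility values of $\epsilon$ are the finitely many points $-\lambda$, $\lambda\in\mathrm{spec}\,\EE(y)$). Assuming, as is standard, that ``$x>0$'' means $x$ invertible, so that $\EE(x)$ is invertible, Theorem~\ref{Main-op} applies to $(x,y_\epsilon)$ and yields a Fr\'echet-holomorphic $\omega_2^{(\epsilon)}\colon\{b:\Im(bx)>0\}\to\mathbb H^+(\cB)$ with $\eta_{y_\epsilon}(\omega_2^{(\epsilon)}(b))=\eta_{xy_\epsilon}(b)$ and $\omega_2^{(\epsilon)}(b)=\lim_n (g^{(\epsilon)}_{b})^{\circ n}(w)$, where $g^{(\epsilon)}_b(w)=b\,h_x(h_{y_\epsilon}(w)b)$. (If $\EE(x)$ is also singular, one perturbs $x$ too and tracks the mild $\epsilon$-dependence of the domain $\{b:\Im(bx)>0\}$ by the same compactness argument below.)

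The third and decisive step is to send $\epsilon\to0^{+}$. The key ingredient is an a priori bound, \emph{uniform in $\epsilon\in(0,\epsilon_0]$ and locally uniform in $b$}, confining $\omega_2^{(\epsilon)}(b)$ to a fixed relatively compact subset of $\mathbb H^+(\cB)$ at positive distance from $\partial\mathbb H^+(\cB)$; this is extracted from norm and positivity estimates on the reciprocal Cauchy transform $F_y(b)=G^{\cB}_y(b)^{-1}$ (which satisfies $\Im F_y(b)\geq\Im b$ and $F_y(b)=b+O(1)$ with constants depending only on the distribution of $y$, hence insensitive to invertibility of $\EE(y)$), together with the uniform boundedness of $\eta_{xy_\epsilon}(b)$ and the defining functional equations. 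Granting it, Montel's theorem (legitimate since $\dim_{\CC}\cB<\infty$) gives a subsequence $\epsilon_k\downarrow 0$ with $\omega_2^{(\epsilon_k)}\to\omega_2$ locally uniformly, $\omega_2$ holomorphic; because $G^{\cB}_{y_\epsilon}(c)=G^{\cB}_y(c-\epsilon 1)$ and $xy_\epsilon=xy+\epsilon x$ depend continuously on $\epsilon$, we get $\eta_{y_\epsilon}\to\eta_y$ and $\eta_{xy_\epsilon}\to\eta_{xy}$ locally uniformly, so passing to the limit in $\eta_{y_\epsilon}(\omega_2^{(\epsilon_k)}(b))=\eta_{xy_\epsilon}(b)$ gives conclusion~(2). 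Moreover $g^{(\epsilon)}_b\to g_b$ locally uniformly, and $\omega_2^{(\epsilon_k)}(b)=g^{(\epsilon_k)}_b(\omega_2^{(\epsilon_k)}(b))$ passes to $\omega_2(b)=g_b(\omega_2(b))$, so $\omega_2(b)$ is a fixed point of $g_b$; by the Earle--Hamilton theorem (applied on a bounded convex subdomain of $\mathbb H^+(\cB)$ containing $\overline{g_b(\mathbb H^+(\cB))}$, using that $g_b$ maps into a relatively compact subset) this fixed point is unique and attracts every orbit, whence $\omega_2(b)=\lim_n g_b^{\circ n}(w)$ for \emph{every} $w\in\mathbb H^+(\cB)$ and along \emph{every} subsequence — conclusion~(1). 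Analyticity of $b\mapsto\omega_2(b)$ then follows from holomorphic dependence of the attracting fixed point on the parameter $b$ (or simply from Montel applied to $\{\omega_2^{(\epsilon)}\}_\epsilon$).

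The main obstacle is exactly this uniform-in-$\epsilon$ a priori estimate: one must prevent the subordination maps $\omega_2^{(\epsilon)}$ from degenerating ($\Im\to0$) or blowing up as $\epsilon\downarrow0$. These are precisely the estimates that in a general Banach algebra fail to be uniform near $b=0$ — which is why Theorem~\ref{Main-op} carries the ``$\omega_2(b)$ and $b^{-1}\omega_2(b)$ analytic around zero'' clause — but which in finite dimensions become tractable via the norm/positivity bounds on $F_y$. As an alternative to the perturbation route, one could rerun the fixed-point scheme of \cite{BeBe07,BSTV14} \emph{directly} for $g_b(w)=b\,h_x(h_y(w)b)$ — proving the self-map-into-relatively-compact property in finite dimensions and then reading $\eta_y(\omega_2(b))=\eta_{xy}(b)$ off the fixed-point identity $\omega_2(b)=b\,h_x(h_y(\omega_2(b))b)$ — but the argument above is shorter because it invokes Theorem~\ref{Main-op} as a black box.
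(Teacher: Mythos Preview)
The paper does not contain a proof of this proposition: it is quoted verbatim from \cite{BSTV14} as a cited result, with no argument given. There is therefore no ``paper's own proof'' to compare against.

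As to your sketch itself: the overall architecture---perturb $y$ to $y_\epsilon=y+\epsilon 1$, invoke Theorem~\ref{Main-op}, then pass to the limit via normal-family arguments in the finite-dimensional $\cB$---is a sensible way to deduce the finite-dimensional statement from the invertible-expectation case. You are candid that the crux is the uniform (in $\epsilon$, locally in $b$) a~priori bound pinning $\omega_2^{(\epsilon)}(b)$ inside a fixed relatively compact subset of $\mathbb H^+(\cB)$ bounded away from the boundary; but you do not actually prove it, and the heuristic ``$F_y(b)=b+O(1)$ with constants depending only on the distribution of $y$'' is not, by itself, enough to rule out degeneration of $\Im\omega_2^{(\epsilon)}$ as $\epsilon\downarrow 0$. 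Likewise, your appeal to Earle--Hamilton requires that $g_b$ map $\mathbb H^+(\cB)$ strictly inside a bounded domain, which you assert but do not verify. These two estimates are exactly the technical content of the result in \cite{BSTV14}; absent them, what you have is a plausible outline rather than a proof. If you want a self-contained argument, the cleaner route is probably your ``alternative'': establish directly that in finite dimensions $g_b$ is a strict contraction (in the Carath\'eodory/Kobayashi sense) on a suitable bounded subdomain of $\mathbb H^+(\cB)$, then read off both existence of the fixed point and the subordination identity without any $\epsilon$-perturbation.
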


From a numerical perspective, switching between $h_x$ to $G_x$ is a simple operation.
By Proposition \ref{main-finite} one only needs the individual $\mathcal{B}$-valued Cauchy transforms of $x,y$ (or good approximations of these) to obtain the $\mathcal{B}$-valued Cauchy transform of $xy$, and hence, its probability distribution.

In a scalar-valued non-commutative probability space $(\cA,\tau)$, we have the integral representation of the Cauchy transform:
$$G_x(z)=\tau((z-x)^{-1})=\int_{\RR}(z-t)^{-1}d\mu_x(t).$$
Analogously, for linear, self-adjoint elements $c\otimes x$ in a $M_n(\CC)$-valued probability space $(M_n(\CC)\otimes(\cA),\mathrm{id}_m\otimes \tau)$, we have:
\begin{equation}\label{Rsum}
G_{c\otimes x}(b)=(\mathrm{id}_m\otimes \tau)((b-c\otimes x)^{-1})=\int_{\RR}(b-c\otimes t)^{-1}d\mu_x(t).                                                                                                                                                                                                          \end{equation}
The previous integrals can be approximated, for example, by using matrix-valued Riemann sums.

The case of deterministic matrices is simpler. If we assume that $M_n(\CC)\subset\cA$ and consider $x=x^*\in M_m(\CC)\otimes M_n(\CC)$. Then $G^{\cB}_{x}(b)=G^{\cB}_{x}(b\otimes I_n)$ is just the partial trace of the resolvent
\begin{equation}\label{parttrace}
G^{\cB}_{x}(b)=(\mathrm{id}_m\otimes \frac{1}{n}\mathrm{Tr})((b\otimes I_n-x)^{-1}).                                                                                                                                                                                                           \end{equation}

\subsection{Unitarily invariant random matrices: definition and examples}
\label{sec:random-matrices}

In this work, our main focus is the effect of a block-linear transformation on the (asymptotical) spectrum of a random matrix. The models of random matrices we consider are as follows.

\begin{definition} A self-adjoint random matrix $X \in M_n(\mathbb C)$ is called \emph{unitarily invariant} if, for all unitary operators $U \in \mathcal U_n$, the random matrices $X$ and $UXU^*$ have the same distribution.
\end{definition}

Let us consider two special cases of the definition above: the GUE ensemble and the Wishart ensemble (see \cite{AGZ10} for the general theory of random matrices).

Both GUE and Wishart ensembles are built out of i.i.d.~complex Gaussian random variables. For any integer size parameter $n$, consider the self-adjoint random matrix $G_n$ having upper diagonal entries given by
$$G_n(i,j) = \begin{cases}
Z_{ii}/\sqrt{n},&\qquad \text{ if } i=j\\
(Z_{ij} + \tilde Z_{ij})/\sqrt{2n},&\qquad \text{ if } i < j,
\end{cases}$$
where $Z_{ij}$ and $\tilde Z_{ij}$ are i.i.d.~standard complex Gaussian random variables. The classical Wigner theorem \cite[Theorem 2.2.1]{AGZ10} states that the non-commutative random variables $G_n$ converge in distribution to the standard semicircle distribution

$$\mu_{sc;0,1} = \frac{\sqrt{4-x^2}}{2\pi} \mathbf{1}_{[-2,2]}(x) dx.$$

More generally, the semicircle distribution with average $a \in \mathbb R$ and variance $\sigma^2 >0$ is defined by
$$\mu_{sc;a,\sigma} = \frac{\sqrt{4\sigma^2-(x-a)^2}}{2\pi \sigma^2} \mathbf{1}_{[a-2\sigma ,a+ 2\sigma]}(x) dx.$$

The Wishart ensemble is defined starting from a rectangular random matrix $X_n \in M_{n \times k}(\mathbb C)$, having i.i.d.~standard complex Gaussian entries. A Wishart matrix of parameters $(n,k)$ is then $W_n = X_n X_n^*$. Consider now, for a sequence of parameters $k_n$ such taht $k_n / n \to c>0$, a sequence of random Wishart matrices $W_n$ of parameters $(n,k_n)$. The limiting eigenvalue distribution of the sequence $W_n$, as $n \to \infty$, is given by the Mar\u{c}enko-Pastur (or free Poisson) distribution of parameter $c$ (see \cite[Exercise 2.1.18]{AGZ10})
$$\pi_c=\max (1-c,0)\delta_0+\frac{\sqrt{4c-(x-1-c)^2}}{2\pi x}1_{[1+c-2\sqrt{c},1+c+2\sqrt{c}]}dx.$$
The family of free Poisson measures $\pi_c$ are characterized, in terms of free probability, by the fact that their free cumulants are all equal to $c$. This family admits the following extension. For a compactly supported probability measure $\mu$, define the compound free Poisson distribution of parameter $\mu$ as the unique probability measure $\pi_\mu$ having free cumulants given by
$$\forall n \geq 1, \qquad R_n(\pi_\mu) = \int_{\mathbb R} x^n d\mu(x).$$

Finally, let us introduce the model of unitarily invariant random matrices we are interested in, and of which both the GUE and the Wishart ensembles are examples of. For a compactly supported probability measure on the real line $\mu$, we associate a sequence of self-adjoint random matrices $X_n \in M_n(\mathbb C)$ defined as
$$X_n = U_n D_n U_n^*,$$
where $D_n$ is a sequence of deterministic, diagonal matrices converging in distribution to $\mu$, and $U_n \in \mathcal U_n$ is a sequence of Haar-distributed random unitary matrices. In this paper, we are going to suppose that the space on which $X_n$ acts has a tensor product structure, and we are going to study the eigenvalue distribution of the \emph{modified} version of $X_n$, obtained by acting with a fixed linear map $\varphi$ on the blocks of $X_n$.

In view of Remark \ref{freemunits}, in order to understand the asymptotic distribution of a block-modified unitarily invariant random matrix ensemble, it seems important to study first the asymptotic joint distribution of its blocks $(x_{ij})$, where $x_{ij}=e_{1i}xe_{j1}$. By Remark \ref{freemunits}, this joint distribution depends only on $m$ and $\mu$.

In Lecture 20 of \cite{NiSp06} it was observed that, with respect to the compressed state $\tau_{11}(a)=m\tau(e_{11}ae_{11})$, the joint distribution of $(x_{ij})$ is completely characterized by their mixed cumulants, which satisfy the R-cyclic property:
\begin{equation} \label {rcyclic}
 R^{\tau_{11}}_k(x_{i_1j_1},x_{i_2j_2},\dots,x_{i_{k}j_{k}})=\left\{\begin{array}{ll}
m^{-(k-1)}r_k(x,x,\dots,x), & \text{if } j_1=i_2, j_2=i_3,\dots,j_{k}=i_1,\\
0,& \text{otherwise}
                                                      \end{array}\right.
\end{equation}
where $r_n(x,x,\dots,x)$ are the free cumulants of $x$ (or $\mu$) in the original space. This implies that, more generally, for any $\pi\in\NCP(k)$,
$$R^{\tau_{11}}_{\pi}(x_{i_1j_1},x_{i_2j_2},\dots,x_{i_{k}i_{k}})=
m^{-(k-|\pi|)}r_{\pi}(x,x,\dots,x),
$$
whenever the cyclic condition $j_{v_1}=i_{v_2},\dots, j_{v_{|V|}}=i_{v_1}$ holds for each block $V=\{v_1,\dots,v_{|V|}\}\in \pi$, and zero otherwise.

The distribution of a block-modified ensemble will then be that of the element $$x^{\varphi}:=\sum_{i,j\leq m}c_{ij}\otimes x_{ij}\in M_n(\CC)\otimes \cA_{11}$$ with respect to the functional $\varphi:=\mathrm{tr}_n \otimes \tau_{11}$, where $c_{ij}=\varphi(e_{ij})$ are the blocks of the Choi matrix of $\varphi:M_m(\CC)\to M_n(\CC)$ (see Section \ref{sec:freeness}).

For example, the distribution of each diagonal sumand $c_{ii}\otimes x_{ii}=c_{ii}^*\otimes x_{ii}^*$ is given by the \emph{classical} multiplicative convolution of the discrete measure $\mu_{c_{ii}}$ with the distribution $\mu_{x_{11}}$ of the free compression of $x$. Furthermore, the R-cyclic condition (\ref{rcyclic}) implies that the mixed cumulants of diagonal elements vanish. Hence $x_{11},\dots,x_{mm}\in \cA_{11}$ are identically distributed and free among themselves.

For a general situation we get that the moments of $x^{\varphi}$ are given by
\begin{eqnarray}
\varphi((x^{\varphi})^k)&=&\sum_{i_1,j_1,\dots,i_k,j_k\leq n}\phi((c_{i_1j_1}\otimes x_{i_1j_1})\cdots (c_{i_kj_k}\otimes x_{i_kj_k}))\\
&=&\sum_{i_1,j_1,\dots,i_k,j_k\leq n}\mathrm{tr}_n(c_{i_1j_1}\cdots c_{i_kj_k})\tau_{11}(x_{i_1j_1}\cdots  x_{i_kj_k})\\
&=&\sum_{i_1,j_1,\dots,i_k,j_k\leq n}\mathrm{tr}_n(c_{i_1j_1}\cdots c_{i_kj_k})\sum_{\pi\in \NCP(n)}R^{\tau_{11}}_{\pi}(x_{i_1j_1}\cdots  x_{i_kj_k}).
\end{eqnarray}

Hence the joint distribution of the Choi blocks and the blocks of unitarily invariant random matrices interact in a non-trivial way. One of the main objectives in this paper is to find conditions on the Choi matrices so that we are able to compute the distribution of $x^{\varphi}$ explicitly.

\section{General numerical solution}\label{BLT}

Now we show that the free multiplicative convolution can be used to compute the distribution of random matrices which have been deformed block-wise by a fixed self-adjoint linear transformation. For simplicity and clarity of exposition, we only solve the case $n=m$ (from our treatment of the case $m\neq n$ in the subsequent Sections, it should be clear how to adapt our algorithm to cover the general case).

We have already pointed out that the limiting distribution of $X_d^{\varphi}$ is the same as the distribution of $$x^{\varphi}:=\sum^m_{i,j,k,l=1}\alpha^{ij}_{kl}  e_{ij}xe_{kl},$$ where $(e_{ij})_{ij\leq m}$ are matrix units, free from $x$, and $\alpha^{ij}_{kl}$ are fixed constants. The self-adjointness condition on $\varphi$ is equivalent to the condition $\alpha^{ij}_{kl}=\bar \alpha^{lk}_{ji}$, for all  $i,j,k,l \leq m$.

If $\varphi$ is self-adjoint, it will be useful to express it as a difference of positive maps. For this we consider the lexicographic order of two-letter words in an $m$-letter alphabet, and write, for each $(i,j)<(l,k)$,
\begin{eqnarray*}
re^{i\theta}e_{ij}xe_{kl}+re^{-i\theta}e_{lk}xe_{ji}&=&r^{1/2}(e^{i\theta/2}e_{ij}+e^{-i\theta/2}e_{lk})xr^{1/2}(e^{-i\theta/2}e_{ji}+e^{i\theta/2}e_{kl})\\
& &-re_{ij}xe_{ji}-re_{lk}xe_{kl} \\
&=& (f^{ij}_{kl})x(f^{ij}_{kl})^*-re_{ij}xe_{ji}-re_{lk}xe_{kl}
\end{eqnarray*}
so that we get
\begin{eqnarray*}
x^{\varphi}&=&\sum_{\substack{1\leq i,j,k,l\leq m\\(i,j)<(l,k)}} (f^{ij}_{kl})x(f^{ij}_{kl})^*+\sum_{1\leq i,j \leq m}\beta_{ij}e_{ij}xe_{ji}\\
&=&\sum_{\substack{1\leq i,j,k,l\leq m\\(i,j)<(l,k)}} (f^{ij}_{kl})x(f^{ij}_{kl})^*+\sum_{1\leq i,j \leq m}f_{ij}(-1)^{\varepsilon(i,j)}xf_{ij}^*\\
&=&\sum_{\substack{1\leq i,j,k,l\leq m\\(i,j)\leq(l,k)}} (f^{ij}_{kl})\varepsilon^{ij}_{kl}x(f^{ij}_{kl})^*,
\end{eqnarray*}
where $\varepsilon^{ij}_{ji}=(-1)^{\varepsilon(i,j)}$ and $\varepsilon^{ij}_{kl}=1$ for $(i,j)\neq(l,k)$.

From the elements $f^{ij}_{kl}$, $(i,j)\leq(l,k)$ we build a vector $f=(f^{11}_{11}, f^{11}_{12}, \dots ,f^{mm}_{mm})$ of size $N:=m^2(m^2+1)/2$. We consider also the diagonal matrix $\Sigma=\mathrm{diag}(\varepsilon^{11}_{11},\varepsilon^{11}_{12},\dots, \varepsilon^{mm}_{mm})$ and we let  $\tilde{x}:=\Sigma \otimes x$.

We see that $f\tilde{x}f^*=x^{\varphi}$, so the desired distribution is the same (modulo a Dirac mass at zero of weight $1-1/N$) as the distribution of $f^*f\tilde{x}$ in the $C^*$-probability space $(M_N(\CC) \otimes \cA,\frac{1}{N}\mathrm{Tr}_N\otimes \tau)$. Moreover, since $w$ and each of the $e_{ij}$ are free, by Proposition \ref{matrixfreeness}, the matrices $f^*f$ and $\tilde{x}$ are free with amalgamation over $\cB=M_{N}(\CC)\otimes 1$ (with respect to the conditional expectation $\EE:=\mathrm{id}_N\otimes \tau$).

By Proposition \ref{main-finite}, we can obtain the $M_{N}(\CC)$-valued Cauchy transform of $f\tilde{x}f^*$ numerically, provided that we can compute the $M_{N}(\CC)$-valued Cauchy transforms (or good approximations) of $f^*f$ and $\tilde{x}$. The Cauchy transforms of the elements  $\tilde{x}$ and $f^*f$ can be respectively computed as explained in equations (\ref{Rsum}) and (\ref{parttrace}).

\subsection{Numerical simulations}

We compare the distributions obtained with our method and the empirical eigenvalue distributions of $20$ realizations of $1000\times1000$ block-modified for the cases where $x$ has a Wigner, Wishart and arcsine limiting distribution and the block transformation

$$\varphi((a_{ij})_{i,j\leq 2})=\begin{pmatrix}
11a_{11}+15a_{22}-25a_{12}-25a_{21} & 36a_{21} \\
36a_{12} & 11a_{11}-4a_{22}
\end{pmatrix}$$

The numerical results are plotted in Figures \ref{fig:Wigner},  \ref{fig:Wishart}, and  \ref{fig:arcsine}. 

\begin{figure}[ht]
\begin{center}
\epsfig{file=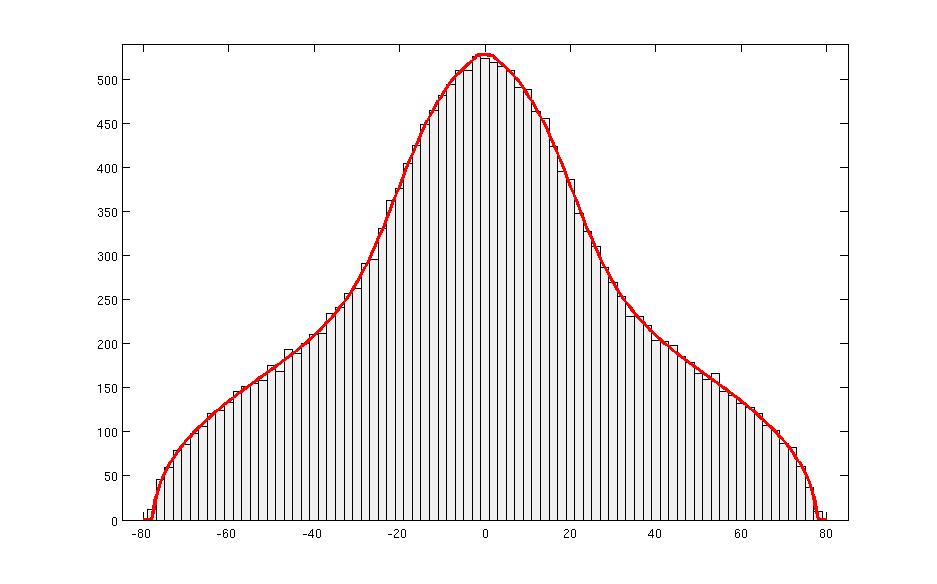, width=10cm}
\caption{Eigenvalues of block-modified Wigner matrix (histogram) and asymptotic distribution (solid line) computed with our method.}
\label{fig:Wigner}
\end{center}
\end{figure}

\begin{figure}[ht]
\begin{center}
\epsfig{file=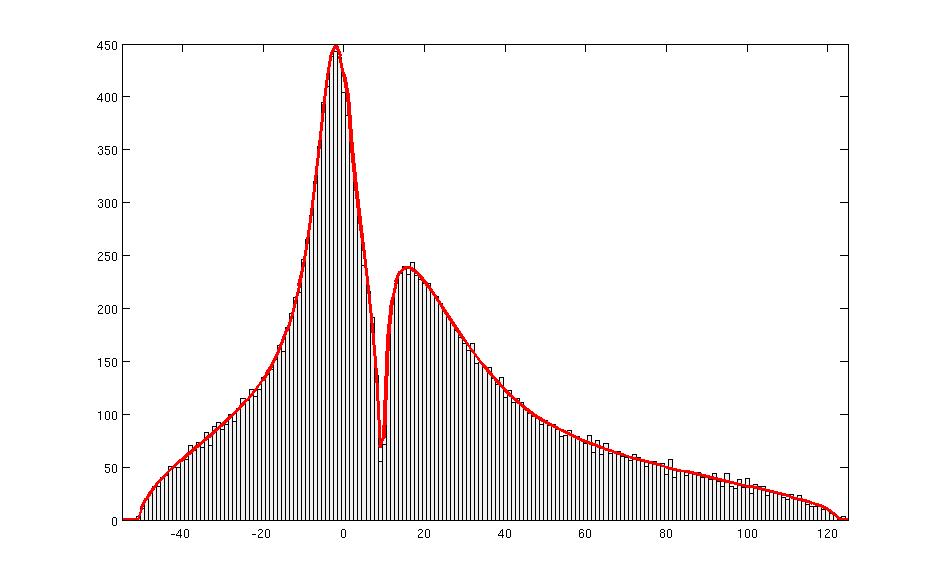, width=10cm}
\caption{Eigenvalues of block-modified Wishart matrix (histogram) and asymptotic distribution (solid line) computed with our method.}
\label{fig:Wishart}
\end{center}
\end{figure}

\begin{figure}[ht]
\begin{center}
\epsfig{file=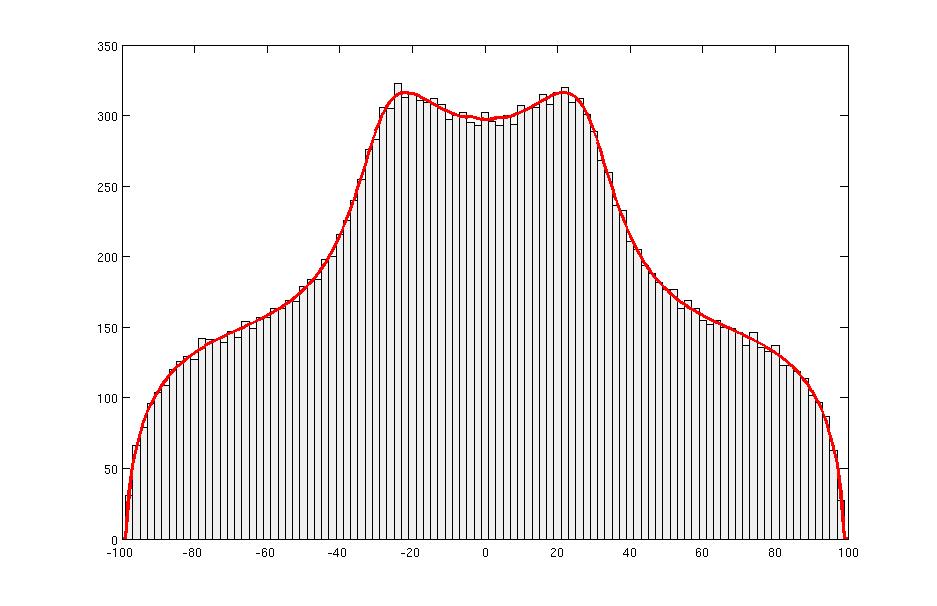, width=10cm}
\caption{Eigenvalues of block-modified randomly rotated arcsine matrix (histogram) and asymptotic distribution (solid line) computed with our method.}
\label{fig:arcsine}
\end{center}
\end{figure}

Now we want to obtain explicit formulas. For this, we will some conditions on $\varphi$ which allow us to express the asymptotic distribution of $X^{\varphi}$ as the product of operators which are free with amalgamation over a \textit{commutative algebra}.

\section{Freeness for block-modified random matrices}
\label{sec:freeness}
We shall denote by $\{E_i\}_{i=1}^r$ a fixed basis of the complex Hilbert space $\mathbb C^r$. Also, we put $E_{ij} = E_i^* E_j$. To any linear map $\varphi:M_m(\mathbb C) \to M_n(\mathbb C)$, one associates its Choi matrix \cite{choi1975completely}
\begin{equation}\label{eq:def-Choi}
M_n(\mathbb C) \otimes M_m(\mathbb C) \ni C = [\varphi \otimes \mathrm{id}](\Omega_m) = \sum_{i,j=1}^m \varphi(E_{ij}) \otimes E_{ij},
\end{equation}
where the Bell (or maximally entangled) matrix is $\Omega_m = \omega_m \omega_m^*$, with
\begin{equation}\label{eq:Bell}
\omega_m=\sum_{i=1}^m E_i \otimes E_i \in \mathbb C^m \otimes \mathbb C^m.
\end{equation}

As in Section \ref{sec:random-matrices}, we consider a sequence of self-adjoint unitarily invariant random matrices $X_d \in M_d(\mathbb C) \otimes M_m(\mathbb C)$ that converges in distribution towards some random variable $x \in \mathcal A$ having distribution $\mu$. Define
\begin{equation}\label{eq:def-c-phi}
X_d^\varphi = [\mathrm{id} \otimes \varphi](X) = \sum_{i,l=1}^n \sum_{j,k=1}^m c_{ijkl} (I_d \otimes E_{ij})X_d(I_d \otimes E_{kl}) \in M_d(\mathbb C) \otimes M_n(\mathbb C),
\end{equation}
for some coefficients $c_{ijkl} \in \mathbb C$, which are actually the entries of the Choi matrix of $\varphi$ (see Lemma \ref{lem:c-ijkl})
$$\forall \, 1 \leq i,l \leq n, \, 1  \leq j,k \leq m, \qquad c_{ijkl} = \langle E_{il} \otimes E_{jk}, C \rangle.$$

In order to consider the general case where $m \neq n$, we embed the matrices $X_d$, as well as the matrix units $E_{ij}, E_{kl}$ into the larger, square space $M_d(\mathbb C) \otimes M_{m+n}(\mathbb C)$, as follows:
\begin{align*}\hat X_d &= \begin{bmatrix}
X_d & 0_{m \times n} \otimes I_d \\
0_{n \times m} \otimes I_d & 0_{n \times n} \otimes I_d
\end{bmatrix}\\
\hat X^\varphi_d &=  \begin{bmatrix}
 0_{m \times m} \otimes I_d & 0_{m \times n} \otimes I_d \\
0_{n \times m} \otimes I_d & X^\varphi_d
\end{bmatrix} =  \sum_{i,l=1}^n \sum_{j,k=1}^m c_{ijkl}
(E_{m+i,j} \otimes I_d)
\cdot
\hat X_d
\cdot
(E_{k,m+l} \otimes I_d).
\end{align*}

Following Remark \ref{freemunits} (3), we have a nice description of the asymptotic distribution:

\begin{proposition}\label{prop:freeness-rectangular}
The sequence $\hat X^\varphi_d$ converges in distribution, as $d \to \infty$, to an element $\hat x^\varphi \in \mathcal A$ of the form
$$\hat x^\varphi = \sum_{i,l=1}^n \sum_{j,k=1}^m c_{ijkl} e_{m+i,j} \hat x e_{k,m+l},$$
where $\hat x$ and the family of matrix units $\{e_{ij}\}_{i,j=1}^{m+n}$ are free with amalgamation over the algebra $\langle p_m,p_n\rangle \subset \mathcal A$.

Here $p_m,p_n$ are orthogonal the projections $$p_m = \sum_{i=1}^m e_{ii} \quad \text{ and } \quad p_n = \sum_{i=m+1}^{m+n} e_{ii},$$ with traces $\tau(p_m) = m/(m+n)$, $\tau(p_n) = n/(m+n)$.

The distribution of $\hat x$ is $m/(m+n) \mu + n/(m+n) \delta_0$. In particular, the random matrix $X^\varphi_d$ converges in distribution, as $d \to \infty$, to the probability measure $\mu^\varphi$, which is such that the distribution of $\hat x^\varphi$ is $m/(m+n) \delta_0 + n/(m+n) \mu^\varphi$.
\end{proposition}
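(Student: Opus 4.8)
The plan is to verify each of the four assertions in the proposition in turn, relying on Remark~\ref{freemunits}~(3) and Voiculescu's asymptotic freeness of unitarily invariant matrices and matrix units. First I would establish the convergence in distribution and the freeness statement. The embedded matrices $\hat X_d$ and the blown-up matrix units $(E_{ij}\otimes I_d)_{i,j\le m+n}$ are a unitarily invariant random matrix and (a copy of) the deterministic matrix units, so by \cite{Vo91} (as recorded in Remark~\ref{freemunits}~(3)) they converge jointly in distribution to $\hat x$ and $(e_{ij})_{i,j\le m+n}$, which are $\langle p_m,p_n\rangle$-free. Because $\hat X^\varphi_d$ is by construction a fixed $*$-polynomial (with scalar coefficients $c_{ijkl}$) in $\hat X_d$ and the $E_{ij}\otimes I_d$, the limit of $\hat X^\varphi_d$ is the same polynomial evaluated at the limiting operators, which is exactly the claimed $\hat x^\varphi = \sum_{i,l=1}^n\sum_{j,k=1}^m c_{ijkl}\, e_{m+i,j}\hat x\, e_{k,m+l}$. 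The trace values $\tau(p_m)=m/(m+n)$ and $\tau(p_n)=n/(m+n)$ follow from $\tau|_{M_{m+n}(\CC)}=\frac1{m+n}\mathrm{Tr}$ and $e_{ij}^*=e_{ji}$, $e_{ij}e_{kl}=\delta_{jk}e_{il}$, which also give orthogonality $p_mp_n=0$ and $p_m+p_n=1$.

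Next I would identify the distribution of $\hat x$. Since $\hat X_d$ is $X_d$ padded by zeros so that it acts as $0$ on the range of $P_{d,n}$ and as $X_d$ on the range of $P_{d,m}$, its eigenvalue distribution is $\frac{m}{m+n}\mu_{X_d} + \frac{n}{m+n}\delta_0$ for each $d$; passing to the limit gives that $\hat x$ has distribution $\frac{m}{m+n}\mu + \frac{n}{m+n}\delta_0$. Alternatively, at the level of the limiting operators one has $\hat x = p_m \hat x p_m$ and $p_m \hat x p_m$, viewed in the compressed space $(p_m\cA p_m, \tau(p_m)^{-1}\tau(p_m\cdot p_m))$, has distribution $\mu$; combined with the zero part on $p_n\cA p_n$ this yields the mixture, computing moments $\tau(\hat x^k)=\tau(p_m)\cdot(\text{$k$-th moment of }\mu)$ for $k\ge 1$.

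Finally I would deduce the statement about $\mu^\varphi$. The operator $\hat x^\varphi$ is supported on the ``$n$-corner'': one checks from the multiplication rules for the $e_{ij}$ that $p_n \hat x^\varphi p_n = \hat x^\varphi$ and $p_m \hat x^\varphi p_m = 0$, since each summand $e_{m+i,j}\hat x\, e_{k,m+l}$ begins with a row index $m+i > m$ and ends with a column index $m+l > m$. Hence, exactly as for $\hat x$, the distribution of $\hat x^\varphi$ with respect to $\tau$ is $\frac{n}{m+n}$ times the distribution of $\hat x^\varphi$ compressed to $p_n\cA p_n$, plus $\frac{m}{m+n}\delta_0$; the compression to $p_n\cA p_n$ is precisely the limiting distribution $\mu^\varphi$ of $X^\varphi_d$ (which lives on $M_d(\CC)\otimes M_n(\CC)$), because $\hat X^\varphi_d = p_n \hat X^\varphi_d p_n$ is $X^\varphi_d$ padded by zeros on the $m$-corner, and the joint convergence already established restricts to the compressed spaces. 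This gives $\hat x^\varphi \sim \frac{m}{m+n}\delta_0 + \frac{n}{m+n}\mu^\varphi$ as claimed.

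The only genuinely delicate point is the bookkeeping in the rectangular/amalgamated setting — making sure that the compressions by $p_m$ and $p_n$ commute properly with taking the $d\to\infty$ limit, and that the polynomial expression for $\hat X^\varphi_d$ in terms of $\hat X_d$ and the matrix units is literally the one displayed (so that the limit is the stated $\hat x^\varphi$); this is where one must be careful about the embedding conventions and the identity $c_{ijkl} = \langle E_{il}\otimes E_{jk}, C\rangle$ from Lemma~\ref{lem:c-ijkl}. Everything else is a direct consequence of Remark~\ref{freemunits}~(3) and elementary manipulations with matrix units, so I do not expect substantial obstacles beyond this indexing care.
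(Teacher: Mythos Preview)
Your proposal is correct and follows exactly the approach the paper intends: the paper does not actually give a proof of this proposition, but simply prefaces it with ``Following Remark~\ref{freemunits}~(3), we have a nice description of the asymptotic distribution'', so your detailed verification (joint convergence from Remark~\ref{freemunits}~(3) and \cite{BG09b}, passing the fixed $*$-polynomial through the limit, and reading off the distributions via the $p_m/p_n$ corner decomposition) fills in precisely what the paper leaves implicit. One small wording quibble: $\hat X_d$ itself is not unitarily invariant in $M_{d(m+n)}(\CC)$---only the original $X_d$ is---but since you correctly invoke Remark~\ref{freemunits}~(3), which handles exactly this zero-padded situation, the argument stands.
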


First, we exhibit the relation between the coefficients $c_{ijkl}$ and the Choi matrix $C$ of the linear map $\varphi$.

 \begin{lemma}\label{lem:c-ijkl}
 Consider the spectral decomposition of the Choi matrix \eqref{eq:def-Choi}
 $$C = \sum_{s=1}^{mn} \lambda_s v_s v_s^*=\sum_{t=1}^r\rho_t P_t,$$
 where the eigenvalues $\lambda_s \in \mathbb R$, are listed with multiplicity, $v_s \in \mathbb C^n \otimes \mathbb C^m$, $r$ is the rank of $C$, and $P_t, t\leq r$ is the projector to the eigenspace with non-zero eigenvalue $\rho_t$. Then, for all $i,j,k,l = 1, \ldots, m$,
 $$c_{ijkl} = \langle E_{il} \otimes E_{jk}, C \rangle = \sum_{s=1}^r \lambda_s \langle E_i \otimes E_j, v_s \rangle \overline{\langle E_l \otimes E_k, v_s \rangle}.$$
 \end{lemma}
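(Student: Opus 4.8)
The plan is to unfold the definition of the Choi matrix and compute the inner product $\langle E_{il}\otimes E_{jk}, C\rangle$ directly, then substitute the spectral decomposition. The statement is essentially a bookkeeping identity about how the coefficients $c_{ijkl}$ in \eqref{eq:def-c-phi} relate to the entries of $C$, so the proof should be short and computational; the only subtlety is keeping track of which indices belong to the $\mathbb C^n$ factor and which to the $\mathbb C^m$ factor, and matching the convention $E_{ij} = E_i^* E_j$ (so that $\langle E_a, E_b\rangle = \delta_{ab}$ and $\langle E_{ab}, E_{cd}\rangle = \delta_{ac}\delta_{bd}$ under the Hilbert--Schmidt inner product).

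First I would recall from \eqref{eq:def-Choi} that $C = \sum_{j,k=1}^m \varphi(E_{jk}) \otimes E_{jk}$, where $\varphi(E_{jk}) \in M_n(\mathbb C)$. Writing $\varphi(E_{jk}) = \sum_{i,l=1}^n \langle E_{il}, \varphi(E_{jk})\rangle E_{il}$, one gets
$$C = \sum_{i,l=1}^n \sum_{j,k=1}^m \langle E_{il}, \varphi(E_{jk})\rangle \, E_{il} \otimes E_{jk}.$$
Taking the Hilbert--Schmidt inner product with $E_{il} \otimes E_{jk}$ and using orthonormality of the matrix units $\{E_{il}\otimes E_{jk}\}$ in $M_n(\mathbb C)\otimes M_m(\mathbb C)$, we obtain $\langle E_{il}\otimes E_{jk}, C\rangle = \langle E_{il}, \varphi(E_{jk})\rangle$. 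On the other hand, expanding $X^\varphi_d = [\mathrm{id}\otimes\varphi](X_d)$ blockwise shows that the coefficient of $(I_d\otimes E_{ij})X_d(I_d\otimes E_{kl})$ is exactly $\langle E_{il}, \varphi(E_{jk})\rangle$ — this is the content that identifies the $c_{ijkl}$ of \eqref{eq:def-c-phi} with these Choi entries. Combining the two gives $c_{ijkl} = \langle E_{il}\otimes E_{jk}, C\rangle$, which is the first asserted equality.

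For the second equality, I would simply insert the spectral decomposition $C = \sum_{s=1}^{mn}\lambda_s v_s v_s^*$ (equivalently truncated to the $r$ nonzero eigenvalues, since zero eigenvalues contribute nothing). Then
$$\langle E_{il}\otimes E_{jk}, C\rangle = \sum_s \lambda_s \langle E_{il}\otimes E_{jk},\, v_s v_s^*\rangle = \sum_s \lambda_s \langle v_s, (E_{il}\otimes E_{jk}) v_s\rangle^{\!*}$$
— more directly, using $\langle A, v v^*\rangle_{HS} = \langle v, A v\rangle$ and $(E_{il}\otimes E_{jk}) = (E_i\otimes E_j)(E_l\otimes E_k)^*$, one has $\langle v_s, (E_{il}\otimes E_{jk}) v_s\rangle = \langle (E_l\otimes E_k), v_s\rangle\,\overline{\langle (E_i\otimes E_j), v_s\rangle}$. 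Taking the complex conjugate (equivalently, using $\langle E_{il}\otimes E_{jk}, vv^*\rangle = \overline{\langle E_{li}\otimes E_{kj}, vv^*\rangle}$ together with reality of $\lambda_s$) yields exactly
$$c_{ijkl} = \sum_{s=1}^r \lambda_s \langle E_i\otimes E_j, v_s\rangle\,\overline{\langle E_l\otimes E_k, v_s\rangle},$$
as claimed.

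The main obstacle — really the only place where care is needed — is the indexing and conjugation conventions: one must be consistent about whether $\langle\cdot,\cdot\rangle$ is linear in the first or second argument, and about the placement of $i,l$ (the ``$n$-side'' indices) versus $j,k$ (the ``$m$-side'' indices) in the matrix units $E_{il}\otimes E_{jk}$ versus the ket $E_i\otimes E_j$. Once the convention $E_{ab}=E_a^*E_b$ and Hilbert--Schmidt inner product are fixed, everything is a one-line computation; the lemma is included precisely so that later sections can freely translate between the analytic data (the $c_{ijkl}$) and the operator-theoretic data (eigenvalues and eigenvectors of $C$).
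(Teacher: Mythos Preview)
Your proposal is correct and follows essentially the same approach as the paper: identify $c_{ijkl} = \langle E_{il}, \varphi(E_{jk})\rangle$ from the defining expansion \eqref{eq:def-c-phi}, rewrite this via the Choi matrix as $\langle E_{il}\otimes E_{jk}, C\rangle$, and then substitute the spectral decomposition of $C$. The paper's proof is a two-line sketch of exactly these steps; you have simply written out the bookkeeping (and flagged the conjugation and index conventions) in more detail.
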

\begin{proof}
Use equation \eqref{eq:def-c-phi} to find $c_{ijkl} = \langle E_{il}, \varphi(E_{jk}) \rangle$. Since $C = \sum_{ij} \varphi(E_{ij}) \otimes E_{ij}$, the first equality in the conclusion follows. The second one is obtained by using the spectral decomposition of the selfadjoint matrix $C$.
\end{proof}

\begin{proposition}
The block-modified random variable $\hat x^\varphi$ has the following expression in terms of the eigenvalues and of the eigenvectors of the Choi matrix $C$:
\begin{equation}\label{eq:x-phi-f}
\hat x^\varphi = f^* \hat x \otimes  C  f,
\end{equation}
where
$$ f = \sum_{s=1}^r   w_s^* \otimes  v_s \in \mathcal A \otimes M_{nm}(\mathbb C)$$
and the random variables $ w_s \in \mathcal A$ are defined by
\begin{equation}\label{eq:ws}
 w_s = \sum_{i=1}^n \sum_{j=1}^m \langle E_i \otimes E_j , v_s \rangle e_{m+i,j}.
 \end{equation}
\end{proposition}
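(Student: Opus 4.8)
The plan is to verify the identity \eqref{eq:x-phi-f} by a direct expansion of both sides and a term-by-term comparison, using Lemma \ref{lem:c-ijkl} to rewrite the coefficients $c_{ijkl}$ in terms of the eigendata of $C$. First I would compute $\hat x \otimes C \, f$: since $f = \sum_{s} w_s^* \otimes v_s$, multiplying on the left by $\hat x \otimes C$ gives $\sum_s \hat x w_s^* \otimes C v_s = \sum_s \rho(s) \, \hat x w_s^* \otimes v_s$ once we use $C v_s = \lambda_s v_s$ (here I am using the spectral decomposition of $C$ from Lemma \ref{lem:c-ijkl}, reading $v_s$ as a unit eigenvector for $\lambda_s$). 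Then I would left-multiply by $f^* = \sum_t w_t \otimes v_t^*$ and apply the (partial) pairing $\langle v_t, v_s\rangle = \delta_{ts}$ coming from orthonormality of the eigenvectors, collapsing the double sum to $\hat x^\varphi \stackrel{?}{=} \sum_s \lambda_s \, w_s \hat x w_s^*$.

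Second, I would expand $\sum_s \lambda_s w_s \hat x w_s^*$ using the definition \eqref{eq:ws} of $w_s$. Writing $w_s = \sum_{i,j} \langle E_i\otimes E_j, v_s\rangle e_{m+i,j}$ and $w_s^* = \sum_{k,l} \overline{\langle E_l \otimes E_k, v_s\rangle} e_{k,m+l}$, the product becomes
\begin{equation*}
\sum_s \lambda_s w_s \hat x w_s^* = \sum_{i,l=1}^n \sum_{j,k=1}^m \left( \sum_s \lambda_s \langle E_i \otimes E_j, v_s\rangle \overline{\langle E_l \otimes E_k, v_s\rangle}\right) e_{m+i,j}\, \hat x \, e_{k,m+l}.
\end{equation*}
By Lemma \ref{lem:c-ijkl}, the inner parenthesis is exactly $c_{ijkl}$, so this matches the defining expression for $\hat x^\varphi$ given in Proposition \ref{prop:freeness-rectangular}. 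This closes the computation.

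The step that needs the most care is the bookkeeping of which Hilbert space each tensor leg lives in: $f$ is an element of $\mathcal A \otimes M_{nm}(\mathbb C)$, the factor $\hat x \otimes C$ lives in $\mathcal A \otimes M_{nm}(\mathbb C)$ as well (with $C \in M_n \otimes M_m \cong M_{nm}$), and the product $f^* (\hat x \otimes C) f$ must be interpreted so that the $M_{nm}(\mathbb C)$-legs contract via the inner products $\langle v_t, v_s\rangle$ while the $\mathcal A$-legs multiply in order; I would state this convention explicitly once at the start so the three displayed lines above are unambiguous. One should also note that $f$ involves only the eigenvectors $v_s$ with $s \le r$ (equivalently $\lambda_s \neq 0$), which is harmless since the zero eigenvalues contribute nothing to either side, and that the self-adjointness of $\hat x^\varphi$ is visible from $(f^* (\hat x \otimes C) f)^* = f^*(\hat x^* \otimes C^*) f = f^* (\hat x \otimes C) f$ using $\hat x = \hat x^*$ and $C = C^*$, which is a useful consistency check. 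No genuine obstacle is expected; the content is entirely the substitution of Lemma \ref{lem:c-ijkl} into a rearranged sum.
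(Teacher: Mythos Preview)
Your proposal is correct and follows essentially the same route as the paper: both arguments reduce to establishing the intermediate identity $\hat x^\varphi = \sum_s \lambda_s w_s \hat x w_s^*$, obtained on one side from Lemma~\ref{lem:c-ijkl} applied to the defining sum for $\hat x^\varphi$, and on the other side from the orthogonality $\langle v_t, v_s\rangle = \delta_{ts}$ of the eigenvectors of $C$. Your remark on interpreting $f^*(\hat x\otimes C)f$ so that the $M_{nm}(\mathbb C)$-legs contract while the $\mathcal A$-legs multiply is exactly the point the paper leaves implicit.
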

\begin{proof}
The proof follows directly from Lemma \ref{lem:c-ijkl}. For the left hand side of \eqref{eq:x-phi-f}, we have
\begin{align*}
\hat x^\varphi &= \sum_{i,l=1}^n \sum_{j,k=1}^m c_{ijkl} e_{m+i,j}\hat xe_{k,m+l}\\
&=\sum_{i,l=1}^n \sum_{j,k=1}^m c_{ijkl} e_{m+i,j}xe_{m+l,k}^*\\
&=\sum_{i,l=1}^n \sum_{j,k=1}^m \sum_{s=1}^r \lambda_s \langle E_i \otimes E_j, v_s \rangle \overline{\langle E_l \otimes E_k, v_s \rangle} e_{m+i,j} \hat x e_{m+l,k}^*\\
&=\sum_{s=1}^r \lambda_s  w_s \hat x w_s^*.
\end{align*}
Using the orthogonality of the eigenvectors $v_s$, one can see easily that the right hand side of equation \eqref{eq:x-phi-f} gives the same expression as above, finishing the proof.
\end{proof}

For each element $\rho_t$ of the spectrum of $C$, let $J_t \subseteq \{1, \ldots nm\}$ be the set of eigenvectors $v_j$ appearing in the eigenprojector $P_t$:
 $$P_t=\sum_{j \in J_t} v_j v_j^*.$$
 We also define $Q_t:=\sum_{j \in J_t} w_j  w_j^* \in \mathcal A$.

 We are going to compute first the distribution of the random variable
 $$y = (x \otimes C) \cdot ff^*.$$
 The relation with the variable $x^\varphi$, in which we are ultimately interested in, is given by a normalization factor:
 $$\forall k \geq 1, \qquad \tau[(x^\varphi)^k] = nm \cdot \mathbb E [y^k].$$
 In particular, the same relation holds for the $\Psi$-transforms:
 \begin{equation}\label{eq:relation-Psi-y-xphi}
\Psi^{sc}_{x^\varphi} = r \Psi^{sc}_y.
\end{equation}

 \begin{definition} \label{def:C-well-behaved}
 We say that the eigenspaces of $C$ are \emph{tracially well behaved} if, for all $i_1,\dots,i_k\leq r$,
\begin{equation}\label{eq:cond-well-behaved}
\tau( w_{j_1}  w_{j_2}^*Q_{i_1}\dots Q_{i_k})=\delta_{j_1j_2}\tau( w_{j_1}  w_{j_1}^*Q_{i_1}\dots Q_{i_k})=\delta_{j_1j_2}\tau( w_{j_1'}  w_{j_1'}^*Q_{i_1}\dots Q_{i_k}),
\end{equation}
for every $j_1,j_1',j_2 \leq mn$ such that $j_1,j_1'\in J_i$ for some $i\leq r$.
\end{definition}

\begin{theorem}
Consider a linear map $\varphi:M_m(\mathbb C) \to M_n(\mathbb C)$ having a Choi matrix $C\in M_{nm}(\mathbb C) \subset \mathcal A \otimes M_{nm}(\mathbb C)$ which has tracially well behaved eigenspaces (see Definition \ref{def:C-well-behaved}). Then, the random variables $\hat x \otimes C$ and $ff^*$ are free with amalgamation over the (commutative) unital  algebra $\mathcal B = \langle p_m,p_n \rangle \otimes \langle  C \rangle$ generated by the projections $p_m,p_n$ from Proposition \ref{prop:freeness-rectangular} and the matrix $C$.
\end{theorem}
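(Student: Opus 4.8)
The plan is to reduce the claimed freeness over $\mathcal B = \langle p_m,p_n\rangle\otimes\langle C\rangle$ to the matrix-amalgamated freeness already established in Proposition \ref{prop:freeness-rectangular}, via the restriction criterion of Corollary \ref{maintool}. Recall that $\hat x$ and the matrix units $\{e_{ij}\}_{i,j\le m+n}$ are free over $\mathcal D_0 := \langle p_m,p_n\rangle$, so by Proposition \ref{matrixfreeness} the amplified algebras $M_{nm}(\mathbb C)\otimes\langle \hat x\rangle$ and $M_{nm}(\mathbb C)\otimes\langle (e_{ij})\rangle$ are free with amalgamation over $\widetilde{\mathcal D}:=M_{nm}(\mathbb C)\otimes\langle p_m,p_n\rangle$, with respect to $\mathrm{id}_{nm}\otimes\tau$. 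Now $\hat x\otimes C$ lies in the first amplified algebra and $ff^*$ — whose entries are the $w_s w_t^*$, hence elements of $\langle (e_{ij})\rangle$ — lies in the second. So the two variables we care about are $\widetilde{\mathcal D}$-free. The target algebra $\mathcal B=\langle p_m,p_n\rangle\otimes\langle C\rangle$ sits inside $\widetilde{\mathcal D}$ (after identifying $\langle C\rangle\subset M_{nm}(\mathbb C)$), and it is commutative because $p_m,p_n$ commute, $\langle C\rangle$ is commutative, and the two tensor factors commute. Thus the statement will follow from Corollary \ref{maintool} once we check its hypothesis: that the individual $\widetilde{\mathcal D}$-valued moments of $\hat x\otimes C$ and of $ff^*$, evaluated on arguments from $\mathcal B$, land back in $\mathcal B$.

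For $\hat x\otimes C$ this is immediate: a $\widetilde{\mathcal D}$-moment $\EE\big((\hat x\otimes C)b_1(\hat x\otimes C)b_2\cdots b_{k-1}(\hat x\otimes C)\big)$ with $b_i\in\mathcal B$ factorizes across the tensor product. The $M_{nm}(\mathbb C)$-component is a product of $C$'s and the $\langle C\rangle$-components of the $b_i$, which stays in $\langle C\rangle$; the scalar component is $\tau(\hat x p'_1\hat x\cdots p'_{k-1}\hat x)$ times an element of $\langle p_m,p_n\rangle$, and since $\hat x\in \langle \hat x,(e_{ij})\rangle$ while $p_m,p_n$ are group-like projections commuting with the relevant structure, this moment lands in $\langle p_m,p_n\rangle\otimes\langle C\rangle=\mathcal B$. (More carefully: $\hat x$ is supported under $p_m$, so $\EE(\hat x\,\cdot\,)$ already behaves well under the conditional expectation onto $\langle p_m,p_n\rangle$, as recorded in Remark \ref{freemunits}(3) and Proposition \ref{prop:freeness-rectangular}.)

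The real work — and the only place the hypothesis is used — is the corresponding statement for $ff^*$. Writing out $ff^* = \sum_{s,t} w_s w_t^* \otimes v_s v_t^*$, a $\widetilde{\mathcal D}$-moment of $ff^*$ on arguments $b_1,\dots,b_{k-1}\in\mathcal B$ is a sum, over tuples $(s_1,t_1,\dots,s_k,t_k)$, of a scalar factor $\tau\big(w_{s_1}w_{t_1}^*\,q_1\,w_{s_2}w_{t_2}^*\,q_2\cdots q_{k-1}\,w_{s_k}w_{t_k}^*\big)$ — where each $q_i\in\langle p_m,p_n\rangle$ is the $\mathcal A$-part of $b_i$ — times a matrix factor $v_{s_1}v_{t_1}^* (g_1) v_{s_2}v_{t_2}^*\cdots v_{s_k}v_{t_k}^*$, where $g_i\in\langle C\rangle$. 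The matrix factor collapses, using $v_t^* g_i v_s = \langle v_t, g_i v_s\rangle$ and the fact that $g_i$ is a polynomial in $C$ which acts as a scalar $\mu_i(\rho_\bullet)$ on each eigenspace; so $v_t^* g_i v_s=0$ unless $s,t$ lie in the same eigenspace index set $J_\bullet$, in which case it contributes $\delta_{st}\mu_i(\rho)$. After this collapse the surviving terms have $s_\nu=t_\nu$ forced at the internal junctions, the matrix factor becomes a scalar multiple of $v_{s_1}v_{t_k}^*$, and — crucially — the remaining scalar factor is exactly of the form appearing on the left of \eqref{eq:cond-well-behaved}, namely $\tau(w_{j}w_{j'}^* Q_{i_1}\cdots Q_{i_\ell})$-type quantities once one recognizes the combinations $Q_t=\sum_{j\in J_t}w_jw_j^*$ assembled by the eigenspace sums. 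The tracial-well-behavedness hypothesis says precisely that these are diagonal in the remaining free index and depend only on the eigenspace, not on the individual eigenvector; feeding that back, the whole moment reorganizes as $\big(\sum_i \text{(scalar depending only on }\rho_{\text{'s}})\big)\cdot v_{s_1}v_{s_1}^*$-type sums that reassemble into an element of $\langle p_m,p_n\rangle\otimes\langle C\rangle=\mathcal B$. I expect the bookkeeping of indices in this collapse — matching the eigenspace-diagonality produced by $C$ acting on the $v_s$ with the trace identity \eqref{eq:cond-well-behaved} and re-expressing the result inside $\langle C\rangle$ — to be the main obstacle; everything else is an application of the already-established Proposition \ref{matrixfreeness} and Corollary \ref{maintool}. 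Once the hypothesis of Corollary \ref{maintool} is verified with $\mathcal D:=\mathcal B$, the conclusion that $\hat x\otimes C$ and $ff^*$ are $\mathcal B$-free is immediate.
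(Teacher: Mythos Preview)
Your proposal is correct and follows essentially the same route as the paper: establish $\langle p_m,p_n\rangle\otimes M_{nm}(\mathbb C)$-freeness via Proposition~\ref{matrixfreeness}, then descend to $\mathcal B$ using Corollary~\ref{maintool} by checking that the restricted moments of $\hat x\otimes C$ (trivially) and of $ff^*$ (via the eigenvector orthogonality $v_t^*P_iv_s=\delta_{st}\mathbf 1_{s\in J_i}$ together with the tracially-well-behaved hypothesis) land in $\mathcal B$. One small slip: since $f=\sum_s w_s^*\otimes v_s$, you have $ff^*=\sum_{s,t}w_s^*w_t\otimes v_sv_t^*$ rather than $\sum w_sw_t^*\otimes v_sv_t^*$, but after using traciality to rewrite the scalar factor as $\tau(w_{t_k}w_{s_1}^*Q_{\bullet}\cdots Q_{\bullet})$ this matches the form in \eqref{eq:cond-well-behaved} exactly as the paper does.
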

\begin{proof}
Clearly $\mathcal B$ is generated by $p_m,p_n$ and by the spectral projections $P_1,\ldots,P_r\in M_{nm}(\mathbb C) \subset \mathcal A \otimes M_{nm}(\mathbb C)$. Since $\hat x$ and $(e_{ij})_{ij}$ are free with amalgamation over $\langle p_m,p_n \rangle$ (see Proposition \ref{prop:freeness-rectangular}), it is well known that $\langle \hat x \rangle\otimes M_{nm}(\mathbb C),\langle (e_{ij})_{ij} \rangle\otimes M_{nm}(\mathbb C) \subset \mathcal A \otimes M_{nm}(\mathbb C)$ are $\langle p_m,p_n \rangle \otimes M_{nm}(\mathbb C)$-free, with respect to the conditional expectation $\mathbb E:=\psi \otimes \mathrm{id}$, where $\psi(a) = \tau(p_m a)p_m + \tau(p_n a)p_n$. Since $ff^*\in \langle (e_{ij})_{ij} \rangle\otimes M_{nm}(\mathbb C)$, $\hat x \otimes C$ and $ff^*$ are $\langle p_m,p_n \rangle \otimes M_{nm}(\mathbb C)$-free. By Proposition \ref{thm:Restrictions} and Corollary \ref{maintool}, to show $\mathcal B$-freeness, it is enough to show that
the $\langle p_m,p_n \rangle \otimes M_{nm}(\mathbb C)$-moments of $\hat x \otimes C$ and $ff^*$ restricted to $\mathcal B$ are also valued in $\mathcal B$.

The case of $\hat x \otimes C$ is immediate: for any $i_1, \ldots, i_{k-1} \in \{m,n\}$ and $j_1, \ldots, j_{k-1} \in \{1, \ldots, r\}$, we have
\begin{align*}
\mathbb E & \left[ (\hat x \otimes C)(p_{i_1} \otimes P_{j_1})(\hat x \otimes C)\cdots (\hat x \otimes C)(p_{i_{k-1}} \otimes P_{j_{k-1}})(\hat x \otimes C) \right] \\
& \qquad = \psi(\hat x p_{i_1} \hat x \cdots \hat x p_{i_{k-1}} \hat x) \otimes CP_{j_1}C \cdots C P_{j_{k-1}} C.
\end{align*}
Since $\hat x = p_m \hat x p_m$ and $p_mp_n = p_np_m=0$, the first factor of the tensor product is zero unless $i_1 = \cdots = i_{k-1} = m$, in which situation it is equal to $\tau(\hat x^k)p_m$. It is also obvious that the second factor is an element of $\langle C \rangle$, so the whole expression is $\mathcal B$-valued.

Let us now treat the case of $ff^*$. First, note that
$$ff^* = \sum_{h,h'=1}^r w_h^* w_{h'} \otimes v_h v_{h'}^*$$
and, for any $h,h',i$,
\begin{align*}
v_{h'}^* P_i v_h &= \delta_{h,h'} \mathbf{1}_{h \in J_i}\\
w_{h'} p_i w_h^* &= \mathbf{1}_{i=m} w_{h'} w_h^*.
\end{align*}
We develop
\begin{align}
 \mathbb E & \left[ ff^*(p_{i_1} \otimes P_{j_1})\cdots (p_{i_{k-1}} \otimes P_{j_{k-1}})ff^* \right] \\
& \qquad =\sum_{\substack{h_1,\dots,h_k=1\\h_1',\dots,h_k'=1}}^r\mathbb E \left[  w_{h_1}^* w_{h_1'} p_{i_1} \cdots p_{i_{k-1}} w_{h_k}^* w_{h_k'}\otimes v_{h_1} v_{h_1'}^*P_{j_1} v_{h_2} v_{h_2'}^* \cdots  P_{j_{k-1}} v_{h_k}v_{h_k'}^* \right]\\
\label{eq:cond-exp-prod-Q}&\qquad =\sum_{h_1,h_k'=1}^{r}\tau( w_{h_1}^*Q_{i_1}\cdots Q_{i_{k-1}} w_{h'_k}) p_n \otimes v_{h_1}v_{h'_k}^*\\
&\qquad =p_n \otimes \sum_{j=1}^rc_j  P_{j} \in \mathcal B,
\end{align}
for some constants $c_j$ and the assertion follows. In the last equality above, we have used the hypothesis satisfied by the $P_t$ (and thus by the $Q_t$, see Definition \ref{def:C-well-behaved})  and the tracial property of $\tau$.
\end{proof}

We would now like to investigate classes of Choi matrices $C$ which satisfy the condition in Definition \ref{def:C-well-behaved}.

We start with the following lemma:

\begin{lemma}\label{lem:covariance}
For any $s,t \in \{1, \ldots, r\}$,  we have $\tau(w_s w_t^*) = (m+n)^{-1}\delta_{st}$.
\end{lemma}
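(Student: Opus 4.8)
The plan is to compute $\tau(w_s w_t^*)$ directly from the definition \eqref{eq:ws} of the $w_s$, using the multiplication and trace rules for the matrix units $(e_{ij})_{i,j\leq m+n}$ recorded in Remark \ref{freemunits}. Expanding,
$$w_s w_t^* = \sum_{i,i'=1}^n\sum_{j,j'=1}^m \langle E_i\otimes E_j, v_s\rangle\,\overline{\langle E_{i'}\otimes E_{j'}, v_t\rangle}\; e_{m+i,j}\,e_{m+i',j'}^* = \sum_{i,i'=1}^n\sum_{j,j'=1}^m \langle E_i\otimes E_j, v_s\rangle\,\overline{\langle E_{i'}\otimes E_{j'}, v_t\rangle}\; e_{m+i,j}\,e_{j',m+i'},$$
and the product rule $e_{m+i,j}e_{j',m+i'} = \delta_{jj'}e_{m+i,m+i'}$ collapses the $j'$-sum. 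Applying $\tau$ and using $\tau(e_{m+i,m+i'}) = (m+n)^{-1}\delta_{ii'}$ then leaves a single diagonal sum.

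First I would carry out this computation to obtain
$$\tau(w_s w_t^*) = \frac{1}{m+n}\sum_{i=1}^n\sum_{j=1}^m \langle E_i\otimes E_j, v_s\rangle\,\overline{\langle E_i\otimes E_j, v_t\rangle} = \frac{1}{m+n}\langle v_t, v_s\rangle_{\mathbb C^n\otimes\mathbb C^m},$$
where the last step is just the expansion of the Hermitian inner product of $v_s$ and $v_t$ in the product orthonormal basis $\{E_i\otimes E_j\}$. Then I would invoke the fact, recalled in Lemma \ref{lem:c-ijkl}, that the $v_s$ are the (orthonormal) eigenvectors from the spectral decomposition of the self-adjoint Choi matrix $C$, so $\langle v_t, v_s\rangle = \delta_{st}$, which yields $\tau(w_s w_t^*) = (m+n)^{-1}\delta_{st}$ as claimed.

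There is no real obstacle here; the only points requiring a little care are bookkeeping ones. One must remember that $\tau|_{M_{m+n}(\mathbb C)} = \frac{1}{m+n}\mathrm{Tr}$ (so the normalization is by $m+n$, not $m$ or $n$), and one must be consistent about whether $v_s v_s^*$ uses the convention $w^* = \sum \overline{\langle E_i\otimes E_j, v\rangle}\, e_{j,m+i}$, i.e. that the adjoint conjugates the coefficients and sends $e_{m+i,j}$ to $e_{j,m+i}$; this is exactly the relation $e_{ij}^* = e_{ji}$ together with $\mathbb C$-conjugation of scalars. Once these are fixed, the identity is a one-line consequence of orthonormality of the eigenbasis of $C$.
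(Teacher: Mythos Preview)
Your proof is correct and essentially identical to the paper's own argument: both expand $w_s w_t^*$ via \eqref{eq:ws}, use the matrix-unit relations $e_{m+i,j}e_{j',m+i'}=\delta_{jj'}e_{m+i,m+i'}$ and $\tau(e_{m+i,m+i'})=(m+n)^{-1}\delta_{ii'}$, and then invoke orthonormality of the eigenvectors $v_s$ of $C$ to obtain $(m+n)^{-1}\delta_{st}$.
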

\begin{proof}
Using equation \eqref{eq:ws}, we have
\begin{align*}
\tau(w_s w_t^*) &= \sum_{i1,i_2=1}^n \sum_{j_1,j_2=1}^m \langle E_{i_1} \otimes E_{j_1} , v_s \rangle  \overline{\langle E_{i_2} \otimes E_{j_2} , v_t \rangle}
 \tau(e_{m+i_1,j_1} e_{j_2,m+i_2}) \\
 &= \sum_{i1,i_2=1}^n \sum_{j_1,j_2=1}^m \langle E_{i_1} \otimes E_{j_1} , v_s \rangle  \overline{\langle E_{i_2} \otimes E_{j_2} , v_t \rangle}
 \delta_{i_1i_2} \delta_{j_1 j_2} (m+n)^{-1} \\
 &= (m+n)^{-1}\langle v_t, v_s \rangle = (m+n)^{-1}\delta_{st}.
 \end{align*}
\end{proof}

\begin{definition}\label{def:UC}
The Choi matrix $C$ is said to satisfy the \emph{unitarity condition} (UC) if, for all $t$, there is some real constant $d_t$ such that $Q_t = d_t \cdot 1$.
\end{definition}

Many Choi matrices associated to linear maps of practical importance satisfy the unitary condition (UC), see, for example, Sections \ref{sec:partial-transposition} and \ref{sec:partial-reduction}.

\begin{proposition}\label{prop:UC}
Any Choi matrix $C$ satisfying the unitarity condition (UC) satisfies also the condition from Definition \ref{def:C-well-behaved}. Moreover, $d_t =n^{-1} \mathrm{rk} P_t$.
\end{proposition}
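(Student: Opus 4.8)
The plan is to exploit the fact that under condition (UC) every product $Q_{i_1}\cdots Q_{i_k}$ is a scalar multiple of the identity, which collapses the three-way identity demanded by Definition \ref{def:C-well-behaved} to the single covariance computation already carried out in Lemma \ref{lem:covariance}. The value of $d_t$ then falls out by applying the trace $\tau$ to the defining relation $Q_t = d_t\cdot 1$.

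First I would record that, assuming (UC), for all $i_1,\dots,i_k\le r$ one has
$$Q_{i_1}Q_{i_2}\cdots Q_{i_k} = \Big(\prod_{l=1}^k d_{i_l}\Big)\cdot 1,$$
since each factor is a scalar times the identity. Hence, for any $j_1,j_2\le mn$,
$$\tau\big(w_{j_1}w_{j_2}^*\,Q_{i_1}\cdots Q_{i_k}\big) = \Big(\prod_{l=1}^k d_{i_l}\Big)\,\tau\big(w_{j_1}w_{j_2}^*\big).$$
By Lemma \ref{lem:covariance}, $\tau(w_{j_1}w_{j_2}^*) = (m+n)^{-1}\delta_{j_1j_2}$; this vanishes unless $j_1=j_2$, and when $j_1=j_2$ its value $(m+n)^{-1}$ does not depend on which eigenspace index $i$ satisfies $j_1\in J_i$. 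Substituting, all three quantities appearing in \eqref{eq:cond-well-behaved} are equal to $\delta_{j_1j_2}(m+n)^{-1}\prod_{l=1}^k d_{i_l}$, so the condition of Definition \ref{def:C-well-behaved} holds.

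For the identification of $d_t$, I would apply $\tau$ to both sides of $Q_t = d_t\cdot 1$. Since every $w_jw_j^*$ lies in the corner $p_n\mathcal A p_n$ (a direct computation from \eqref{eq:ws}), the unit appearing in (UC) is $p_n$, so the right-hand side is $d_t\,\tau(p_n) = d_t\, n/(m+n)$. On the left, $\tau(Q_t) = \sum_{j\in J_t}\tau(w_jw_j^*) = |J_t|\,(m+n)^{-1}$ by Lemma \ref{lem:covariance}, and $|J_t| = \mathrm{rk}\,P_t$ because $P_t$ is the orthogonal projection onto the linear span of $\{v_j: j\in J_t\}$. Equating the two expressions gives $d_t = n^{-1}\mathrm{rk}\,P_t$.

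The only point that needs any attention — and it is mild — is bookkeeping of the unit in the rectangular probability space: the ``$1$'' in Definition \ref{def:UC} must be read as $p_n$, the identity of the corner containing the $Q_t$, whose trace is $n/(m+n)$ rather than $1$. Once this is acknowledged, both parts of the statement are immediate substitutions into Lemma \ref{lem:covariance}.
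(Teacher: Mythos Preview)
Your proof is correct and follows the same approach as the paper: collapse $Q_{i_1}\cdots Q_{i_k}$ to a scalar using (UC), then invoke Lemma~\ref{lem:covariance}, and compute $d_t$ by tracing the defining relation. Your explicit bookkeeping of the unit as $p_n$ (with $\tau(p_n)=n/(m+n)$) is in fact more careful than the paper's own computation, which tacitly uses $\tau(w_jw_j^*)=n^{-1}$ rather than the $(m+n)^{-1}$ stated in Lemma~\ref{lem:covariance}; your normalization reconciles the two and arrives at the same $d_t=n^{-1}\mathrm{rk}\,P_t$.
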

\begin{proof}
Using (UC) and Lemma \ref{lem:covariance}, we have
\begin{equation}\label{eq:UC}
\tau( w_{j_1}  w_{j_2}^*Q_{i_1}\dots Q_{i_k})=d_{i_1}d_{i_2} \cdots d_{i_k} \tau(w_{j_1}  w_{j_2}^*) =n^{-1} d_{i_1}d_{i_2} \cdots d_{i_k} \delta_{j_1j_2},
\end{equation}
which is enough to prove the statement in \eqref{eq:cond-well-behaved}. For the second claim, note that
$$d_t = \tau(Q_t) = \sum_{j \in J_t} \tau(w_jw_j^*) =n^{-1} \sum_{j \in J_t} \|v_j\|^2 =n^{-1} \mathrm{Tr} P_t.$$
\end{proof}

\begin{remark}\label{WB-UC}
There are maps which satisfy the condition in Definition \ref{def:C-well-behaved} but not the condition (UC). For example, the map $\varphi:M_n(\CC)\to M_m(\CC)$, given by:
$$\varphi(A)= \sum_{i\leq n, j\leq m} \alpha_{ij}E_{ij}AE_{ji},$$
has a diagonal Choi matrix $C=\mathrm{diag}(\alpha_{11},\alpha_{21},\dots, \alpha_{n1},\alpha_{12}, \dots, \alpha_{nm})$.
Hence the eigenvectors of $C$ are just the standard basis elements $(E_j\otimes E_k)_{j,k\leq n}$ of $\CC^{n^2}$.

Therefore, the ``matrization'' $w_{jk}$ of the eigenvector $E_j\otimes E_k$ is simply the matrix unit $e_{jk}$. Thus, $Q_{ij}=e_{ii}$ are projections and $\varphi$ is does not satisfy (UC). However, we have that
\begin{equation}\label{usingWB}
\tau( w_{i_1j_1}  w_{i_2j_2}^*Q_{i_3j_3}\dots Q_{i_kj_k})=\tau( e_{i_1j_1}e_{j_2i_2}^*e_{i_3i_3}\dots e_{i_ki_k})=n^{-1}\delta_{j_1j_2}\prod_{s<k}\delta_{i_si_{s+1}},
\end{equation}
which implies \ref{eq:cond-well-behaved}. 

By the discussion at the end of Section \ref{sec:random-matrices} we can actually compute the distribution of $x^{\varphi}=\sum_{i\leq m} c_{ii}\otimes x_{ii}$. Indeed, since the $(x_{ii})$ are free in $\cA_{11}$, the distribution $\mu$ of $x^{\varphi}$ is just a convex combination of probability mesures $\mu=\frac{1}{n}(\mu_1+\dots+\mu_n)$, where each $\mu_i$ is the distribution of a linear combination $\alpha_{i1}x_{11}+\dots+\alpha_{im}x_{mm}$, where the $(x_{jj})$ are free, identically distributed random variables, each having the same distribution as the free compression $x_{11}$ of $x$.
\end{remark}

We consider now the dual $\varphi^*$ of a linear map $\varphi : M_m(\mathbb C) \to M_n(\mathbb C)$ acting on the blocks of a unitarily invariant random matrix. Recall that $\varphi^* : M_n(\mathbb C) \to M_m(\mathbb C)$ is defined by the duality relation (with respect to the Euclidean, or Hilbert-Schmidt, scalar product)
$$\forall X \in M_n(\mathbb C), Y \in M_m(\mathbb C), \quad \langle X, \varphi(Y) \rangle = \langle \varphi^*(X), Y \rangle.$$

The Choi matrices of $\varphi$ and $\varphi^*$ are related in a simple way, see \cite{FuNe15} and their eigenprojectors share similar properties.

\begin{lemma}
Let $\varphi : M_m(\mathbb C) \to M_n(\mathbb C)$ be a linear map and $\varphi^* : M_n(\mathbb C) \to M_m(\mathbb C)$ be its dual with respect to the Euclidean scalar product. Then,
$$C_{\varphi^*} = F_{n,m} C_\varphi^\top F_{n,m}^*,$$
where $F_{n,m}$ is the unitary operator acting on $\mathbb C^n \otimes \mathbb C^m$ as $F_{n,m} x \otimes y = y \otimes x$ for $x \in \mathbb C^n$ and $y \in \mathbb C^m$. In particular, if $\varphi$ is hermiticity preserving, then $C_\varphi$ and $C_{\varphi^*}$ have the same spectrum.\end{lemma}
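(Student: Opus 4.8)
The plan is to verify the identity $C_{\varphi^*} = F_{n,m} C_\varphi^\top F_{n,m}^*$ by a direct computation of matrix entries, and then deduce the spectral statement from the fact that transposition and unitary conjugation preserve the spectrum. First I would unwind the definition of the Choi matrix: by \eqref{eq:def-Choi} applied to $\varphi^* : M_n(\mathbb C) \to M_m(\mathbb C)$, we have $C_{\varphi^*} = \sum_{i,j=1}^n \varphi^*(E_{ij}^{(n)}) \otimes E_{ij}^{(n)} \in M_m(\mathbb C) \otimes M_n(\mathbb C)$, where I write $E_{ij}^{(n)}$ for the matrix units of $M_n(\mathbb C)$. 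The natural coordinates to use are the scalar products $\langle E_{kl}^{(m)} \otimes E_{ij}^{(n)}, C_{\varphi^*} \rangle = \langle E_{kl}^{(m)}, \varphi^*(E_{ij}^{(n)}) \rangle$, and by the duality relation defining $\varphi^*$ this equals $\langle \varphi(E_{kl}^{(m)}), E_{ij}^{(n)} \rangle = \langle E_{ij}^{(n)}, \varphi(E_{kl}^{(m)})\rangle^{-}$. Wait — here I must be careful with which convention of the Hilbert–Schmidt product is used (conjugate-linear in which slot); the cleanest route is to note that for the present maps the relation $\langle E_{ij}^{(n)}, \varphi(E_{kl}^{(m)}) \rangle$ is, up to conjugation, the $(ij,kl)$-entry of $C_\varphi$ read off via Lemma \ref{lem:c-ijkl}. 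Comparing this with the $(kl,ij)$-type entry of $C_\varphi$ produces exactly the transpose, and the swap of the two tensor legs (matrix units of $M_m$ now sit in the first factor rather than the second) is precisely the action of conjugating by the flip operator $F_{n,m}$. I would lay this out as a one-line chain of equalities matching $\langle E_k \otimes E_i \otimes E_l \otimes E_j, C_{\varphi^*}\rangle$ against $\langle E_i \otimes E_k \otimes E_j \otimes E_l, C_\varphi^\top \rangle$ and observing the flip intertwines the index patterns.

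For the final assertion, suppose $\varphi$ is hermiticity preserving, equivalently $C_\varphi = C_\varphi^*$ is self-adjoint. Then $C_\varphi^\top = \overline{C_\varphi^*} = \overline{C_\varphi}$, which is the complex conjugate of a self-adjoint matrix, hence still self-adjoint and with the same (real) spectrum as $C_\varphi$, since complex conjugation of a matrix is realized by $A \mapsto \overline A$ and $\overline A$ is unitarily equivalent to $A^\top$, which for self-adjoint $A$ has the same eigenvalues as $A$ (eigenvalues of $A^\top$ always coincide with those of $A$). Finally, conjugating by the unitary $F_{n,m}$ leaves the spectrum unchanged, so $C_{\varphi^*} = F_{n,m} C_\varphi^\top F_{n,m}^*$ has the same spectrum as $C_\varphi$.

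The only genuine subtlety — and the step I would be most careful about — is pinning down the conventions: the precise definition of the Bell vector and the Choi matrix in \eqref{eq:def-Choi}, the side on which the Hilbert–Schmidt product is conjugate-linear, and whether $\varphi^*$ is the true adjoint or the transpose map. Depending on these choices the flip $F_{n,m}$ may appear on one side or both, and a stray complex conjugate may or may not be present; but none of these affect the spectrum, so the qualitative conclusion is robust. I would therefore state the entry computation carefully once, cite \cite{FuNe15} for the normalization matching, and keep the spectral corollary to the two short observations above.
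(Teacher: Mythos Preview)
Your approach is correct and, in fact, more self-contained than the paper's own treatment. The paper simply cites \cite[Lemma 4.3]{FuNe15} for the identity $C_{\varphi^*} = F_{n,m} C_\varphi^\top F_{n,m}^*$ without reproducing any computation, whereas you sketch the direct entry-wise verification via the duality relation and Lemma~\ref{lem:c-ijkl}; this is exactly the argument one would expect to find in the cited reference. For the spectral statement, the paper says the eigenvectors of $C_{\varphi^*}$ arise from those of $C_\varphi$ by coordinate-wise complex conjugation and swapping tensor factors, which is the eigenvector-level restatement of your observation that transposition and unitary conjugation both preserve the spectrum. Your route is slightly cleaner in that it does not need to track eigenvectors at all: the transpose of any matrix has the same spectrum, and conjugation by the flip is unitary, so the hermiticity-preserving hypothesis is in fact stronger than needed for the bare spectral equality (it is really there so that the Choi matrices are self-adjoint and the spectrum is real). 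Your cautionary remarks about conventions are well placed but do not hide any gap; once the side of conjugate-linearity in the Hilbert--Schmidt product and the ordering in \eqref{eq:def-Choi} are fixed, the chain of equalities goes through exactly as you describe.
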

\begin{proof}
The first statement is contained in \cite[Lemma 4.3]{FuNe15}. The second statement follows from the fact that the eigenvectors of $C_{\varphi^*}$ are obtained from the eigenvectors of $C_\varphi$ by coordinate-wise complex conjugation and swapping of the tensor factors.
\end{proof}

\section{Distribution of the product}
\label{sec:distribution}

In the previous section, we have shown that freeness with amalgamation holds for our model, under some fairly general assumptions on the Choi matrix $C$. In this section, we use the freeness result to compute explicitly the distribution of the modified element $x^\varphi$; we do however require some extra assumptions on the Choi matrix, in order to obtained a precise characterization of the modified distribution. If for the freeness result, the assumption that the eigenvectors $C$ are well-behaved (see Definition \ref{def:C-well-behaved}) is enough, in the next result we require the stronger unitarity condition from Definition \ref{def:UC}.

\begin{theorem}\label{thm:distribution-modified}
If the Choi matrix $C$ satisfies the unitarity condition from Definition \ref{def:UC}, then the distribution of $x^\varphi$ has the following $R$-transform:
\begin{equation}\label{eq:R-transf-x-phi}
 R_{x^\varphi}(z) = \sum_{i=1}^s d_i \rho_i R_x\left[ \frac{\rho_i}{n} z\right],
\end{equation}
where $\rho_i$ are the distinct eigenvalues of $C$ and $n d_i$ are ranks of the corresponding eigenprojectors. In other words, if $\mu$, resp.~ $\mu^\varphi$, are the respective distributions of $x$ and $x^\varphi$, then
\begin{equation}\label{eq:mu-x-phi}
\mu^\varphi = \boxplus_{i=1}^s (D_{\rho_i/n} \mu)^{\boxplus nd_i}.
\end{equation}
\end{theorem}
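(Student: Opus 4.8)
The plan is to use the freeness result from the previous section together with the $R$-transform linearization of free additive convolution. By Proposition \ref{prop:UC}, the unitarity condition (UC) implies that the eigenspaces of $C$ are tracially well behaved, so the theorem preceding this one applies: $\hat x \otimes C$ and $ff^*$ are free with amalgamation over the commutative algebra $\mathcal B = \langle p_m, p_n \rangle \otimes \langle C \rangle$. The first step is to reduce the computation of the scalar distribution of $x^\varphi$ to an operator-valued $R$-transform computation over $\mathcal B$, via the normalization $\tau[(x^\varphi)^k] = nm \cdot \mathbb E[y^k]$ with $y = (x \otimes C) \cdot ff^*$, keeping track of the Dirac mass at zero and the factor $r$ from \eqref{eq:relation-Psi-y-xphi}.

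Next I would exploit the fact that $\mathcal B$ is commutative: over a commutative algebra, the operator-valued $S$-transform of a product of free elements factorizes simply as $\mathcal S_{xy}(b) = \mathcal S_x(b)\mathcal S_y(b)$, and the relation $b\mathcal R(b) + \mathcal S(b\mathcal R(b)) = b$ links $\mathcal R$ and $\mathcal S$. Alternatively — and this is cleaner — one observes that under (UC) we have $Q_t = d_t \cdot 1$ with $d_t = n^{-1}\mathrm{rk}\,P_t$, so $ff^*$ behaves, relative to the conditional expectation, like a ``scalar'' times the identity on each eigenspace of $C$: from the computation \eqref{eq:UC} the $\mathcal B$-valued moments of $ff^*$ are determined by the $d_t$. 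The key point is that $x \otimes C$ restricted to the eigenspace of $\rho_i$ is just $\rho_i$ copies (with multiplicity $\mathrm{rk}\,P_t = nd_i$) of $\rho_i x$. Since the $R$-transform linearizes free convolution and is compatible with dilations as $R_{D_\lambda \mu}(z) = \lambda R_\mu(\lambda z)$, the product $(x \otimes C) ff^*$ decomposes, block by block along the spectral projections $P_i$, into a free convolution of $nd_i$ copies of the dilation $D_{\rho_i/n}\mu$. Summing the $R$-transforms over the blocks, and using that the blocks are mutually free (they live in orthogonal corners $P_i$), yields $R_{x^\varphi}(z) = \sum_i d_i \rho_i R_x[(\rho_i/n)z]$, which is precisely \eqref{eq:R-transf-x-phi}; \eqref{eq:mu-x-phi} is the measure-theoretic restatement.

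Concretely, the steps in order are: (i) invoke the freeness theorem and reduce to $y = (x \otimes C)ff^*$; (ii) pass to the $\mathcal B$-valued Cauchy/$R$-transform over the commutative algebra $\mathcal B$ and write it block-diagonally with respect to the spectral projections $p_n \otimes P_i$, noting that only the ``$p_n$-corner'' survives (as in the proof of the previous theorem, $w_{h'}p_i w_h^* = \mathbf 1_{i=m} w_{h'}w_h^*$ and the surviving conditional expectation lands in $p_n \otimes \langle C\rangle$); (iii) on the $i$-th block identify $x \otimes C$ with $\rho_i x$ and $ff^*$ with the identity scaled so that the trace matches, using $\tau(w_j w_j^*) = (m+n)^{-1}$ from Lemma \ref{lem:covariance} and $d_i = n^{-1}\mathrm{rk}\,P_i$; (iv) apply the scalar dilation identity $R_{D_\lambda\mu} = \lambda R_\mu(\lambda \cdot)$ and free additivity of $R$-transforms; (v) sum over $i$ and renormalize to pass from $y$ back to $x^\varphi$, checking that the mass bookkeeping (Dirac at $0$ of weight $1 - 1/N$, factor $r = \sum_i \mathrm{rk}\,P_i$, etc.) is consistent.

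The main obstacle I anticipate is step (iii): making rigorous the identification of $(x\otimes C)ff^*$ on each spectral corner with a free convolution of dilated copies of $\mu$. One must argue that within the eigenspace of $\rho_i$, the $\mathrm{rk}\,P_i = nd_i$ ``directions'' behave as mutually free rescaled copies of $x$ — this is where the tracial well-behavedness (hence (UC)) is essential, since it guarantees the off-diagonal correlations $\tau(w_{j_1}w_{j_2}^* Q_{i_1}\cdots Q_{i_k})$ vanish for $j_1 \neq j_2$, decoupling the directions. A careful way to see this is to note that $f f^*$ and $f^* f$ have the same nonzero spectrum, that $f^* f \in M_{nm}(\mathbb C) \otimes \mathcal A$ is built from the $w_s$, and that under (UC) the Gram-type matrix $(\tau(w_s w_t^* \cdots))$ is scalar on each eigenblock; combined with the R-cyclicity/freeness of the $x_{ij}$ recorded in \eqref{rcyclic}, this forces the block-$i$ contribution to be exactly $(D_{\rho_i/n}\mu)^{\boxplus nd_i}$. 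Once this structural fact is in place, the rest is bookkeeping with $R$-transforms and dilations.
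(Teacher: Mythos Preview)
Your first-mentioned route --- compute the $\mathcal B$-valued $S$-transforms of $x\otimes C$ and $ff^*$ over the commutative algebra $\mathcal B$, multiply, and invert --- is exactly what the paper does, and it works cleanly. The paper finds $S_{x\otimes C}(B)=C^{-1}S_x(B)$ and, using (UC), $\Psi_{ff^*}(B)=B/(n-\mathrm{Tr}\,B)$ so that $S_{ff^*}(B)=n(B+I)/(1+\mathrm{Tr}\,B)$; then it solves $\Psi_y^{\langle-1\rangle}(B)=aI$ with the substitution $b_i=z_iR_x(z_i)$ and the identity $zR(z)S(zR(z))=z$, which yields \eqref{eq:R-transf-x-phi} directly. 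If you had stopped there your plan would be essentially the paper's proof.

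The ``cleaner'' alternative you prefer has a real gap. You propose to diagonalize along the spectral projections $P_i$ and treat the $nd_i$ ``directions'' in each eigenspace as mutually free rescaled copies of $x$, arguing that orthogonal corners plus the vanishing of $\tau(w_{j_1}w_{j_2}^*Q_{i_1}\cdots Q_{i_k})$ for $j_1\neq j_2$ decouples them. But $ff^*=\sum_{h,h'}w_h^*w_{h'}\otimes v_hv_{h'}^*$ is \emph{not} block-diagonal in the $P_i$: the moments of $ff^*$ couple the blocks. Concretely, the paper's computation gives $\Psi_{ff^*}\!\left(\sum_i b_iP_i\right)=\sum_i \dfrac{b_i}{\,n-\sum_j nd_jb_j\,}P_i$, so the $i$-th component depends on all the $b_j$ through $\mathrm{Tr}\,B$. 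Living in orthogonal corners gives a direct-sum structure, not freeness, and the off-diagonal vanishing you cite controls the \emph{value} of the conditional expectation (forcing it into $\mathcal B$) but does not sever the inter-block dependence. The decoupling into $\boxplus_i (D_{\rho_i/n}\mu)^{\boxplus nd_i}$ only emerges \emph{after} the scalar inversion, via the substitution that turns $b_i$ into $z_i R_x(z_i)$ with $z_i=(\rho_i/n)\alpha$; it is not visible at the level of $\mathcal B$-valued transforms, and your step (iii) cannot be carried out as stated.
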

\begin{proof}
We start by computing the $\mathcal B$-valued $\Psi$-transforms of the elements $x \otimes C$ and $ff^*$. For simplicity, we shall see the algebra $\mathcal B$ as a commutative subalgebra of $M_{nm}(\mathbb C)$ by identifying the elements $1 \otimes B$ and $B$, for any $B \in M_{nm}(\mathbb C)$. We have
$$\Psi_{x \otimes C}(B) = \sum_{k \geq 1} \mathbb E \left[ (x \otimes C \cdot 1 \otimes B)^k \right] = \sum_{k \geq 1} \tau(x^k) (CB)^k = \Psi_x(CB),$$
where $\Psi_x(\cdot)$ is extended on $\mathcal B  = \langle C \rangle$ by (polynomial) functional calculus. The inverse $\Psi$ transform and the $S$-transform are easily deduced from the above expression:
$$\Psi_{x \otimes C}^{< -1 >}(B) = C^{-1} \Psi_{x}^{< -1 >}(B), \qquad S_{x \otimes C}(B) = C^{-1} S_{x}(B)$$
where $C^{-1}$ is the pseudo-inverse of $C$,
$$C^{-1} = \sum_{t:\rho_t \neq 0} \rho_t^{-1} P_t.$$

Let us now move on to the case of $ff^*$. For a general element $B = \sum_{t=1}^s b_t P_t$, we have
\begin{align*}
\Psi_{ff^*}(B) &= \sum_{k \geq 1} \mathbb E \left[ \left(ff^* \cdot \sum_{t=1}^s b_t 1 \otimes P_t \right)^k \right] \\
&= \sum_{k \geq 1} \sum_{t_1, \ldots, t_{k-1}=1}^s b_{t_1} \cdots b_{t_{k-1}} \mathbb E \left[ ff^* (1\otimes P_{t_1}) ff^* \cdots ff^* (1\otimes P_{t_{k-1}}) ff^* \right] \cdot B.
\end{align*}
Using the unitarity condition (UC) and the relation \eqref{eq:UC}, we get
\begin{align}
\Psi_{ff^*}(B) &= \sum_{k \geq 1} \sum_{t_1, \ldots, t_{k-1}=1}^s b_{t_1} \cdots b_{t_{k-1}} \mathbb E \left[ ff^* (1\otimes P_{t_1}) ff^* \cdots ff^* (1\otimes P_{t_{k-1}}) ff^* \right] \cdot B \\
&= \sum_{k \geq 1} \sum_{t_1, \ldots, t_{k-1}=1}^s b_{t_1} \cdots b_{t_{k-1}} \left( \sum_{h=1}^r  d_{t_1} \cdots d_{t_{k-1}} v_h v_h^* \right) n^{-1}B  \label{usingUC}\\
&= \left[ \sum_{k \geq 1} \left( \sum_{t=1}^s b_t d_t \right)^{k-1} \right]n^{-1} B\\
&= \frac{n^{-1}B}{1-n^{-1}\mathrm{Tr} B} =  \frac{B}{n-\mathrm{Tr} B},
\end{align}
where we have used the fact that $d_t = n^{-1}\mathrm{Tr} P_t$, see Proposition \ref{prop:UC} (the above relation holds for small enough $B$).
The inverse $\Psi$-transform and the $S$-transform follow easily from the relations above:
$$\Psi_{ff^*}^{<-1>}(B) = \frac{nB}{1+ \mathrm{Tr} B}, \qquad S_{ff^*}(B) = n\frac{B+I}{1+ \mathrm{Tr}B}.$$

Since the elements $x \otimes C$ and $ff^*$ are free with amalgamation over a commutative algebra, the $S$-transform of the product $y $ and the inverse $\Psi$-transform are given by
\begin{align}
\nonumber S_{y}(B) &= S_{x \otimes C}(B) S_{ff^*}(B) = nC^{-1} S_x(B)\frac{B+I}{1+ \mathrm{Tr}B} \\
 \label{eq:Psi-inverse}\Psi_{y}^{<-1>}(B) &= nC^{-1}\frac{S_x(B) B}{1+ \mathrm{Tr}B}.
 \end{align}

Since we are only interested in the scalar distribution of the random variable $y$, we only need to invert the function $ \Psi_{y}^{<-1>}$ on scalar elements, i.e. we need to solve the equation $ \Psi_{y}^{<-1>}(B) =a I$, for some real value $a$, and then we obtain $\Psi^{sc}_{y}(a) = \mathrm{tr}(B)$. In other words, using \eqref{eq:relation-Psi-y-xphi}, we have $\Psi^{sc}_{x^\varphi}(a) = \mathrm{Tr}(B)$. Let $B = \sum_{i=1}^s b_i P_i$ be a solution of the equation $ \Psi_{y}^{<-1>}(B) =a I$. On the eigenprojector $P_i$, this equation reads
$$\frac{n}{\rho_i} \cdot \frac{b_iS^{sc}_x(b_i)}{1+\mathrm{Tr} B} = a.$$
Let now $z_i$ be such $b_i = z_i R^{sc}_x(z_i)$. Using the formula $zR(z) + S(zR(z)) = z$ and the previous equation, we obtain, for all $i$,
$$z_i = \frac{\rho_i}{n} a(1 + \mathrm{Tr} B).$$
Plugging this value into $\Psi^{sc}_{x^\varphi}(a) = \mathrm{Tr}(B)$, we get
$$\Psi^{sc}_{x^\varphi}(a) = \mathrm{Tr}(B) = \sum_{i=1}^s nd_i b_i = \sum_{i=1}^s nd_i z_i R_x^{sc}(z_i) = \sum_{i=1}^s d_i \rho_i a(1 + \mathrm{Tr} B) R_x^{sc}\left[ \frac{\rho_i}{n} a(1 + \mathrm{Tr} B)\right].$$
Replacing $a$ by $1/(R^{sc}_{x^\varphi}(\alpha) + 1/\alpha)$ and using $\mathrm{Tr}(B) = \Psi^{sc}_{x^\varphi}(a) = \alpha R^{sc}_{x^\varphi}(\alpha)$, we obtain $a(1 + \mathrm{Tr} B) = \alpha$. Thus, we finally get
$$ \alpha R^{sc}_{x^\varphi}(\alpha) = \sum_{i=1}^s d_i \rho_i \alpha R^{sc}_x\left[ \frac{\rho_i}{n} \alpha\right],$$
which is our conclusion.
\end{proof}

\begin{remark}
From the previous theorem, it follows that whenever the measure $\mu$ is supported on the positive half-line, i.e~ $\mu([0, \infty)) = 1$, and the Choi matrix $C$ is positive semi-definite, the modified measure $\mu^\varphi$ is also supported on the positive reals. This is a consequence of a linear algebra fact: $C$ being positive semi-definite, the map $\varphi$ is \emph{completely positive}, and thus, for any positive semi-definite matrix $X$, the block-modified matrix $X^\varphi = [\mathrm{id} \otimes \varphi](X)$ is also positive semi-definite. However, neither converse holds: there exist matrices $C$ with negative eigenvalues such that $\mu^\varphi$ is not positively supported, although $\mu$ is (see Section \ref{sec:partial-transposition}), and there exist measures $\mu$ which are not positively supported such that $\mu^\varphi$ is positively supported (see the recent work \cite{CoHaNe15}).
\end{remark}

\section{Examples: specific distributions}
\label{sec:examples-distributions}

\subsection{GUE matrices}

Let us consider in this subsection the case of GUE matrices converging to a semicircular measure $\mu_{sc}$ of mean $a$ and variance $\sigma^2$ (see Section \ref{sec:random-matrices} for the definitions). This measure is characterized by the fact that only its first two free cumulants are non-zero:
\begin{equation}\label{eq:R-semicircular}
R_{\mu_{sc}}(z) = a + \sigma^2 z.
\end{equation}

\begin{proposition}\label{prop:GUE}
Consider a sequence of GUE matrices $X_d \in M_d(\mathbb C) \otimes M_m(\mathbb C)$ converging in distribution to the semicircular measure $\mu{sc}$ of mean $a \in \mathbb R$ and variance $\sigma^2 \geq 0$. Then, for any linear map $\varphi: M_m(\mathbb C) \to M_n(\mathbb C)$ satisfying the unitarity condition (UC) from Definition \ref{def:UC}, the sequence of modified matrices $X^\varphi = [\mathrm{id} \otimes \varphi](X) \in M_d(\mathbb C) \otimes M_n(\mathbb C)$ converge in distribution to another semicircular measure of mean $a^\varphi$ and variance $(\sigma^\varphi)^2$, where
\begin{align*}
a^\varphi &=  \frac{\mathrm{Tr}C}{n} a\\
\sigma^\varphi &= \frac{\sigma}{n} \sqrt{\mathrm{Tr}(C^2)}.
\end{align*}
\end{proposition}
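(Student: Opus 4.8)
The plan is to apply Theorem \ref{thm:distribution-modified} directly, using the fact that the semicircular distribution is characterized by having only its first two free cumulants non-zero, so its $R$-transform is the affine function \eqref{eq:R-semicircular}. First I would note that a GUE matrix is unitarily invariant, so the hypotheses of Theorem \ref{thm:distribution-modified} are met once $C$ satisfies (UC), and the limiting distribution $\mu^\varphi$ of $X^\varphi$ is determined by its $R$-transform via \eqref{eq:R-transf-x-phi}. Substituting $R_x(w) = R_{\mu_{sc}}(w) = a + \sigma^2 w$ into that formula gives
\[
R_{x^\varphi}(z) = \sum_{i=1}^s d_i \rho_i \left( a + \sigma^2 \frac{\rho_i}{n} z \right) = a \sum_{i=1}^s d_i \rho_i + \frac{\sigma^2}{n} z \sum_{i=1}^s d_i \rho_i^2.
\]

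The next step is to recognize the two coefficient sums. Since $n d_i = \mathrm{rk}\, P_i$ by Proposition \ref{prop:UC}, and the $\rho_i$ are the distinct eigenvalues of $C$ with eigenprojectors $P_i$, we have $\sum_i n d_i \rho_i = \sum_i \rho_i \,\mathrm{rk}\, P_i = \mathrm{Tr}\, C$ and $\sum_i n d_i \rho_i^2 = \mathrm{Tr}\, C^2$. Therefore
\[
R_{x^\varphi}(z) = \frac{\mathrm{Tr}\, C}{n} a + \frac{\mathrm{Tr}\, C^2}{n^2} \sigma^2 z,
\]
which is again an affine function of $z$, hence the $R$-transform of a semicircular law, and reading off the constant and linear coefficients against \eqref{eq:R-semicircular} yields $a^\varphi = \tfrac{\mathrm{Tr}\, C}{n} a$ and $(\sigma^\varphi)^2 = \tfrac{\mathrm{Tr}\, C^2}{n^2}\sigma^2$, i.e. $\sigma^\varphi = \tfrac{\sigma}{n}\sqrt{\mathrm{Tr}(C^2)}$, as claimed. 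One should also observe that $\mathrm{Tr}(C^2) \geq 0$ always, so $\sigma^\varphi$ is well-defined and real.

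I do not anticipate a genuine obstacle here: the proposition is essentially a specialization of Theorem \ref{thm:distribution-modified} combined with the characterization of semicircular laws by their $R$-transform and the bookkeeping identities $\mathrm{Tr}\, C^k = \sum_i (\mathrm{rk}\, P_i)\rho_i^k$. The only point requiring a small amount of care is making sure the free-convolution description \eqref{eq:mu-x-phi} is consistent with the affine $R$-transform — indeed, a dilation $D_{\rho_i/n}$ of a semicircular law is semicircular, and a free convolution of semicircular laws is semicircular with means and variances adding, which recovers the same two sums and serves as a sanity check. Alternatively, one can give the whole argument at the level of measures: $\mu^\varphi = \boxplus_{i=1}^s (D_{\rho_i/n}\mu_{sc;a,\sigma})^{\boxplus nd_i}$ is a free convolution of semicirculars, hence semicircular, with mean $\sum_i nd_i (\rho_i/n) a = \mathrm{Tr}(C)a/n$ and variance $\sum_i nd_i (\rho_i/n)^2 \sigma^2 = \mathrm{Tr}(C^2)\sigma^2/n^2$.
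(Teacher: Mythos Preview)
Your proposal is correct and follows exactly the approach the paper takes: the paper's proof simply states that the result follows from Theorem \ref{thm:distribution-modified} together with the form \eqref{eq:R-semicircular} of the $R$-transform for semicircular variables, and you have filled in precisely those details (including the identification $\sum_i n d_i \rho_i^k = \mathrm{Tr}\,C^k$ via Proposition \ref{prop:UC}).
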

\begin{proof}
The proof follows from Theorem \ref{thm:distribution-modified} and the form of the $R$-transform for semicircular random variables \eqref{eq:R-semicircular}.
\end{proof}

\subsection{Compound Wishart matrices}

Compound free Poisson distributions were introduced in \cite{Sp98} as a generalization of the free Poisson (or Mar\u{c}enko-Pastur) distribution. Given a positive measure $\mu$ on the real line, the \emph{compound free Poisson} distribution of parameter $\mu$ is the unique ($\boxplus$-infinitely divisible) probability measure on the real line having the moments of $\mu$ as its free cumulants, see Section \ref{sec:random-matrices} and \cite[Proposition 3.3.11]{HiPe00}.

As in the case of the usual free Poisson distribution, there is a very natural random matrix model for the compound free Poisson class of measures. Let $\lambda$ be the mass of $\mu$ and consider the normalized probability measure $\tilde \mu(\cdot) = \mu(\cdot) / \lambda$. Let $X_d \in M_{d \times s_d}(\mathbb C)$ be a sequence of random Ginibre matrices (i.e.~ random matrices with i.i.d.~ standard, complex Gaussian entries) such that $s_d \sim \lambda d$. Consider also a sequence of selfadjoint matrices $D_d \in M_{s_d}(\mathbb C)$, independent from $X_d$ (or deterministic) that converge in distribution to $\tilde \mu$. Then, the sequence of random matrices
$$\frac{1}{s_d} X_d D_d X_d^*$$
converges in distribution to the compound free Poisson distribution $\pi_\mu$; see \cite[Proposition 4.4.11]{HiPe00}.

In the next proposition, we show that the class of compound free Poisson distributions is stable under linear block modifications.

\begin{proposition}\label{prop:Wishart}
Consider a sequence of compound Wishart matrices $X_d \in M_d(\mathbb C) \otimes M_m(\mathbb C)$ converging in distribution to a compound free Poisson measure $\pi_\mu$, where $\mu$ is a positive measure supported on the real line (not necessarily of unit mass). Then, for any linear map $\varphi: M_m(\mathbb C) \to M_n(\mathbb C)$ satisfying the unitarity condition (UC) from Definition \ref{def:UC}, the sequence of modified matrices $X^\varphi = [\mathrm{id} \otimes \varphi](X) \in M_d(\mathbb C) \otimes M_n(\mathbb C)$ converge in distribution to another compound free Poisson measure $\pi_{\mu^\varphi}$, where
$$\mu^\varphi = \sum_{i=1}^s nd_i D_{\rho_i/n}[\mu].$$

In particular, if the limiting eigenvalue distribution of the random matrices $X_d$ is a (usual) free Poisson measure of parameter $\lambda>0$ (i.e.~ $\mu = \lambda \delta_1$), then the limiting eigenvalue distribution of the modified matrices $X_d^\varphi$ is a compound free Poisson distribution of parameter
$$\mu^\varphi = \lambda\sum_{i=1}^s  nd_i \delta_{\rho_i/n},$$
which is $\lambda m n $ times the empirical eigenvalue distribution of the rescaled Choi matrix $C/n$.
\end{proposition}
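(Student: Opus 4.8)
The plan is to apply Theorem~\ref{thm:distribution-modified} directly, since the hypothesis (UC) on the Choi matrix is assumed. The key point is that the compound free Poisson class is, by its very definition, parametrized by its free cumulants: $\pi_\mu$ has free cumulants $R_k(\pi_\mu) = \int x^k \, d\mu(x)$, equivalently its $R$-transform is $R_{\pi_\mu}(z) = \sum_{k\geq 1} \left(\int x^k d\mu\right) z^{k-1}$. So everything reduces to tracking what the formula \eqref{eq:R-transf-x-phi} does to the $R$-transform, and then recognizing the answer as again a compound free Poisson $R$-transform.

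First I would take $\mu$ to be the limiting compound free Poisson parameter, so $x$ has $R$-transform $R_x(z) = R_{\pi_\mu}(z) = \sum_{k\geq 1} m_k(\mu) z^{k-1}$ where $m_k(\mu) = \int_{\mathbb R} t^k \, d\mu(t)$. Plugging into \eqref{eq:R-transf-x-phi}, I get
\begin{align*}
R_{x^\varphi}(z) &= \sum_{i=1}^s d_i \rho_i R_x\!\left[\frac{\rho_i}{n} z\right] = \sum_{i=1}^s d_i \rho_i \sum_{k\geq 1} m_k(\mu) \left(\frac{\rho_i}{n}\right)^{k-1} z^{k-1}\\
&= \sum_{k\geq 1} \left( \sum_{i=1}^s n d_i \left(\frac{\rho_i}{n}\right)^k m_k(\mu) \right) z^{k-1}.
\end{align*}
Now I recognize the bracketed coefficient of $z^{k-1}$ as the $k$-th moment of the measure $\mu^\varphi := \sum_{i=1}^s n d_i \, D_{\rho_i/n}[\mu]$, because the $k$-th moment of the pushforward (dilation) $D_{\rho_i/n}[\mu]$ is $(\rho_i/n)^k m_k(\mu)$, and moments are additive over (positive) sums of measures. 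Since $\mu$ is positive and each $\rho_i/n > 0$ on the support of the sum (eigenvalues $\rho_i$ can be negative, but dilation by a negative scalar still gives a well-defined finite measure — one should note $\mu^\varphi$ is a positive measure when all $\rho_i \geq 0$; in general it is a signed measure, but its moments are exactly the free cumulants required), we conclude $R_{x^\varphi}$ is the $R$-transform of the compound free Poisson distribution $\pi_{\mu^\varphi}$, which gives the first claim. Equivalently, this can be read off from the additivity/dilation behaviour of compound free Poisson laws under $\boxplus$ applied to \eqref{eq:mu-x-phi}: $(D_{\rho_i/n}\pi_\mu)^{\boxplus n d_i} = \pi_{n d_i D_{\rho_i/n}\mu}$ and $\boxplus_i \pi_{\mu_i} = \pi_{\sum_i \mu_i}$.

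For the particular case, I specialize $\mu = \lambda \delta_1$, so that $\pi_\mu$ is the free Poisson (Mar\u{c}enko--Pastur) law of parameter $\lambda$. Then $D_{\rho_i/n}[\lambda \delta_1] = \lambda \delta_{\rho_i/n}$, so $\mu^\varphi = \lambda \sum_{i=1}^s n d_i \, \delta_{\rho_i/n}$, which is the first displayed formula. To identify this with a rescaling of the empirical spectral distribution of $C/n$: recall from Proposition~\ref{prop:UC} that $n d_i = \mathrm{rk}\, P_i$ is the multiplicity of the eigenvalue $\rho_i$ of $C$, and $C \in M_{nm}(\mathbb C)$ has $nm$ eigenvalues counted with multiplicity. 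Hence $\sum_{i=1}^s (n d_i) \delta_{\rho_i/n} = \sum_{\text{eigenvalues }\rho\text{ of }C} \delta_{\rho/n}$, which is exactly $nm$ times the empirical eigenvalue distribution of $C/n$ (the empirical distribution being the normalized counting measure $\frac{1}{nm}\sum \delta_{\rho/n}$). Multiplying by the extra factor $\lambda$ gives $\mu^\varphi = \lambda m n \cdot (\text{empirical eigenvalue distribution of }C/n)$, as claimed.

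There is no serious obstacle here; this is essentially a bookkeeping argument combining Theorem~\ref{thm:distribution-modified} with the definitional characterization of compound free Poisson measures via their free cumulants. The only point requiring a word of care is that when $C$ has negative eigenvalues, the ``measure'' $\mu^\varphi$ is really a signed measure (the dilations $D_{\rho_i/n}[\mu]$ flip sign-wise), so the statement ``$\pi_{\mu^\varphi}$'' should be understood via the free-cumulant characterization rather than the random-matrix model; when $C \geq 0$ (equivalently $\varphi$ completely positive) this subtlety disappears and $\mu^\varphi$ is a genuine positive measure.
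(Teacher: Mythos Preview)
Your proof is correct and follows essentially the same route as the paper: apply Theorem~\ref{thm:distribution-modified}, expand the $R$-transform of $\pi_\mu$ as $\sum_{k\geq 1} m_k(\mu) z^{k-1}$, and regroup to recognize the moments of $\mu^\varphi = \sum_i n d_i D_{\rho_i/n}[\mu]$. One small inaccuracy in your closing aside: since $D_c$ is the pushforward under $t\mapsto ct$, each $D_{\rho_i/n}[\mu]$ is a genuine positive measure even when $\rho_i<0$, and since the coefficients $n d_i = \mathrm{rk}\,P_i$ are positive, $\mu^\varphi$ is always a positive measure---no signed-measure caveat is needed.
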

\begin{proof}
Using Theorem \ref{thm:distribution-modified} and the fact that the free cumulants of a compound free Poisson measure $\pi_\mu$ are the moments of $\mu$, we have ($m_p$ denotes the $p$-th moment of a measure)
\begin{align*}
R_{x^\varphi}(z) &= \sum_{i=1}^s d_i \rho_i R_x\left[ \frac{\rho_i}{n} z\right] =\sum_{i=1}^s d_i \rho_i R_{\pi_\mu}\left[ \frac{\rho_i}{n} z\right] \\
&=  \sum_{i=1}^s d_i \rho_i \sum_{p=0}^\infty m_{p+1}(\mu) \left(\frac{\rho_i}{n} z\right)^p \\
&=  \sum_{i=1}^s d_i \rho_i \frac{n}{\rho_i} \sum_{p=0}^\infty m_{p+1}(D_{\rho_i/n}[\mu]) z^p \\
&=  \sum_{p=0}^\infty m_{p+1} \left( \sum_{i=1}^s nd_i D_{\rho_i/n}[\mu]\right) z^p\\
 &= R_{\pi_{\mu^\varphi}}(z).
\end{align*}

\end{proof}

\begin{remark}
The above result generalizes \cite[Theorem 3.1]{BaNe12b}, in the case where the assumptions of both statements are satisfied. Note however that the unitarity condition (UC) from Definition \ref{def:UC} is, in practice, much easier to verify that the set of conditions from \cite[Theorem 3.1]{BaNe12b}, which are indexed by an integer parameter $p \in \mathbb N$.
\end{remark}

\section{Examples: specific linear maps}
\label{sec:examples-maps}

\subsection{Unitary rotations of arbitrary random matrices}

To warm up, let us start with the unitary rotation map $\varphi:M_n(\mathbb C) \to M_n(\mathbb C)$ given by $\varphi(X) =UXU^*$, for some fixed unitary rotation $U \in \mathcal U_n$. In this case, it is clear that for any (random) bi-partite matrix $M_d(\mathbb C) \otimes M_n(\mathbb C)$, one has
$$X^\varphi = [\mathrm{id} \otimes \varphi](X) = (I \otimes U) X (I \otimes U)^*,$$
so that $X^\varphi$ and $X$ have the same spectrum.

Applying the formalism of our work to this case, we find that the Choi matrix corresponding to map $\varphi$ has unit rank: $C=uu^*$, where $u \in \mathbb C^n \otimes \mathbb C^n$ is the ``vectorization'' of the square matrix $U$:
$$u = \sum_{i,j=1}^n \langle e_i , U e_j \rangle e_i \otimes e_j = \sum_{i,j=1}^n U_{ij} e_i \otimes e_j.$$
Since $\|u\| = \sqrt n$, the Choi matrix $C$ has a unique non-zero eigenvalue, equal to $n$, of multiplicity one. Notice that $C$ satisfies the unitarity condition (UC) from Definition \ref{def:UC}, since this property is equivalent to the unitarity of the matrix $U$. Considering a sequence of matrices $X_d$, converging in distribution to a limit $\mu$, we apply Theorem \ref{thm:distribution-modified}, verifying that $\mu^\varphi = \mu$.

\subsection{The trace and its dual}

In this subsection, we look at the following two maps:
\begin{align*}
\mathrm{\psi_A}: M_m(\mathbb C) \to \mathbb C \cong M_1(\mathbb C),\quad &\psi_A(X) = \mathrm{Tr}(A^\top X) \\
J : M_1(\mathbb C) \to M_n(\mathbb C), \quad &J(z) = xI_n,
\end{align*}
where $A \in M_m(\mathbb C)$ is a self-adjoint matrix and $x \in \mathbb R$. Note that, for $m=n$, the maps $J$ and $\psi_I$ are dual to one other. The Choi matrices read, respectively,
\begin{align*}
C_{\psi_A} &= 1 \otimes A \in M_1(\mathbb C) \otimes M_m(\mathbb C)\\
C_J &= xI_n \otimes 1 \in M_n(\mathbb C) \otimes M_1(\mathbb C),
\end{align*}
and it is easy to check that they both satisfy the unitarity condition (UC) from Definition \ref{def:UC}. In the case of the map $J$, the modified matrix is $X_d^J = zX_d \otimes I_n$, so its spectral distribution will converge to $D_x[\mu]$, as predicted by Theorem \ref{thm:distribution-modified}. Using the same theorem, we can prove the following proposition.

\begin{proposition}\label{prop:trace}
Consider a sequence of unitarily invariant random matrices $X_d \in M_d(\mathbb C) \otimes M_m(\mathbb C)$ converging in distribution to a probability measure $\mu$. Then, the sequence of  matrices $X_d^{\psi_A}:= [\mathrm{id} \otimes \psi_A](X_d) \in M_d(\mathbb C)$ converges in distribution to the probability measure $\mu^{\psi_A}$ given by
\begin{equation}
\mu^{\psi_A} = D_{\lambda_1}[\mu] \boxplus D_{\lambda_2}[\mu] \boxplus \cdots \boxplus D_{\lambda_m}[\mu],
\end{equation}
where $\lambda_1, \ldots , \lambda_m$ are the eigenvalues of $A$. In particular, when $A = I_m$, $X_d^{\psi_I}:= [\mathrm{id} \otimes \mathrm{Tr}](X_d) \in M_d(\mathbb C)$ is the partial trace of $X_d$, and
\begin{equation}
\mu^{\psi_I} = \mu^{\boxplus m}.
\end{equation}
\end{proposition}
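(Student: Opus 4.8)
The plan is to specialize Theorem \ref{thm:distribution-modified} to the map $\varphi = \psi_A$, for which the target is $M_1(\mathbb C) \cong \mathbb C$, i.e.~$n = 1$. First I would record, as already observed above, that the Choi matrix of $\psi_A$ is $C_{\psi_A} = 1 \otimes A \in M_1(\mathbb C) \otimes M_m(\mathbb C)$, which I identify with $A \in M_m(\mathbb C)$; since $A = A^*$ this matrix is self-adjoint, with real eigenvalues $\lambda_1, \ldots, \lambda_m$ listed with multiplicity. I write $\rho_1, \ldots, \rho_s$ for the distinct values among the $\lambda_j$, let $P_i$ be the orthogonal projection onto the $\rho_i$-eigenspace, and put $m_i = \mathrm{rk}\, P_i$ (the multiplicity of $\rho_i$), so that $\sum_{i=1}^s m_i = m$.

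The one step that requires a short computation is checking that $C_{\psi_A}$ satisfies the unitarity condition (UC) of Definition \ref{def:UC}. Using \eqref{eq:ws} and $n = 1$, every $w_s$ is supported on the single ``row'' indexed by $m+1$, namely $w_s = \sum_{j=1}^m \langle E_1 \otimes E_j, v_s\rangle\, e_{m+1,j}$ with $v_s \in \mathbb C^m$ a unit eigenvector; hence $w_s w_t^* = \langle v_t, v_s\rangle\, e_{m+1,m+1} = \delta_{st}\, p_n$, and therefore $Q_t = \sum_{j \in J_t} w_j w_j^* = (\mathrm{rk}\, P_t)\, p_n$. Since $p_n$ is the identity of the corner $p_n \mathcal A p_n$ in which the modified element lives, this is precisely condition (UC), with $d_t = \mathrm{rk}\, P_t = n^{-1}\mathrm{rk}\, P_t$, consistently with Proposition \ref{prop:UC}. (Alternatively one may simply quote the remark made above that $C_{\psi_A}$ satisfies (UC).)

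With (UC) in hand, Theorem \ref{thm:distribution-modified} applies. Putting $n = 1$ into \eqref{eq:mu-x-phi} gives $\rho_i / n = \rho_i$ and $n d_i = m_i$, so $\mu^{\psi_A} = \boxplus_{i=1}^s (D_{\rho_i}\mu)^{\boxplus m_i}$; equivalently, by \eqref{eq:R-transf-x-phi} together with the dilation rule $R_{D_\rho\nu}(z) = \rho R_\nu(\rho z)$ and additivity of the $R$-transform under $\boxplus$, one has $R_{x^{\psi_A}}(z) = \sum_{i=1}^s m_i \rho_i R_x(\rho_i z) = \sum_{i=1}^s m_i R_{D_{\rho_i}\mu}(z)$. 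It then remains only to regroup: listing each distinct eigenvalue $\rho_i$ with its multiplicity $m_i$ reconstitutes the full list $\lambda_1, \ldots, \lambda_m$, whence $\boxplus_{i=1}^s (D_{\rho_i}\mu)^{\boxplus m_i} = D_{\lambda_1}[\mu] \boxplus D_{\lambda_2}[\mu] \boxplus \cdots \boxplus D_{\lambda_m}[\mu]$, which is the asserted formula. For $A = I_m$ every $\lambda_j$ equals $1$, $D_1[\mu] = \mu$, and this collapses to $\mu^{\psi_I} = \mu^{\boxplus m}$. Finally, the fact that $X_d^{\psi_A}$ converges in distribution to $x^{\psi_A}$, and hence its empirical eigenvalue distribution to $\mu^{\psi_A}$, is Proposition \ref{prop:freeness-rectangular} applied to $\varphi = \psi_A$ (with ambient dimension $m + n = m+1$).

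I do not anticipate any real obstacle: the argument is a direct substitution into Theorem \ref{thm:distribution-modified} followed by an elementary regrouping of eigenvalues. The only point needing a little care is the verification of (UC) in the degenerate case $n = 1$, where the relevant identity is the corner projection $p_n$ rather than $1_{\mathcal A}$.
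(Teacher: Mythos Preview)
Your proposal is correct and follows exactly the approach the paper intends: the paper simply says ``Using the same theorem, we can prove the following proposition'' and leaves the details implicit, while you spell out the specialization of Theorem \ref{thm:distribution-modified} to $n=1$, the verification of (UC) for $C_{\psi_A}=A$, and the regrouping of the distinct eigenvalues $\rho_i$ with multiplicities $m_i$ back into the full list $\lambda_1,\ldots,\lambda_m$. Your observation that in the rectangular setting $Q_t$ equals $(\mathrm{rk}\,P_t)\,p_n$ rather than a scalar multiple of $1_{\mathcal A}$ is well taken and is indeed how (UC) should be read here (this is consistent with the computation $d_t=n^{-1}\mathrm{rk}\,P_t$ in Proposition \ref{prop:UC}, which uses $\tau(p_n)=n/(m+n)$).
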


\subsection{The partial transposition}
\label{sec:partial-transposition}

Historically, the study linear modifications of block-random matrices was initiated by Aubrun in \cite{Au12}, where he studied partial transpositions of large Wishart random matrices. The same random matrix model, in the asymptotical regime where one dimension is kept fixed, while the other grows, was analyzed in \cite{BaNe12}, and later generalized in \cite{BaNe12b}.

Let us consider the transpose map, $\mathrm{transp}: M_n(\mathbb C) \to M_n(\mathbb C)$, $\mathrm{transp}(X) = X^\top$. Its Choi matrix is given by the \emph{flip operator} $F \in M_n(\mathbb C) \otimes M_n(\mathbb C)$, which is the unitary matrix defined by $F(x \otimes y) = y \otimes x$ for all $x,y \in \mathbb C^n$. The spectral decomposition of the flip operator reads $F = P_+-P_-$, where $P_+$, resp.~ $P_-$, are the orthogonal projections on the symmetric, resp.~ anti-symmetric subspaces of the tensor product $\mathbb C^n \otimes \mathbb C^n$. Note that the transposition map satisfies the unitarity condition (UC) from Definition \ref{def:UC}; this is a consequence of the relations
$$P_+ = \frac{I + F}{2} \quad \text{ and } \quad P_- = \frac{I - F}{2},$$
and of the fact that both the identity and the flip have partial traces which are proportional to the identity.

\begin{proposition}\label{prop:partial-transposition}
Consider a sequence of unitarily invariant random matrices $X_d \in M_d(\mathbb C) \otimes M_n(\mathbb C)$ converging in distribution to a probability measure $\mu$. Then, the sequence of partially transposed matrices $X_d^\Gamma:= [\mathrm{id} \otimes \mathrm{transp}](X_d) \in M_d(\mathbb C) \otimes M_n(\mathbb C)$ converges in distribution to the probability measure $\mu^\Gamma$ given by
\begin{equation}
\mu^\Gamma = D_{1/n} \left[ \mu^{\boxplus n(n+1)/2} \boxminus  \mu^{\boxplus n(n-1)/2} \right],
\end{equation}
where $\boxminus$ is denotes following operation: $\nu_1 \boxminus \nu_2 := \nu_1 \boxplus D_{-1}\nu_2$.
\end{proposition}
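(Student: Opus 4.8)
The plan is to apply Theorem \ref{thm:distribution-modified} directly to the transpose map and then to massage the resulting free convolution expression into the stated form using elementary properties of the dilation operators $D_c$.

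First I would record the spectral data of the relevant Choi matrix. As noted in the text preceding the proposition, the Choi matrix of $\mathrm{transp}$ is the flip operator $F = P_+ - P_-$, which satisfies the unitarity condition (UC), so Theorem \ref{thm:distribution-modified} applies. The distinct eigenvalues of $F$ are $\rho_+ = 1$ and $\rho_- = -1$, with eigenprojectors $P_+$ and $P_-$ onto the symmetric and antisymmetric subspaces of $\mathbb C^n \otimes \mathbb C^n$; these subspaces have dimensions $n(n+1)/2$ and $n(n-1)/2$ respectively, so by Proposition \ref{prop:UC} the corresponding parameters are $n d_+ = \mathrm{rk}\, P_+ = n(n+1)/2$ and $n d_- = \mathrm{rk}\, P_- = n(n-1)/2$.

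Next, plugging this data into formula \eqref{eq:mu-x-phi} gives
\begin{equation*}
\mu^\Gamma = (D_{1/n}\mu)^{\boxplus n(n+1)/2} \boxplus (D_{-1/n}\mu)^{\boxplus n(n-1)/2}.
\end{equation*}
It then remains to rewrite the right-hand side. I would use the two elementary facts that dilation is multiplicative, $D_{c_1}\circ D_{c_2} = D_{c_1 c_2}$, and that it is a homomorphism for free additive convolution, $D_c(\nu_1 \boxplus \nu_2) = (D_c\nu_1)\boxplus(D_c\nu_2)$, hence in particular $(D_c\nu)^{\boxplus k} = D_c(\nu^{\boxplus k})$. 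Writing $D_{-1/n} = D_{1/n}\circ D_{-1}$ and factoring $D_{1/n}$ out of the whole expression yields
\begin{equation*}
\mu^\Gamma = D_{1/n}\left[\mu^{\boxplus n(n+1)/2} \boxplus D_{-1}\left(\mu^{\boxplus n(n-1)/2}\right)\right] = D_{1/n}\left[\mu^{\boxplus n(n+1)/2} \boxminus \mu^{\boxplus n(n-1)/2}\right],
\end{equation*}
the last step being just the definition $\nu_1 \boxminus \nu_2 = \nu_1 \boxplus D_{-1}\nu_2$. This is the claimed identity.

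There is essentially no hard step: the substance of the proposition is entirely contained in Theorem \ref{thm:distribution-modified}, and the only things to verify are the (already established) fact that $F$ satisfies (UC) and the elementary dimension count $\dim\mathrm{Sym}^2(\mathbb C^n) = n(n+1)/2$, $\dim\wedge^2(\mathbb C^n) = n(n-1)/2$. If anything, the mildest point of care is keeping the bookkeeping with signs and dilations consistent — in particular that $D_{-1/n}\mu = D_{1/n}(D_{-1}\mu)$ and that $D_{-1}$ commutes past an $n(n-1)/2$-fold free additive convolution — but both facts are immediate from the homomorphism property.
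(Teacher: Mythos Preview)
Your proof is correct and follows exactly the same approach as the paper: apply Theorem \ref{thm:distribution-modified} with the spectral data of the flip operator, then factor out the dilation $D_{1/n}$ and rewrite using the definition of $\boxminus$. If anything, your write-up is slightly more careful than the paper's (which contains a typo, writing $n(n+1)/2$ twice in the intermediate steps where the second exponent should be $n(n-1)/2$).
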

\begin{proof}
Since the eigenvalues of the Choi matrix $F$ of the transposition map are $+1$, $-1$, with respective multiplicities $nd_1 = n(n+1)/2$, $nd_2 = n(n-1)/2$, we have from Theorem \ref{thm:distribution-modified}
\begin{align*}
\mu^\Gamma &= (D_{1/n}\mu)^{\boxplus n(n+1)/2} \boxplus (D_{-1/n}\mu)^{\boxplus n(n+1)/2} \\
&= D_{1/n} \left[ \mu^{\boxplus n(n+1)/2} \boxplus (D_{-1}\mu)^{\boxplus n(n+1)/2} \right] \\
&=D_{1/n} \left[ \mu^{\boxplus n(n+1)/2} \boxminus  \mu^{\boxplus n(n-1)/2} \right].
\end{align*}
\end{proof}

In the case where the measure $\mu$ is a free Poisson measure of parameter $\lambda>0$, we recover Theorem 4.1 from \cite{BaNe12}:
\begin{corollary}
Let $W_d \in M_d(\mathbb C) \otimes M_n(\mathbb C)$ be a sequence of Wishart random matrices converging in distribution to a free Poisson distribution of parameter $\lambda$, $\pi_\lambda$. Then, the partially transposed random matrices $W_d^\Gamma =  [\mathrm{id} \otimes \mathrm{transp}](W_d)$ converge in distribution to a rescaling of a free difference of free Poisson distributions of respective parameters $\lambda n(n+1)/2$ and $\lambda n(n-1)/2$:
$$\pi_\lambda^\Gamma = D_{1/n} \left[ \pi_{\lambda n(n+1)/2} \boxminus  \pi_{\lambda n(n-1)/2}\right].$$
\end{corollary}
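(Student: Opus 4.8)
The plan is to obtain the corollary as a direct specialization of Proposition \ref{prop:partial-transposition}. First I would check that its hypotheses apply: a Wishart matrix is of the form $GG^*$ with $G$ a rectangular Ginibre matrix, hence unitarily invariant, and by the Mar\u{c}enko--Pastur theorem recalled in Section \ref{sec:random-matrices} the sequence $W_d \in M_d(\mathbb C) \otimes M_n(\mathbb C)$ converges in distribution to the free Poisson law $\pi_\lambda$ provided the aspect ratio of the Wishart ensemble tends to $\lambda$. The transposition map on $M_n(\mathbb C)$ has Choi matrix the flip operator $F = P_+ - P_-$, which has already been checked to satisfy the unitarity condition (UC) just before Proposition \ref{prop:partial-transposition}. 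Applying that proposition with $\mu = \pi_\lambda$ therefore yields
$$\pi_\lambda^\Gamma = D_{1/n}\left[ \pi_\lambda^{\boxplus n(n+1)/2} \boxminus \pi_\lambda^{\boxplus n(n-1)/2} \right].$$

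It then remains only to identify the free additive convolution powers of a free Poisson law. Recall from Section \ref{sec:random-matrices} that $\pi_\lambda$ is characterized by the fact that all of its free cumulants equal $\lambda$. Since free cumulants linearize $\boxplus$, every free cumulant of $\pi_\lambda^{\boxplus k}$ equals $k\lambda$, so $\pi_\lambda^{\boxplus k} = \pi_{k\lambda}$ for every positive integer $k$ (with the convention $\pi_\lambda^{\boxplus 0} = \delta_0 = \pi_0$). Substituting $k = n(n+1)/2$ and $k = n(n-1)/2$ — both genuine non-negative integers, being the ranks $nd_1, nd_2$ of the eigenprojectors $P_+, P_-$ of the flip operator — into the displayed formula gives exactly $\pi_\lambda^\Gamma = D_{1/n}\left[ \pi_{\lambda n(n+1)/2} \boxminus \pi_{\lambda n(n-1)/2} \right]$, which is the assertion and recovers \cite[Theorem 4.1]{BaNe12}.

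There is essentially no obstacle here: all the analytic work has been carried out in Theorem \ref{thm:distribution-modified} and Proposition \ref{prop:partial-transposition}, and the only additional ingredient is the elementary stability of the free Poisson family under free convolution powers. The one point worth a line of care is the degenerate case $n = 1$, where $n(n-1)/2 = 0$, the $\boxminus$-term disappears, and the statement correctly reduces to $\pi_\lambda^\Gamma = \pi_\lambda$, consistent with transposition being the identity on $M_1(\mathbb C)$.
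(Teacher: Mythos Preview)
Your proof is correct and follows exactly the paper's intended route: the corollary is stated immediately after Proposition \ref{prop:partial-transposition} with no separate proof, the only missing step being the identification $\pi_\lambda^{\boxplus k} = \pi_{k\lambda}$, which you supply via free cumulants. The extra remarks on unitary invariance of Wishart matrices and the $n=1$ degenerate case are fine but not needed.
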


For random projections, we recover a result proved in \cite[Proposition 6.5]{FuNe15}:
\begin{corollary}
Let $P_d \in M_d(\mathbb C) \otimes M_n(\mathbb C)$ be a sequence of random projection operators converging in distribution to a Bernoulli distribution $b_t = (1-t) \delta_0 + t\delta_1$, for some fixed parameter $t \in (0,1)$. Then, the partially transposed random matrices $P_d^\Gamma =  [\mathrm{id} \otimes \mathrm{transp}](P_d)$ converge in distribution to a rescaling of a free difference of free additive powers of Bernoulli distributions:
$$\pi_\lambda^\Gamma = D_{1/n} \left[ b_t^{\boxplus n(n+1)/2} \boxminus b_t^{n(n-1)/2}\right].$$
\end{corollary}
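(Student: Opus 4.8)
The plan is to deduce this corollary directly from Proposition~\ref{prop:partial-transposition}, since it is a specialization rather than a genuinely new argument. The only point that needs care is that Proposition~\ref{prop:partial-transposition} is stated for \emph{unitarily invariant} random matrices, so the first step is to note that a sequence of rank-$r_d$ orthogonal projections $P_d \in M_d(\mathbb C) \otimes M_n(\mathbb C)$ with $r_d/(dn) \to t$ which is unitarily invariant (for instance, the projection onto a Haar-random $r_d$-dimensional subspace of $\mathbb C^d \otimes \mathbb C^n$) falls exactly into the model $X_d = U_d D_d U_d^*$ of Section~\ref{sec:random-matrices}: here $D_d$ is the diagonal $0$--$1$ matrix with precisely $r_d$ ones, and $U_d$ is Haar-distributed. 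Since $D_d$ converges in distribution to $b_t = (1-t)\delta_0 + t\delta_1$, so does $P_d$; moreover, because the $P_d$ are uniformly bounded (indeed $P_d^k = P_d$), all the moments converge without any further assumption, so the hypotheses of Remark~\ref{freemunits} and Proposition~\ref{prop:freeness-rectangular} are met.

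Next I would simply invoke Proposition~\ref{prop:partial-transposition} with $\mu = b_t$. That proposition — whose proof rests on Theorem~\ref{thm:distribution-modified} together with the fact, verified just before it, that the flip operator $F$ (the Choi matrix of the transposition map) satisfies the unitarity condition (UC) and has eigenvalues $\pm 1$ with multiplicities $n(n+1)/2$ and $n(n-1)/2$ — yields immediately
\begin{equation*}
\mu^\Gamma = D_{1/n}\left[ b_t^{\boxplus n(n+1)/2} \boxminus b_t^{\boxplus n(n-1)/2}\right],
\end{equation*}
which is the claimed formula (the exponent $n(n-1)/2$ on the subtracted term carries an implicit $\boxplus$, and on the left $b_t^\Gamma$ is the limit of $P_d^\Gamma$).

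There is essentially no serious obstacle here; the only thing worth spelling out, if one wishes the statement to hold for an \emph{arbitrary} unitarily invariant sequence of random projections rather than the canonical Haar model, is that the limiting measure produced by Proposition~\ref{prop:partial-transposition} depends only on the limit $\mu = b_t$ of the original sequence and not on the particular $P_d$ — this is built into Remark~\ref{freemunits}(3) and Proposition~\ref{prop:freeness-rectangular}, since the asymptotic joint distribution of $P_d$ with the blown-up matrix units (and hence of $P_d^\Gamma$) is determined by $\mu$ and the amalgamated freeness statement. As a sanity check one may look at $n=1$, where the flip is trivial and the formula collapses to $\mu^\Gamma = b_t$, as it must.
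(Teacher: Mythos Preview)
Your proposal is correct and matches the paper's intent: the corollary is stated there without proof, as an immediate specialization of Proposition~\ref{prop:partial-transposition} with $\mu = b_t$, which is exactly what you do. Your additional remarks about the unitarily invariant projection model and the dependence of the limit only on $\mu$ are accurate and simply make explicit what the paper leaves implicit.
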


\subsection{The reduction map}
\label{sec:partial-reduction}

In \cite{JLN14,JLN15}, the authors study the \emph{reduction map}, following a motivation originating in quantum information theory. The reduction map is another example of positive, but not completely positive application between matrix algebra, and its study has originated with the introduction of the reduction criterion \cite{CAG99}. It is of particularly simple form: for $X \in M_n(\mathbb C)$,
$$X^{red}:= \mathrm{red}(X) = I_n \cdot \mathrm{Tr}(X) - X.$$

\begin{proposition}\label{prop:partial-reduction}
Consider a sequence of random matrices $X_d \in M_d(\mathbb C) \otimes M_n(\mathbb C)$ converging in distribution to ta probability measure $\mu$. Then, the sequence of partially reduced matrices $X_d^{red}:= [\mathrm{id} \otimes \mathrm{red}](X_d) \in M_d(\mathbb C) \otimes M_n(\mathbb C)$ converges in distribution to the probability measure $\mu^{red}$ given by
\begin{equation}
\mu^{red} = D_{1/n} \left[ \mu^{\boxplus n^2-1} \boxminus  D_{n-1}\mu \right].
\end{equation}
\end{proposition}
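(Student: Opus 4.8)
The plan is to recognise this as a direct application of Theorem~\ref{thm:distribution-modified}: one computes the Choi matrix of the reduction map, checks that it verifies the unitarity condition (UC) of Definition~\ref{def:UC}, reads off the eigenvalues and the ranks of the eigenprojectors, and then simplifies the resulting free convolution with a dilation computation. First I would compute the Choi matrix. Since $\mathrm{red}(E_{ij}) = \delta_{ij} I_n - E_{ij}$, formula \eqref{eq:def-Choi} gives
$$C_{\mathrm{red}} = \sum_{i,j=1}^n (\delta_{ij} I_n - E_{ij}) \otimes E_{ij} = I_n \otimes I_n - \Omega_n = I_{n^2} - \Omega_n,$$
where $\Omega_n = \omega_n\omega_n^*$ is the (unnormalised) Bell matrix from \eqref{eq:Bell}. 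Because $\Omega_n \omega_n = \|\omega_n\|^2 \omega_n = n\,\omega_n$ and $\Omega_n$ kills the orthogonal complement of $\omega_n$, the matrix $C_{\mathrm{red}}$ has exactly two distinct eigenvalues: $\rho_1 = 1$, with eigenprojector $P_1 = I_{n^2} - n^{-1}\Omega_n$ of rank $n^2-1$, and $\rho_2 = 1-n$, with eigenprojector $P_2 = n^{-1}\Omega_n$ of rank $1$. Thus $n d_1 = n^2-1$ and $n d_2 = 1$.

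Next I would verify (UC). Exactly as in the partial transposition case of Section~\ref{sec:partial-transposition}, this follows from the fact that both $I_{n^2}$ and $\Omega_n$ have partial traces over the second tensor leg proportional to the identity, namely $\mathrm{tr}_2(I_{n^2}) = n I_n$ and $\mathrm{tr}_2(\Omega_n) = I_n$. Translating to the $Q_t$'s of Section~\ref{sec:freeness} (where $Q_t$ corresponds to the partial trace of the eigenprojector $P_t$), one gets $Q_1 = (n - n^{-1})\cdot 1$ and $Q_2 = n^{-1}\cdot 1$, so the unitarity condition holds with $d_1 = (n^2-1)/n$ and $d_2 = 1/n$, consistent with $d_t = n^{-1}\,\mathrm{rk}\,P_t$ from Proposition~\ref{prop:UC}.

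With (UC) established, Theorem~\ref{thm:distribution-modified} applies and yields
$$\mu^{red} = (D_{1/n}\mu)^{\boxplus (n^2-1)} \boxplus (D_{(1-n)/n}\mu).$$
To reach the stated form, I would use that each $D_a$ is multiplicative in $a$ and a homomorphism for $\boxplus$: writing $D_{(1-n)/n}\mu = D_{1/n}\big(D_{-1}(D_{n-1}\mu)\big)$ and pulling $D_{1/n}$ out of the free convolution gives
$$\mu^{red} = D_{1/n}\Big[\mu^{\boxplus (n^2-1)} \boxplus D_{-1}(D_{n-1}\mu)\Big] = D_{1/n}\Big[\mu^{\boxplus n^2-1} \boxminus D_{n-1}\mu\Big],$$
which is the claimed identity. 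The whole argument is essentially a computation; the only two steps needing attention are the identification of $C_{\mathrm{red}} = I_{n^2}-\Omega_n$ with its spectral data and the verification of (UC), while the final step is routine bookkeeping with dilations and $\boxplus$.
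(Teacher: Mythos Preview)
Your proof is correct and follows essentially the same route as the paper: compute the Choi matrix, read off its spectrum, verify (UC) via the partial traces of the eigenprojectors, apply Theorem~\ref{thm:distribution-modified}, and simplify with dilations. In fact, your computation $C_{\mathrm{red}} = I_{n^2} - \Omega_n$ is the correct one; the paper writes $C_{red} = I - F$, which is a typo, but then proceeds exactly as you do (eigenvalues $1$ and $1-n$, the $(1-n)$-eigenvector being the Bell state), so the two arguments coincide.
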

\begin{proof}
The Choi matrix of the reduction map $\mathrm{red}$ reads $C_{red} = I-F$. Its eigenvalues are respectively $1$ and $1-n$, with multiplicities $n^2-1$, resp.~ $1$. Moreover, the eigenvector associated to the eigenvalue $1-n$ is a multiple of the Bell state \eqref{eq:Bell}, having a partial trace equal to the identity operator on $\mathbb C^n$. Hence, the Choi matrix satisfies the unitarity condition (UC) and, using Theorem \ref{thm:distribution-modified}, we have
\begin{align*}
\mu^{red} &= (D_{1/n}\mu)^{\boxplus n^2-1} \boxplus (D_{(1-n)/n}\mu)\\
&= D_{1/n} \left[ \mu^{\boxplus n^2-1} \boxplus D_{1-n}\mu\right] \\
&= D_{1/n} \left[ \mu^{\boxplus n^2-1} \boxminus  D_{n-1}\mu\right].
\end{align*}
\end{proof}

In the case where the measure $\mu$ is a free Poisson measure of parameter $\lambda>0$, we recover \cite[Theorem 9.1]{JLN14}:
\begin{corollary}
Let $W_d \in M_d(\mathbb C) \otimes M_n(\mathbb C)$ be a sequence of Wishart random matrices converging in distribution to a free Poisson distribution of parameter $\lambda$, $\pi_\lambda$. Then, the partially reduced random matrices $W_d^{red} =  [\mathrm{id} \otimes \mathrm{red}](W_d)$ converge in distribution to a rescaled compound free Poisson distribution $D_{1/n} \pi_{\mu_{n,\lambda}}$, where
$$\mu_{n,\lambda} =  \lambda(n^2-1)\delta_1 + \lambda \delta_{1-n}.$$
\end{corollary}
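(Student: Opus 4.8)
The plan is to specialize Proposition~\ref{prop:Wishart} (equivalently, Proposition~\ref{prop:partial-reduction}) to the free Poisson input and then recognize the resulting compound free Poisson parameter as a dilation of $\mu_{n,\lambda}$. First I would record that a free Poisson measure of parameter $\lambda$ is the compound free Poisson measure $\pi_\mu$ with $\mu=\lambda\delta_1$: indeed its free cumulants are all equal to $\lambda$, and these are precisely the moments $m_k(\lambda\delta_1)=\lambda$. Hence the Wishart ensemble $W_d$, converging to $\pi_{\lambda\delta_1}$, falls under the hypotheses of Proposition~\ref{prop:Wishart}.

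Next I would feed in the spectral data of the Choi matrix of the reduction map, already computed in the proof of Proposition~\ref{prop:partial-reduction}: $C_{red}=I-F$ satisfies the unitarity condition (UC), with distinct eigenvalues $\rho_1=1$ of multiplicity $nd_1=n^2-1$ and $\rho_2=1-n$ of multiplicity $nd_2=1$. Proposition~\ref{prop:Wishart} then yields that $W_d^{red}$ converges in distribution to the compound free Poisson measure $\pi_{\mu^{red}}$ with
$$\mu^{red}=\sum_{i=1}^2 nd_i\, D_{\rho_i/n}[\lambda\delta_1]=\lambda(n^2-1)\,\delta_{1/n}+\lambda\,\delta_{(1-n)/n}.$$

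The last step is to observe that $\mu^{red}$ is exactly $D_{1/n}$ applied to $\mu_{n,\lambda}=\lambda(n^2-1)\delta_1+\lambda\delta_{1-n}$, and that the assignment $\nu\mapsto\pi_\nu$ intertwines dilations, $\pi_{D_c\nu}=D_c\pi_\nu$ for every $c\neq0$. This equivariance follows from the free-cumulant characterization of the compound free Poisson family: for all $k\geq1$ one has $R_k(D_c\pi_\nu)=c^k R_k(\pi_\nu)=c^k m_k(\nu)=m_k(D_c\nu)=R_k(\pi_{D_c\nu})$, and a compactly supported measure is determined by its free cumulants. Combining, $\pi_{\mu^{red}}=\pi_{D_{1/n}\mu_{n,\lambda}}=D_{1/n}\pi_{\mu_{n,\lambda}}$, which is the asserted limit. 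There is no genuine obstacle: the corollary is a direct specialization, and the only points requiring care are (i) verifying that $\pi_\lambda$ indeed belongs to the compound free Poisson class with parameter $\lambda\delta_1$ so that Proposition~\ref{prop:Wishart} applies, and (ii) the elementary bookkeeping of the rescaling factors $\rho_i/n$ versus the outer $D_{1/n}$ — both of which are routine.
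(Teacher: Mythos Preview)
Your proposal is correct and follows exactly the approach the paper intends: the corollary is stated without proof as an immediate specialization of Proposition~\ref{prop:Wishart} (its ``In particular'' clause) using the Choi spectral data of $C_{red}$ computed in Proposition~\ref{prop:partial-reduction}, and your additional verification that $\pi_{D_{1/n}\mu_{n,\lambda}}=D_{1/n}\pi_{\mu_{n,\lambda}}$ via homogeneity of free cumulants is the routine bookkeeping needed to match the form of the statement.
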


\subsection{Generalized partial transposition and reduction maps}

We generalize in this subsection the results from Sections \ref{sec:partial-transposition} and \ref{sec:partial-reduction}, by considering the map $\varphi: M_n(\mathbb C) \to M_n(\mathbb C)$ defined by
\begin{equation}\label{eq:partial-generalization}
\varphi(A) = \alpha A + \beta \mathrm{Tr}(A) I_n  + \gamma A^\top.
\end{equation}
Its Choi matrix reads
$$C_\varphi = \alpha \Omega_n  + \beta I_{n^2} + \gamma F.$$
The eigenvalues and eigenprojectors of $C_\varphi$ are as follows:
\begin{align*}
\lambda_1 = n \alpha + \beta + \gamma &\qquad P_1 = n^{-1}\Omega_n\\
\lambda_2 =  \beta + \gamma &\qquad P_2 = \frac{I+F}{2} - n^{-1}\Omega_n\\
\lambda_3 = \beta - \gamma &\qquad P_3 = \frac{I-F}{2}.
\end{align*}
Since all the partial traces of the eigenprojectors $P_{1,2,3}$ are multiples of the identity operator, the Choi matrix $C_\varphi$ satisfies the unitarity condition (UC), and the following results hold (we leave the proofs to the reader).

\begin{proposition}\label{prop:partial-generalization}
Consider a sequence of random matrices $X_d \in M_d(\mathbb C) \otimes M_n(\mathbb C)$ converging in distribution to ta probability measure $\mu$. Then, for the application $\varphi$ from \eqref{eq:partial-generalization}, the sequence of the block modified matrices $X_d^{\varphi}:= [\mathrm{id} \otimes \varphi](X_d) \in M_d(\mathbb C) \otimes M_n(\mathbb C)$ converges in distribution to the probability measure $\mu^{\varphi}$ given by
\begin{equation}
\mu^{\varphi} = D_{\alpha + (\beta + \gamma)/n} \left[ \mu \right] \boxplus  D_{(\beta + \gamma)/n} \left[ \mu^{\boxplus n(n+1)/2-1} \right] \boxplus  D_{(\beta - \gamma)/n} \left[ \mu^{\boxplus n(n-1)/2} \right].
\end{equation}
\end{proposition}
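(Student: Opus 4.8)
\emph{Proof plan.} The eigenvalues $\lambda_1 = n\alpha+\beta+\gamma$, $\lambda_2 = \beta+\gamma$, $\lambda_3 = \beta-\gamma$ of $C_\varphi = \alpha\Omega_n + \beta I_{n^2} + \gamma F$, together with the eigenprojectors $P_1 = n^{-1}\Omega_n$, $P_2 = (I+F)/2 - n^{-1}\Omega_n$, $P_3 = (I-F)/2$ and their ranks $1$, $n(n+1)/2 - 1$, $n(n-1)/2$ (summing to $n^2$), are already displayed above; they are obtained by simultaneously diagonalizing the three commuting summands --- note that $\omega_n$ is a symmetric tensor, so $F\Omega_n = \Omega_n F = \Omega_n$ --- and using that $\Omega_n$ acts as $n,0,0$ and $F$ as $1,1,-1$ on the respective ranges. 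Given this, the argument reduces to two points: (i) that $C_\varphi$ satisfies the unitarity condition (UC) of Definition~\ref{def:UC}; and (ii) the specialization of Theorem~\ref{thm:distribution-modified}.

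\emph{Step (i): checking (UC).} As explained in Section~\ref{sec:freeness}, the operator $Q_t$ attached to the eigenprojector $P_t$ coincides, under the identification $v\mapsto w$, with the partial trace $(\mathrm{id}\otimes\mathrm{Tr})(P_t)$; hence $C_\varphi$ satisfies (UC) precisely when each $(\mathrm{id}\otimes\mathrm{Tr})(P_t)$ is a scalar multiple of $I_n$. This holds here because $(\mathrm{id}\otimes\mathrm{Tr})(\Omega_n) = (\mathrm{id}\otimes\mathrm{Tr})(F) = I_n$, so the partial trace of each of $\Omega_n$, $I_{n^2}$, $F$, and therefore of each $P_t$, is a multiple of $I_n$ --- the same verification as for the transposition and reduction maps in Sections~\ref{sec:partial-transposition} and~\ref{sec:partial-reduction}. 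Proposition~\ref{prop:UC} then gives $nd_t = \mathrm{rk}\,P_t$, i.e.\ $nd_1 = 1$, $nd_2 = n(n+1)/2 - 1$, $nd_3 = n(n-1)/2$.

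\emph{Step (ii): specializing Theorem~\ref{thm:distribution-modified}.} The theorem yields $\mu^\varphi = \boxplus_{i=1}^3 (D_{\lambda_i/n}\mu)^{\boxplus nd_i}$. Substituting $\lambda_1/n = \alpha + (\beta+\gamma)/n$, $\lambda_2/n = (\beta+\gamma)/n$, $\lambda_3/n = (\beta-\gamma)/n$ and using the identity $(D_c\nu)^{\boxplus k} = D_c(\nu^{\boxplus k})$ (dilations commute with free additive convolution, as is immediate on free cumulants), the three factors become precisely the three summands in the claimed formula, completing the proof.

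\emph{On the main difficulty.} There is no genuine analytic obstacle --- once (UC) is checked, the statement is a direct corollary of Theorem~\ref{thm:distribution-modified}. The only point needing care is the possible coincidence among $\lambda_1,\lambda_2,\lambda_3$, which happens exactly when $\alpha = 0$ (then $\lambda_1 = \lambda_2$), when $\gamma = 0$ (then $\lambda_2 = \lambda_3$), or on the line $\alpha = \gamma = 0$: there one must merge the coinciding eigenprojectors before invoking the theorem. Because $(D_c\mu)^{\boxplus k_1}\boxplus(D_c\mu)^{\boxplus k_2} = (D_c\mu)^{\boxplus(k_1+k_2)}$ and ranks add under merging (with the conventions $D_0\nu = \delta_0$ and $\mu^{\boxplus 0} = \delta_0$ when a coefficient vanishes), the displayed formula stays valid verbatim in all these cases. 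No positivity assumption on $C_\varphi$ enters, since Theorem~\ref{thm:distribution-modified} applies to arbitrary self-adjoint Choi matrices; this is what makes the result uniform in the signs of $\alpha,\beta,\gamma$.
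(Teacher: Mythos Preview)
Your proof is correct and follows exactly the approach the paper intends: the paper itself records the eigenvalues/eigenprojectors of $C_\varphi$, observes that the partial traces of the $P_t$ are multiples of $I_n$ so that (UC) holds, and then leaves the specialization of Theorem~\ref{thm:distribution-modified} to the reader. Your additional discussion of the degenerate cases (coinciding eigenvalues) is a welcome clarification the paper omits; note only that $\lambda_1=\lambda_3$ can also occur (when $n\alpha=-2\gamma$), but your merging argument covers this too.
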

\begin{corollary}
Let $W_d \in M_d(\mathbb C) \otimes M_n(\mathbb C)$ be a sequence of Wishart random matrices converging in distribution to a free Poisson distribution of parameter $\lambda$, $\pi_\lambda$. Then, the block modified random matrices $W_d^{\varphi} =  [\mathrm{id} \otimes \varphi](W_d)$ converge in distribution to a compound free Poisson distribution $ \pi_{\mu}$, where
$$\mu = \lambda \left[ \delta_{\alpha + (\beta + \gamma)/n} + \left( \frac{n(n+1)}{2}-1\right)\delta_{(\beta + \gamma)/n} + \frac{n(n-1)}{2} \delta_{(\beta - \gamma)/n} \right] .$$
\end{corollary}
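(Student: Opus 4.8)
The plan is to recognise the hypothesis as a special case of Proposition~\ref{prop:Wishart} and then substitute the spectral data of $C_\varphi$ recorded just above Proposition~\ref{prop:partial-generalization}. First I would observe that the free Poisson law $\pi_\lambda$ is nothing but the compound free Poisson distribution $\pi_\mu$ attached to the atomic Lévy measure $\mu=\lambda\delta_1$: its free cumulants are all equal to $\lambda$, and these coincide with the moments $m_p(\lambda\delta_1)=\lambda$ for $p\geq1$. Hence the matrices $W_d$ form a compound Wishart ensemble in the sense of Proposition~\ref{prop:Wishart}, and it only remains to verify the hypothesis of that proposition, namely that the Choi matrix $C_\varphi=\alpha\Omega_n+\beta I_{n^2}+\gamma F$ satisfies the unitarity condition (UC). This was already noted before Proposition~\ref{prop:partial-generalization}: the partial traces of $\Omega_n$, $I_{n^2}$ and $F$ are multiples of $I_n$, hence so are the partial traces of the eigenprojectors $P_1=n^{-1}\Omega_n$, $P_2=(I+F)/2-n^{-1}\Omega_n$, $P_3=(I-F)/2$, which is exactly (UC).

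Next I would read off, from the spectral decomposition displayed before Proposition~\ref{prop:partial-generalization}, the distinct eigenvalues $\rho_1=n\alpha+\beta+\gamma$, $\rho_2=\beta+\gamma$, $\rho_3=\beta-\gamma$ of $C_\varphi$, together with the ranks $\mathrm{rk} P_1=1$, $\mathrm{rk} P_2=n(n+1)/2-1$, $\mathrm{rk} P_3=n(n-1)/2$ of the corresponding eigenprojectors; recall from Proposition~\ref{prop:UC} that the multiplicities entering Proposition~\ref{prop:Wishart} are $nd_i=\mathrm{rk} P_i$. Applying the ``in particular'' part of Proposition~\ref{prop:Wishart} with $\mu=\lambda\delta_1$, the modified matrices $W_d^\varphi$ converge in distribution to the compound free Poisson law $\pi_{\mu^\varphi}$ with
$$\mu^\varphi=\lambda\sum_{i=1}^{3}(\mathrm{rk} P_i)\,D_{\rho_i/n}[\delta_1]=\lambda\sum_{i=1}^{3}(\mathrm{rk} P_i)\,\delta_{\rho_i/n},$$
and substituting $\rho_1/n=\alpha+(\beta+\gamma)/n$, $\rho_2/n=(\beta+\gamma)/n$, $\rho_3/n=(\beta-\gamma)/n$ and the ranks above produces exactly the asserted measure $\mu$.

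An alternative, essentially equivalent route is to apply Proposition~\ref{prop:partial-generalization} directly to $\mu=\pi_\lambda$, which expresses $\mu^\varphi$ as a free additive convolution of three dilated free additive powers of $\pi_\lambda$, and then to note that each summand $D_c[\pi_\lambda^{\boxplus k}]$ is itself a compound free Poisson law (its free cumulants are $kc^{p}\lambda$ for $p\geq1$, i.e.\ the moments of $k\lambda\,\delta_c$) and that the compound free Poisson class is stable under $\boxplus$, the Lévy measures simply adding; this yields the same $\mu$.

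I do not anticipate any genuine obstacle: once Proposition~\ref{prop:Wishart} (equivalently Theorem~\ref{thm:distribution-modified} together with Proposition~\ref{prop:partial-generalization}) is in hand, the argument is a bookkeeping substitution. The only points demanding a little care are the normalisation $d_t=n^{-1}\mathrm{rk} P_t$, so that the multiplicities appearing in $\mu^\varphi$ are the genuine ranks of the eigenprojectors, the trivial identity $D_c[\lambda\delta_1]=\lambda\delta_c$, and the check that $C_\varphi$ meets (UC), which reduces to the partial traces of $\Omega_n$, $I_{n^2}$ and $F$ being scalar multiples of $I_n$.
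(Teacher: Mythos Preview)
Your proposal is correct and follows exactly the route the paper intends: the authors explicitly leave the proof to the reader, and the natural argument is precisely to feed the spectral data of $C_\varphi$ (already recorded before Proposition~\ref{prop:partial-generalization}) into Proposition~\ref{prop:Wishart} with $\mu=\lambda\delta_1$. Your alternative via Proposition~\ref{prop:partial-generalization} and stability of compound free Poisson laws under $\boxplus$ is equally valid and amounts to the same computation.
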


The results above generalize the ones from Sections \ref{sec:partial-transposition} and \ref{sec:partial-reduction}, with the choice of parameters $\alpha = \beta = 0$, $\gamma =1$, resp.~ $\alpha = -1$, $\beta =1$, $\gamma = 0$.

\subsection{Mixtures of orthogonal conjugations}

Inspired by Weyl-covariant quantum channels (see, e.g.~ \cite{DFH06}), we consider maps $\varphi : M_n(\mathbb C) \to M_n(\mathbb C)$ of the form
\begin{equation}\label{eq:Weyl-covariant}
\varphi (A) = \sum_{i=1}^{n^2} \alpha_i U_i A U_i^*,
\end{equation}
where $\alpha_i \in \mathbb R$ and $U_i \in \mathcal U_n$ are \emph{orthogonal} unitary operators:
$$\mathrm{Tr}(U_iU_j^*) = n\delta_{ij}.$$
The Weyl-covariant channels from \cite[Section 4]{DFH06} are particular cases of the situation above, where $\{\alpha_i\}$ is a probability vector and the $U_i$ are the Weyl operators
$$W_{(x,y)} = U^xV^y$$
for $x,y \in \mathbb Z_d$ and $U,V$ being respectively the shift and phase operators
$$U E_x  = E_{x+1} \qquad \text{ and } \qquad V E_x = \exp\left(\frac{2\pi i x}{n} \right) E_x.$$

Maps $\varphi$ as in \eqref{eq:Weyl-covariant} are interesting because of the special form of their Choi matrix. Indeed, the Choi matrix reads
$$C_\varphi = \sum_{i=1}^{n^2} n\alpha_i u_iu_i^*,$$
where the $u_i$ are \emph{orthonormal} vectors in $\mathbb C^n \otimes \mathbb C^n$ given by
$$u_i = \frac{1}{\sqrt n} \sum_{s,t=1}^n U_i(s,t) E_s \otimes E_t.$$
Hence, the eigenvalues of $C_\varphi$ are $\{n \alpha_i\}$, they are simple, and the partial traces of the corresponding eigenprojectors are all equal to $n^{-1}I_n$. Thus, the Choi matrix $C_\varphi$ satisfies the unitarity condition (UC). We leave the proofs of the following results to the reader.

\begin{proposition}\label{prop:Weyl-covariant}
Consider a sequence of random matrices $X_d \in M_d(\mathbb C) \otimes M_n(\mathbb C)$ converging in distribution to ta probability measure $\mu$. Then, for the application $\varphi$ from \eqref{eq:Weyl-covariant}, the sequence of the block modified matrices $X_d^{\varphi}:= [\mathrm{id} \otimes \varphi](X_d) \in M_d(\mathbb C) \otimes M_n(\mathbb C)$ converges in distribution to the probability measure $\mu^{\varphi}$ given by
\begin{equation}
\mu^{\varphi} = \boxplus_{i=1}^{n^2}  D_{\alpha_i}[\mu].
\end{equation}
\end{proposition}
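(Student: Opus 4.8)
The plan is to reduce Proposition \ref{prop:Weyl-covariant} to Theorem \ref{thm:distribution-modified}, whose hypothesis (the unitarity condition (UC) from Definition \ref{def:UC}) has already been verified for the Choi matrix $C_\varphi$ in the paragraph preceding the statement. First I would record the spectral data of $C_\varphi = \sum_{i=1}^{n^2} n\alpha_i u_iu_i^*$: since the $u_i$ are orthonormal (this is where the orthogonality relation $\mathrm{Tr}(U_iU_j^*) = n\delta_{ij}$ enters, via a short Hilbert-Schmidt computation showing $\langle u_i, u_j\rangle = n^{-1}\mathrm{Tr}(U_i^*U_j) = \delta_{ij}$), the distinct eigenvalues of $C_\varphi$ are the distinct values among $\{n\alpha_i\}_{i=1}^{n^2}$, each eigenprojector being a sum of the rank-one projections $u_iu_i^*$ over the indices with that common value. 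The key numerical inputs for Theorem \ref{thm:distribution-modified} are the pairs $(\rho_i, nd_i)$, the eigenvalue and the rank of its eigenprojector. If the $\alpha_i$ are pairwise distinct, then $\rho_i = n\alpha_i$ and $d_i = 1/n$ (the eigenprojector $u_iu_i^*$ has rank one, so $nd_i = 1$, consistently with $d_t = n^{-1}\mathrm{rk}\,P_t$ from Proposition \ref{prop:UC}).

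Next I would substitute this data into the conclusion \eqref{eq:mu-x-phi} of Theorem \ref{thm:distribution-modified}, namely $\mu^\varphi = \boxplus_i (D_{\rho_i/n}\mu)^{\boxplus nd_i}$. With $\rho_i/n = \alpha_i$ and $nd_i = 1$ for each simple eigenvalue, this collapses immediately to $\mu^\varphi = \boxplus_{i=1}^{n^2} D_{\alpha_i}[\mu]$, which is exactly the claimed formula. One should handle the degenerate case where some of the $\alpha_i$ coincide: if a value $\alpha$ is repeated with multiplicity $p$, Theorem \ref{thm:distribution-modified} contributes $(D_\alpha\mu)^{\boxplus p}$, but grouping the same $p$ repeated factors $D_\alpha[\mu]$ in the product $\boxplus_{i=1}^{n^2} D_{\alpha_i}[\mu]$ yields the identical term, so the stated formula is correct without any case distinction — the indexing over $i = 1, \ldots, n^2$ already accounts for multiplicities.

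Alternatively, and perhaps more transparently, one can argue at the level of $R$-transforms using \eqref{eq:R-transf-x-phi}: $R_{x^\varphi}(z) = \sum_i d_i\rho_i R_x[(\rho_i/n)z] = \sum_{i=1}^{n^2} \alpha_i R_x[\alpha_i z]$, and since $R_{D_c\mu}(z) = c\,R_\mu(cz)$ and the $R$-transform linearizes free additive convolution, the right-hand side is precisely $R_{\boxplus_i D_{\alpha_i}\mu}(z)$; Stieltjes inversion then gives the measures. The main (and essentially only) obstacle is the bookkeeping verification that the eigenprojectors of $C_\varphi$ are correctly identified and that their ranks plug into the formula as claimed — in particular confirming orthonormality of the $u_i$ from the hypothesis on the $U_i$ — but this is the computation already sketched in the text just before the proposition, so the proof is genuinely a one-line application of Theorem \ref{thm:distribution-modified}. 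This is why the paper says ``We leave the proofs of the following results to the reader.''
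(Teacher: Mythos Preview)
Your proposal is correct and follows exactly the approach the paper intends: verify that $C_\varphi$ satisfies (UC) via the orthonormality of the $u_i$ (done in the paragraph preceding the proposition), then plug the spectral data $\rho_i = n\alpha_i$, $nd_i = 1$ into Theorem \ref{thm:distribution-modified}. Your handling of the case of repeated $\alpha_i$ is a useful clarification, since the paper's text assumes the eigenvalues are simple, but as you note the formula $\boxplus_{i=1}^{n^2} D_{\alpha_i}[\mu]$ remains valid either way.
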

\begin{corollary}
Let $W_d \in M_d(\mathbb C) \otimes M_n(\mathbb C)$ be a sequence of Wishart random matrices converging in distribution to a free Poisson distribution of parameter $\lambda$, $\pi_\lambda$. Then, the block modified random matrices $W_d^{\varphi} =  [\mathrm{id} \otimes \varphi](W_d)$ converge in distribution to a compound free Poisson distribution $ \pi_{\mu}$, where
$$\mu = \lambda \sum_{i=1}^{n^2} \delta_{\alpha_i} .$$
\end{corollary}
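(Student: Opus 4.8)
The plan is to deduce this corollary from Proposition~\ref{prop:Weyl-covariant}, combined with the characterization of the compound free Poisson family by its free cumulants; equivalently, the statement can be read off directly from Proposition~\ref{prop:Wishart}. The only point that genuinely needs to be spelled out is the matching of parameters.

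First I would observe that the free Poisson law $\pi_\lambda$ of parameter $\lambda$ is itself a compound free Poisson distribution, namely $\pi_\lambda = \pi_{\lambda\delta_1}$, because all of its free cumulants equal $\lambda = \int_{\mathbb R} x^k\,d(\lambda\delta_1)(x)$. Since a suitable sequence of Wishart matrices converges in distribution to $\pi_\lambda$, the hypotheses of Proposition~\ref{prop:Weyl-covariant} are satisfied. Next I would recall that the discussion preceding that proposition already verifies the unitarity condition (UC) for $C_\varphi = \sum_{i=1}^{n^2} n\alpha_i u_i u_i^*$: its eigenvalues are the simple values $\rho_i = n\alpha_i$, the eigenprojectors $u_iu_i^*$ have rank one, and their partial traces are all equal to $n^{-1}I_n$; hence $d_i = n^{-1}\mathrm{rk}\,P_i = 1/n$ in the notation of Definition~\ref{def:UC}. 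Proposition~\ref{prop:Weyl-covariant} then gives $\mu^\varphi = \boxplus_{i=1}^{n^2} D_{\alpha_i}[\pi_\lambda]$.

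Finally I would compute the free cumulants of $\mu^\varphi$. Using additivity of free cumulants under $\boxplus$ and the dilation rule $R_k(D_c\nu) = c^k R_k(\nu)$, one obtains, for every $k\ge 1$,
\begin{equation*}
R_k(\mu^\varphi) = \sum_{i=1}^{n^2} \alpha_i^k\, R_k(\pi_\lambda) = \lambda\sum_{i=1}^{n^2}\alpha_i^k = \int_{\mathbb R} x^k\,d\mu(x), \qquad \mu := \lambda\sum_{i=1}^{n^2}\delta_{\alpha_i}.
\end{equation*}
By the defining property of compound free Poisson measures recalled in Section~\ref{sec:random-matrices}, this identifies $\mu^\varphi$ with $\pi_\mu$, which is exactly the claim. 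Alternatively, plugging $s = n^2$, $nd_i = 1$ and $\rho_i/n = \alpha_i$ into the conclusion of Proposition~\ref{prop:Wishart} yields $\mu^\varphi = \lambda\sum_{i=1}^{n^2}\delta_{\alpha_i}$ at once.

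There is no real obstacle here: every step is bookkeeping. The one thing to be careful about is the passage between the ``rate measure'' description of a compound free Poisson law and its free-cumulant description, together with keeping track of the factors of $n$ hidden in the (UC) parameters $\rho_i$ and $d_i$; once $\pi_\lambda = \pi_{\lambda\delta_1}$ is in place and these parameters are read off correctly, the conclusion is immediate.
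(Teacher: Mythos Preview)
Your proposal is correct and follows exactly the path the paper intends: the corollary is stated immediately after Proposition~\ref{prop:Weyl-covariant} with the proofs explicitly left to the reader, and the expected argument is precisely to specialize either Proposition~\ref{prop:Weyl-covariant} (together with the free-cumulant characterization of compound free Poisson laws) or Proposition~\ref{prop:Wishart} to the eigendata $\rho_i = n\alpha_i$, $nd_i = 1$ computed just above. Your bookkeeping of the parameters is accurate, and the remark that $\pi_\lambda = \pi_{\lambda\delta_1}$ is the right way to feed $\pi_\lambda$ into Proposition~\ref{prop:Wishart}.
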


In the case where the coefficients $\alpha_i$ are all equal to 1, we obtain the map $\varphi(A) = \mathrm{Tr}(A)I_n$, thus recovering the special case $\alpha = \gamma = 0$, $\beta = 1$ from Proposition \ref{prop:partial-generalization}.

\bigskip

\noindent \textit{Acknowledgments.} We would like to thank Teo Banica and Roland Speicher for useful discussions. I.N.'s research has been supported by a von Humboldt fellowship and by the ANR projects {OSQPI} {2011 BS01 008 01},  {RMTQIT}  {ANR-12-IS01-0001-01}, and {StoQ} {ANR-14-CE25-0003-01}. The authors acknowledge the support received from a Procope grant, which allowed them to meet in Saarbr\"ucken and in Toulouse on several occasions. O.A.~was supported by CONACYT GRANT No. 22668.

\bibliography{Blt}

\providecommand{\bysame}{\leavevmode\hbox to3em{\hrulefill}\thinspace}
\providecommand{\MR}{\relax\ifhmode\unskip\space\fi MR }
\providecommand{\MRhref}[2]{%
  \href{http://www.ams.org/mathscinet-getitem?mr=#1}{#2}
}
\providecommand{\href}[2]{#2}
\begin{thebibliography}{BSTV14}

\bibitem[AGZ10]{AGZ10}
Greg Anderson, Alice Guionnet, and Ofer Zeituni, \emph{An {I}ntroduction to
  {R}andom {M}atrices}, Cambridge University Press, Cambridge, 2010.

\bibitem[Aub12]{Au12}
Guillaume Aubrun, \emph{Partial transposition of random states and non-centered
  semicircular distributions.}, Random Matrices: Theory and Applications
  \textbf{01} (2012).

\bibitem[BB07]{BeBe07}
Serban Belinschi and Hari Bercovici, \emph{A new approach to subordination
  results in free probability}, Journal d'Analyse Mathematique \textbf{101}
  (2007), 357--365.

\bibitem[BG07]{benaych2007infinitely}
Florent Benaych-Georges, \emph{Infinitely divisible distributions for
  rectangular free convolution: classification and matricial interpretation},
  Probability Theory and Related Fields \textbf{139} (2007), no.~1-2, 143--189.

\bibitem[BG09a]{BG09}
Florent Benaych-Georges, \emph{Rectangular random matrices, related
  convolution}, Prob. Theory Rel. Field \textbf{144} (2009), 471--515.

\bibitem[BG09b]{BG09b}
\bysame, \emph{Rectangular random matrices, related free entropy and free
  {F}isher's information}, J. Operator Th. \textbf{62} (2009), no.~2, 371--419.

\bibitem[Bia98]{Bi98}
Philippe Biane, \emph{Processes with free increments}, Math. Z. \textbf{227}
  (1998), 143--174.

\bibitem[BMS13]{BMS13}
Serban Belinschi, Tobias Mai, and Roland Speicher, \emph{Analytic subordination
  theory of operator-valued free additive convolution and the solution of a
  general random matrix problem (pre-print)}, arXiv:1303.3196 (2013).

\bibitem[BN12a]{BaNe12}
Teodor Banica and Ion Nechita, \emph{Asymptotic eigenvalue distributions of
  block-transposed wishart matrices}, J. of Theoretical Probability \textbf{26}
  (2012), 855--869.

\bibitem[BN12b]{BaNe12b}
\bysame, \emph{Block-modified {W}ishart matrices and free {P}oisson laws},
  Houston J. Math. (2012).

\bibitem[BSTV14]{BSTV14}
Serban Belinschi, Roland Speicher, John Treilhard, and Carlos Vargas,
  \emph{Operator-valued free multiplicative convolution: analytic subordination
  theory and applications to random matrix theory}, Internat. Math. Res.
  Notices (2014).

\bibitem[CAG99]{CAG99}
NJ~Cerf, C~Adami, and RM~Gingrich, \emph{Reduction criterion for separability},
  Physical Review A \textbf{60} (1999), no.~2, 898.

\bibitem[CHN15]{CoHaNe15}
Benoit Collins, Patrick Hayden, and Ion Nechita, \emph{Random and free positive
  maps with applications to entanglement detection}, arXiv:1505.08042 (2015).

\bibitem[Cho75]{choi1975completely}
Man-Duen Choi, \emph{Completely positive linear maps on complex matrices},
  Linear Algebra and its Applications \textbf{10} (1975), no.~3, 285--290.

\bibitem[DFH06]{DFH06}
N~Datta, M~Fukuda, and AS~Holevo, \emph{Complementarity and additivity for
  covariant channels}, Quantum Information Processing \textbf{5} (2006), no.~3,
  179--207.

\bibitem[Dyk06]{Dy06}
Kenneth Dykema, \emph{On the ${S}$-transform over a {B}anach algebra}, J.
  Funct. Anal. \textbf{231} (2006), no.~1, 90--110.

\bibitem[FN15]{FuNe15}
Motohisa Fukuda and Ion Nechita, \emph{Additivity rates and ppt property for
  random quantum channels}, to appear in Ann. Math. Blaise Pascal (2015).

\bibitem[F{\'S}13]{fukuda2013partial}
Motohisa Fukuda and Piotr {\'S}niady, \emph{Partial transpose of random quantum
  states: Exact formulas and meanders}, Journal of Mathematical Physics
  \textbf{54} (2013), no.~4, 042202.

\bibitem[HHH96]{horodecki1996separability}
Micha{\l} Horodecki, Pawe{\l} Horodecki, and Ryszard Horodecki,
  \emph{Separability of mixed states: necessary and sufficient conditions},
  Physics Letters A \textbf{223} (1996), no.~1, 1--8.

\bibitem[HP00]{HiPe00}
Fumio Hiai and Dennes Petz, \emph{The semicircle law, free random variables,
  and entropy}, Mathematical surveys and monographs, vol.~77, American
  Mathematical Society, 2000.

\bibitem[JLN14]{JLN14}
Maria~Anastasia Jivulescu, Nicolae Lupa, and Ion Nechita, \emph{On the
  reduction criterion for random quantum states}, Journal of Mathematical
  Physics \textbf{55} (2014), no.~11, 112203.

\bibitem[JLN15]{JLN15}
\bysame, \emph{Thresholds for entanglement criteria in quantum information
  theory}, arXiv:1503.08008 (2015).

\bibitem[NS06]{NiSp06}
Alexandru Nica and Roland Speicher, \emph{Lectures on the combinatorics of free
  probability}, London Mathematical Society Lecture Note Series, vol. 335,
  Cambridge University Press, Cambridge, 2006.

\bibitem[NSS02]{NSS02}
Alexandru Nica, Dimitri Shlyakhtenko, and Roland Speicher,
  \emph{Operator-valued distributions {I}: {C}haracterizations of freeness},
  Internat. Math. Res. Notices \textbf{29} (2002), 1509--1538.

\bibitem[Shl96]{Sh96}
Dimitri Shlyakhtenko, \emph{Random {G}aussian band matrices and freeness with
  amalgamation}, Internat. Math. Res. Notices \textbf{20} (1996), 1013--1025.

\bibitem[Spe98]{Sp98}
Roland Speicher, \emph{Combinatorial theory of the free product with
  amalgamation and operator-valued free probability theory}, Memoirs of the
  American Math. Society, vol. 132, 1998.

\bibitem[Voi85]{Vo85}
Dan Voiculescu, \emph{Symmetries of some reduced free product
  {${C}^\ast$}-algebras}, Operator algebras and their connections with topology
  and ergodic theory ({B}u\c steni, 1983), Lecture Notes in Math., vol. 1132,
  Springer, Berlin, 1985, pp.~556--588.

\bibitem[Voi87]{Vo87}
\bysame, \emph{Multiplication of certain noncommuting random variables}, J.
  Operator Theory \textbf{18} (1987), 223--235.

\bibitem[Voi91]{Vo91}
\bysame, \emph{Limit laws for random matrices and free products}, Invent. Math.
  \textbf{104} (1991), 201--220.

\bibitem[Voi95]{Vo95}
\bysame, \emph{Operations on certain non-commutative operator-valued random
  variables. {R}ecent advances in operator algebras (orleans, 1992)},
  Asterisque \textbf{232} (1995).

\bibitem[Voi98]{Vo98}
\bysame, \emph{A strengthened asymptotic freeness result for random matrices
  with applications to free entropy}, Internat. Math. Res. Notices (1998),
  41--73.

\bibitem[Voi02]{Vo02}
\bysame, \emph{Analytic subordination consequences of free {M}arkovianity},
  Indiana Univ. Math. J. \textbf{51} (2002), 1161--1166.

\end{thebibliography}
\bibliographystyle{amsalpha}

\end{document}